\documentclass[11pt]{article}
\usepackage[shortalphabetic,initials]{amsrefs}

\usepackage{mathtools}
\usepackage{amsmath, amsthm, amssymb}
\usepackage{mathrsfs}
\usepackage[colorlinks,allcolors=blue]{hyperref}
\usepackage[all,cmtip]{xy}
\usepackage{tikz}
\usepackage{tikz-cd}
\usepackage{float}
\usepackage[shortlabels]{enumitem}
\usepackage{multicol}
\usetikzlibrary{arrows}
\usetikzlibrary{shapes.geometric}
\usetikzlibrary{decorations.markings}
\usetikzlibrary{decorations.pathreplacing}
\usetikzlibrary{hobby}

\usepackage{pgfmath}

\usepackage{ulem}
\usepackage{comment}
\usepackage{diagbox}

\let\amsamp=&

\theoremstyle{plain}

\newtheorem{thm}{Theorem}

\newtheorem{prop}[thm]{Proposition}

\theoremstyle{definition}

\newtheorem{definition}[thm]{Definition}
\newtheorem{example}[thm]{Example}

\newtheorem{rmk}[thm]{Remark}

\let\oldmarginpar\marginpar
\renewcommand\marginpar[1]{\-\oldmarginpar[\raggedleft\footnotesize #1]%
	        {\raggedright\footnotesize #1}}

\newskip\stdskip                      
\newcommand{\Alg}{\mathbf{Alg}}

\newcommand{\eightcurve}{%
\begin{tikzpicture}[baseline=-0.6ex,scale=0.25]
  \draw[thick] (0,0) .. controls (1,1) and (2,1) .. (2,0)
                     .. controls (2,-1) and (1,-1) .. (0,0)
                     .. controls (-1,1) and (-2,1) .. (-2,0)
                     .. controls (-2,-1) and (-1,-1) .. (0,0);
\end{tikzpicture}%
}
\newcommand{\tripleeye}{
\begin{tikzpicture}[baseline=-0.6ex,scale=0.25]
  \draw[thick] (0,0) .. controls (0,1) and (1,1) .. (2,0)
                     .. controls (2.5,-1) and (3.5,-1) .. (4,0)
                     .. controls (5,1) and (6,1) .. (6,0)
                     .. controls (6,-1) and (5,-1) .. (4,0)
                     .. controls (3.5,1) and (2.5,1) .. (2,0)
                     .. controls (1,-1) and (0,-1) .. (0,0);
\end{tikzpicture}
}

\newcommand{\eightinner}{%
\begin{tikzpicture}[baseline=-0.6ex, scale=0.35]
  \draw[thick] (0.5,0) .. controls (1,0.8) and (2,0.8) .. (2,0)
                       .. controls (2,-0.8) and (1,-0.8) .. (0.5,0)
                       .. controls (0,1) and (-2,1) .. (-1.5,0)       
                       .. controls (-1,-0.5) and (-0.5,-0.5) .. (-0.5,0)    
                       .. controls (-0.5,0.5) and (-1,0.5) .. (-1.5,0) 
                       .. controls (-2,-1) and (0,-1) .. (0.5,0); 
  \end{tikzpicture}%
}

\newcommand{\eears}{
\begin{tikzpicture}[baseline=1.5ex,scale=0.15]
  \draw[thick] (0.6,0.6) .. controls (-0.6,0.4) and (-1,2) .. (2,2)
                     .. controls (3,3) .. (4,4)
                     .. controls (3,5) and (3,6) .. (4,6)
                     .. controls  (5,6) and (5,5) .. (4,4)
                     .. controls (5,3) .. (6,2)
                     .. controls (6.7, 1.3) .. (7.4, 0.6)
                     .. controls (8.6, 0.4) and (9,2)  .. (6,2)
                     .. controls (4,2) .. (2,2)
                     .. controls (1.3,1.3) .. (0.6,0.6);                     
\end{tikzpicture}
}

\newcommand{\trefoil}{
\begin{tikzpicture} [baseline=-0.6ex,scale=0.15]
    \draw[thick, samples=200, smooth, domain=0:360] 
        plot ({sin(\x) + 2*sin(2*\x)}, {cos(\x) - 2*cos(2*\x)}) -- cycle;
\end{tikzpicture}
}

\newcommand{\quadeye}{
\begin{tikzpicture}[baseline=-0.6ex,scale=0.25]
  \draw[thick] (0,0) .. controls (0,1) and (1,1) .. (2,0)
                     .. controls (2.5,-1) and (3.5,-1) .. (4,0)
                     .. controls (4.5,-1) and (5.5,-1) .. (6,0)
                     .. controls (7,1) and (8,1) .. (8,0)
                     .. controls (8,-1) and (7,-1) .. (6,0)
                     .. controls (5.5,1) and (4.5,1) .. (4,0)
                     .. controls (3.5,1) and (2.5,1) .. (2,0)
                     .. controls (1,-1) and (0,-1) .. (0,0);
\end{tikzpicture}
}

\newcommand{\toric}{
\begin{tikzpicture}[baseline=-0.6ex, scale=0.35]
  \draw[thick] (0.5,0) .. controls (1,1) and (3,1) .. (2.5,0)
                       .. controls (2,-0.5) and (1.5,-0.5) .. (1.5,0)
                       .. controls (1.5,0.5) and (2,0.5) .. (2.5,0) 
                       .. controls (3,-1) and (1,-1) .. (0.5,0)           
                       .. controls (0,1) and (-2,1) .. (-1.5,0)      
                       .. controls (-1,-0.5) and (-0.5,-0.5) .. (-0.5,0)   
                       .. controls (-0.5,0.5) and (-1,0.5) .. (-1.5,0) 
                       .. controls (-2,-1) and (0,-1) .. (0.5,0); 
\end{tikzpicture}
}

\newcommand{\stabsym} {
\begin{tikzpicture}[baseline=-0.6ex, scale=0.35]
    \draw[thick] (0.5,0) .. controls (1,1) and (2,1) .. (2.5, 0)
                         .. controls (3,0.8) and (4,0.8) .. (4,0)
                         .. controls (4,-0.8) and (3,-0.8) .. (2.5,0)
                         .. controls (2, -1) and (1, -1) .. (0.5,0)
                         .. controls (0,1) and (-2,1) .. (-1.5,0)       
                         .. controls (-1,-0.5) and (-0.5,-0.5) .. (-0.5,0)  
                         .. controls (-0.5,0.5) and (-1,0.5) .. (-1.5,0)
                         .. controls (-2,-1) and (0,-1) .. (0.5,0);
\end{tikzpicture}
}

\newcommand{\doubleear}{
\begin{tikzpicture}[baseline=0.3ex, scale=0.35]
  \draw[thick] (0.5,0.5) .. controls (0.8,1.3) and (2,1.3) .. (2,0.5)
                       .. controls (2,-0.3) and (0.8,-0.3) .. (0.5,0.5)
                       .. controls (0,2) and (-2,2) .. (-1.5,1)       
                       .. controls (-1,0.5) and (-0.5,0.5) .. (-0.5,1)
                       .. controls (-0.5,1.5) and (-1,1.5) .. (-1.5,1)
                       .. controls (-1.7,0.5) .. (-1.5, 0)
                       .. controls (-1,-0.5) and (-0.5,-0.5) .. (-0.5,0)
                       .. controls (-0.5,0.5) and (-1,0.5) .. (-1.5,0)
                       .. controls (-2,-1) and (0,-1) .. (0.5,0.5); 
\end{tikzpicture}
}

\title{Curves on surfaces and moduli of associative algebras}

\author{Yank\i\ Lekili\thanks{The author thanks Jonny Evans, Martin Kalck, Daniil Mamaev and Richard Thomas for their interest, and acknowledges support by the EPSRC grant EP/W015889/1.}}

\date{}

\begin{document}
\maketitle
\begin{abstract}
  Given an immersion of a circle in a punctured surface $\Sigma$, we give an explicit (and finite) computation of the $A_\infty$-algebra associated with this curve when viewed as an object in a (relative) Fukaya category of $\Sigma$ in terms of the signed Gauss word recording the double points in a traversal of the curve and the visible polygons that it bounds in $\Sigma$. We illustrate our computational technique by fully determining the $A_\infty$-products for immersions with up to three self-intersections. In particular, it is proved that, over an algebraically closed field, all associative algebras of dimension $\leq 4$, with one exception, can be realized as the (degree 0) endomorphism algebra of some Lagrangian immersion of a circle equipped with a bounding cochain computed in some relative Fukaya category $\mathcal{F}(\Sigma,D)$. We also note that any finite-dimensional algebra with radical square zero arises as the (degree 0) endomorphism algebra of an object in the Fukaya category $\mathcal{F}(\Sigma)$ of some punctured surface $\Sigma$.

\end{abstract}

Which finite-dimensional (ungraded) algebras over an algebraically closed field $k$ can be realized as the endomorphism algebra $\mathrm{End}(L)$, in degree 0, of a {\it compact} Lagrangian $L$ (possibly equipped with a bounding cochain $\mathfrak{b}$) in a relative Fukaya category $\mathcal{F}(\Sigma,D)$ of a {\it surface} $\Sigma$ where $D$ is a divisor such that $L \subset \Sigma \setminus D$?

In this paper, we show that all algebras of dimension $\leq 4$ can be realized with a unique exception. We show this by studying immersions of a circle with fewer than 4 self-intersections. Any immersed curve $\gamma: S^1 \to L \subset \Sigma$ determines a smallest surface $\Sigma_\gamma$, the tubular neighborhood of $L$, and the endomorphism algebra of $L$ in $\mathcal{F}(\Sigma_{\gamma})$ is the most degenerate unital algebra 
\[ A_\bullet \simeq k[x_1,\ldots, x_{r-1}]/ (x_1,\ldots, x_{r-1})^2 \]
of some rank $r = 1 + \# \text{(nodes of $L$)}$. We can deform this algebra in two ways:

(i) Partially compactifying the surface $\Sigma_\gamma$ and considering $L$ as an object of the relative Fukaya category $\mathcal{F}(\Sigma,D)$ where $\Sigma_{\gamma} = \Sigma\setminus D$,

(ii) Turning on deformations coming from bounding cochains $\mathfrak{b} \in \mathrm{Ext}^1(L,L)$.

In a Fukaya category, the algebra structure on $\mathrm{End}(L)$ is given by counting rigid holomorphic triangles bounding $L$. Geometrically, (i) means that we count rigid holomorphic triangles passing through $D$ (not just in $\Sigma \setminus D$), and (ii) means that we also count rigid holomorphic $(k+3)$-gons where $k$ of the corners map to the bounding cochain $\mathfrak{b}$, which is just a collection of self-intersection points on $L$.

We first classify the deformations given by (i) for low rank via case-by-case analysis. All local algebras of rank $r \leq 4$ are given in the following list\footnote{Throughout, $k[\ldots]$ is the free commutative algebra, and $k\{ \ldots \}$ is the free associative algebra.}:
\begin{align*}
r=2 &:  k[x]/x^2 \\
r=3 &: k[x,y]/(x,y)^2, k[x]/x^3 \\
r=4 &: k[x,y,z]/(x,y,z)^2, k[x]/x^4, k[x,y]/(x^3,xy,y^2), k\{x,y\}/(x^2,y^2,xy-qyx), \\ &k\{x,y\} /(x^2+yx,y^2,xy+yx)
\end{align*}  
The deformations from (i) allow us to realize all of these except the last one. Note that the algebras $A_q =k\{x,y\}/(x^2,y^2,xy-qyx)$ depend on a parameter $q \in k$ (moduli), and provide an infinite family of pairwise non-isomorphic $4$-dimensional algebras with the exception that $A_q \simeq A_{1/q}$ for $q \neq 0$. It is necessary to work with {\it relative} Fukaya categories in order to be able to realize algebras that have moduli.

In this paper, we work with $\mathbb{Z}$-graded relative Fukaya categories of surfaces. Hence, our surfaces are implicitly equipped with a line field $\eta$ (see Section 1 \cite{LPol}) and $L$ has winding number zero with respect to $\eta$. In considering relative Fukaya categories, we are only interested in compactifications where $\eta$ extends. We allow $\mathbb{Z}/2\mathbb{Z}$-orbifold compactifications since these are the only orbifold compactifications over which $\eta$ may extend.

Some non-local algebras (containing non-trivial idempotents) also arise from (i), but to realize all non-local algebras (see Section 1 for a list), we next turn on deformations of type (ii). We first give an explicit computation of deformations of type (ii) only, staying within $\Sigma_{\gamma}$, but for arbitrary rank $r$ and arbitrary bounding cochain $\mathfrak{b}$. To do this, we provide a complete calculation of the $A_{\infty}$-algebra \[ hom(L,L)\] computed in $\mathcal{F}(\Sigma_{\gamma})$. The result is given as a strictly unital minimal $A_\infty$-algebra with only finitely many products and $\mathfrak{m}_i \neq 0$ only for $i=2,3$ computed from a signed Gauss word describing the immersion (see Definition \ref{newalg}). These $A_\infty$ algebras are quite easy to manipulate combinatorially. We deduce easily that the endomorphism algebras of $(L,\mathfrak{b}) \in \mathcal{F}(\Sigma_\gamma)$ correspond exactly to finite-dimensional radical square zero algebras (see Proposition \ref{proprad2}).  

Next, we combine both (i) and (ii) to realize many finite-dimensional algebras. Generally, arbitrary combinations of such deformations will cause the rank of $\mathrm{End}(L)$ to drop. Requiring the rank to stay constant is the flatness condition. We study possible deformations of $A_\bullet$ coming from combinations of (i) and (ii), and impose the flatness condition to obtain various finite-dimensional algebras. On the other hand, as we explain in Section 1, the varieties of associative algebras, $\Alg_r$, of low rank $r$ have been studied classically. It is rather amusing to contemplate parts of mathematics that cast their shadows on the irreducible components of these varieties.

\begin{thm} Let $k$ be an algebraically closed field. For every $r$-dimensional algebra $A$ over $k$ with $r \leq 4$, except $k\{x,y\} /(x^2+yx,y^2,xy+yx)$, there exists a Lagrangian immersion $\gamma : S^1 \to L \subset \Sigma$ equipped with a bounding cochain $\mathfrak{b}$, giving an object of a relative Fukaya category $\mathcal{F}(\Sigma,D)$ for some surface $\Sigma$ and a divisor $D$ such that $\mathrm{End}(L,\mathfrak{b})$ is a flat family of $r$-dimensional algebras realizing a flat deformation of $A_\bullet$ to $A$. In particular, a general member of this family gives an object in a Fukaya category of a surface whose endomorphism algebra is isomorphic to $A$.
\end{thm}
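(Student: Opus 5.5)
The plan is a case-by-case realization over the classical list of algebras of dimension $r\le 4$ over an algebraically closed field. First, each algebra decomposes as $A \cong \prod_i \mathrm{End}$-type blocks. By Morita theory and the structure of basic algebras, the $r\le 4$ algebras split into local algebras (the list in the introduction), products of local algebras, and non-basic or non-local ones: $M_2(k)$, upper triangular $2\times 2$ matrices $T_2(k)$, path algebras of small quivers with relations, and so on. The list of non-local algebras recalled in Section 1 will serve as the checklist. For each entry I will exhibit an immersion $\gamma$ with $r-1$ double points, a partial compactification $(\Sigma,D)$ of $\Sigma_\gamma$ (allowing $\mathbb{Z}/2$-orbifold points where the line field extends), and a bounding cochain $\mathfrak{b}$. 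I will then compute $\mathrm{End}(L,\mathfrak{b})$ as a family in the parameters: the Novikov-type variables attached to components of $D$, together with the coefficients of $\mathfrak{b}$.

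The computational engine is the explicit finite $A_\infty$-structure on $hom(L,L)$ in $\mathcal{F}(\Sigma_\gamma)$ from Definition \ref{newalg}, with only $\mathfrak{m}_2,\mathfrak{m}_3$ nonzero and read off from the signed Gauss word and the visible polygons. Passing to $(\Sigma,D)$ adds further polygon contributions through $D$, weighted by the divisor parameters. For small words (at most three crossings) these are finite and can be enumerated from the complementary regions of the immersion: capping a region by a disc or orbifold disc creates new triangles and higher polygons. The deformed product is then
\[\mu^2_{\mathfrak{b}}(a,b)=\sum_{i,j}\mathfrak{m}_{2+i+j}(\mathfrak{b}^{\otimes i},a,\mathfrak{b}^{\otimes j},b).\]
We also need $\mathfrak{b}$ to solve the Maurer--Cartan equation, and the degree-0 cohomology must keep rank $r$; this is the flatness condition. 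Flatness amounts to the vanishing of $\mu^1_{\mathfrak{b}}$ on degree-0 and degree-1 generators, which I will check directly from the Gauss word.

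The work splits as follows. For local algebras, type (i) deformations alone suffice. $k[x]/x^2$ comes from the figure-eight curve $\eightcurve$. For $r=3$, use the two-crossing curves, capping an appropriate region to produce $x^2=y$ and so obtain $k[x]/x^3$. For $r=4$, use three-crossing curves such as $\trefoil$, $\tripleeye$ and $\toric$. Among these, $A_q$ requires two regions whose caps contribute the products $xy$ and $yx$ with independent weights, so that $q$ is a ratio of divisor parameters. This is exactly why relative categories are needed. For non-local algebras, turn on $\mathfrak{b}$ supported at double points: by Proposition \ref{proprad2}, type (ii) alone produces all radical-square-zero algebras, which covers $k^r$, $T_2(k)$-like algebras and the radical-square-zero quiver algebras. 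The remaining non-local algebras need combinations of (i) and (ii), where the idempotents come from $\mathfrak{b}$ and the nonzero $\mathrm{rad}^2$ comes from $D$. In each case, identify the general fibre with $A$ by producing explicit idempotents and radical generators and matching them against the classification.

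The main obstacle will be the combined (i)+(ii) cases, and especially $M_2(k)$ and the non-local algebras with $\mathrm{rad}^2\neq 0$. There one must solve the Maurer--Cartan equation in the relative category, where curvature $\mathfrak{m}_0$ from discs through $D$ can appear, and at the same time keep the rank of $\mathrm{End}$ from dropping. I would first try to choose $D$ so that the curvature is a multiple of the unit, which is harmless, and choose $\mathfrak{b}$ with all coefficients generic, then verify flatness by a direct rank computation. The excluded algebra $k\{x,y\}/(x^2+yx,y^2,xy+yx)$ need not be addressed, since the statement exempts it.
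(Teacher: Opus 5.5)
Your architecture matches the paper's. You run through immersions with at most three double points, compute $hom(L,L)$ from the signed Gauss word, cap complementary regions (smoothly or by $\mathbb{Z}/2\mathbb{Z}$-orbifold points), turn on $\mathfrak{b}$, and identify the fibres. Using Proposition~\ref{proprad2} to handle every basic radical-square-zero algebra at once is a legitimate economy. Besides the local algebras, it leaves only $M_2(k)$ and $k\times k[x]/(x^3)$.

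The theorem is an existence statement, though, and its proof \emph{is} the list of explicit realizations. Beyond $k[x]/(x^2)$, $k[x]/(x^3)$ and the correct idea that $q$ should be a ratio of two divisor weights, you give none. There is nothing for $k[x]/(x^4)$, $k[x,y]/(x^2,y^2)$, $k[x,y]/(x^3,xy,y^2)$, $k\times k[x]/(x^3)$ or $M_2(k)$. In the paper, the nine commutative algebras come from $1^+2^+3^+3^-2^-1^-$ with all three regions capped, and $A_q$ and $M_2(k)$ come from the trefoil. Also, the curve $1^+2^-2^+1^-$ that you list among three-crossing curves has only two double points.

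More seriously, the procedure you propose for the hard cases would fail, because flatness is not generic. The condition $\mathfrak{m}_1^{\mathfrak{b}}=0$ is a system of polynomial equations coupling the divisor weights and the coefficients of $\mathfrak{b}$. The real work is to decompose that locus and find a component whose general fibre is $A$.
\begin{itemize}
\item In the trefoil, cap the triangles $s_1,s_2$ and put orbifold caps on the bigons $s_3,s_4$. Already $\mathfrak{b}=0$ gives $\mathfrak{m}_1(\bar w_1)=(s_2^2-s_1^2)s_4w_1$, so even the weights must be tuned against each other.
\item The flatness equations there have four components. $M_2(k)$ appears on $t_1=t_3=0$, $s_2=s_1(\lambda-1)$, as the quaternion algebra $(s_4,s_3)_k$ when $t_2=0$ and $s_2s_3s_4\neq0$.
\item For $1^+2^+3^+1^-2^-3^-$ one needs $t_1t_2=t_2t_3=0$ and $s_1+t_1t_3+s_2t_2^2=0$.
\end{itemize}
So ``generic $\mathfrak{b}$, then check the rank'' produces a rank drop, and $D$ cannot be fixed before $\mathfrak{b}$.

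You also misplace the obstruction. With $|w_i|\in\{0,1\}$ there are no degree-$2$ generators, so $\mathfrak{m}_0=0$ for degree reasons. What capping creates is $\mathfrak{m}_1$, coming from bigons and from monogons or orbifold double covers. For example, capping the bigon of $1^+2^+1^-2^-$ gives $\mathfrak{m}_1(w_1)=-s_1\bar w_2$, which cancels only on $s_1+t_1t_2=0$.

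Your heuristic that the idempotents come from $\mathfrak{b}$ does not describe $M_2(k)$, which is not basic. In the trefoil one may take $\mathfrak{b}=0$ and $s_2=-s_1$, and the matrix units come entirely from $D$.

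Two smaller errors:
\begin{itemize}
\item Your formula for $\mu^2_{\mathfrak{b}}$ has only two insertion slots. One needs insertions of $\mathfrak{b}$ before, between \emph{and} after the two inputs. For the figure-eight, the term $\mathfrak{m}_3(w,w,t\bar w)=-tw$ is exactly what produces the $tx$ in $x^2+tx-s$.
\item Capping is not always finite for three double points. Capping $s_2$ in $1^+2^+1^-2^-$, or $s_3$ in $1^+2^+3^+1^-2^-3^-$, gives infinitely many polygons, so the choice of which regions to cap is part of the argument.
\end{itemize}
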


In fact, our computations are valid over an arbitrary coefficient ring $k$. For instance, any commutative ring that is free of rank 2 or 3 as a $\mathbb{Z}$-module and all quaternion algebras over $\mathbb{Q}$ are realized as endomorphism algebras. However, we have not systematically investigated the realization problem over non-closed fields. 

The algebra $J=k\{x,y\} / (x^2+yx,y^2,xy+yx)$ is the quadratic dual of the Jordan quantum plane $k\{u,v\}/ (uv-vu+u^2)$ with $|u|=|v|=1$. The latter can be realized as a twisted homogeneous co-ordinate ring on $\mathbb{P}^1$ (\cite{VdbStafford}). We also remark that $J$ is the limit of the family $J_q = k\{x,y\} / (x^2+yx, y^2, xy-qyx)$ as $q \to -1$. The change of basis $(x,y) \mapsto ((q+1)x+y, y)$ shows that $J_q$ is isomorphic to $A_q= k\{x,y\}/(x^2,y^2,xy-qyx)$ for $q \neq -1$ which, as we shall see, can be realized as the endomorphism algebra of a Lagrangian immersion in a relative Fukaya category of a surface. Our limited efforts did not lead to a construction of the limiting algebra $J$. So, we leave this as a question for the enthusiasts: Can the algebra $k\{x,y\}/(x^2+yx,y^2,xy+yx)$ be realized as an endomorphism algebra of an object in a Fukaya category of a {\it surface}? 

We end the introduction by displaying the champions of the realizability problem for rank 4. These two immersed curves equipped with various bounding cochains and put into various punctured surfaces cover almost all of the rank 4 algebras.

\begin{center}
  \texorpdfstring{\protect\trefoil}{} \qquad\qquad
  \texorpdfstring{\protect\toric}{}
\end{center}

\section{Moduli of associative algebras}

Let $k$ be a field. One would like to study the set of all non-isomorphic associative algebras over $k$. However, this set does not have a reasonable algebraic structure. To overcome this difficulty, we instead consider algebras with a fixed basis. Thus, let $V$ be an $r$-dimensional vector space over $k$ with a fixed basis $x_0,x_1,\ldots, x_{r-1}$. To make $V$ into an associative $k$-algebra with unit $x_0$, we need to give structure constants $c_{ij}^h \in k$ satisfying
\[ x_i x_j = \sum_{h=0}^{r-1} c_{ij}^h x_h \]
such that
\[ \sum_{h=0}^{r-1} (c_{ij}^h c_{hl}^m - c_{jl}^h c_{ih}^m) =0\]
for all $i,j, l,m$, and
\[ c_{0i}^j = c_{i0}^j = \begin{cases} 0, \quad i\neq j \\
   1, \quad i =j \end{cases} \]
We obtain a closed subvariety of $k^{r(r-1)^2}$, called the variety of unital associative algebras $\Alg_r$ which comes with a natural (left) action of the parabolic subgroup $G_r \subset GL_r(k)$ of invertible matrices whose first column is $(1,0,\ldots, 0)^t$, acting by a change of basis that preserves the unit $x_0$. The orbits of this action are in one-to-one correspondence with isomorphism classes of $r$-dimensional associative algebras with unit. We note that the dimension of $G_r$ is $r^2-r$, and the stabilizer of the orbit of an algebra $A$ is $\mathrm{Aut}_k(A)$, the group of $k$-algebra automorphisms of $A$.

The variety $\Alg_r$ is connected. In fact, any associative algebra structure on an $r$-dimensional vector space with unit may be degenerated to
\[ A_\bullet \cong k[x_1,\ldots, x_{r-1}]/ (x_1,\ldots, x_{r-1})^2 \]
To see this, given an arbitrary associative algebra with basis $1,x_1,\ldots, x_{r-1}$ and $t \in k^\times$, we can define a new basis $1, tx_1,\ldots, tx_{r-1}$. Then, the structure coefficients change as $c_{ij}^0 \to t^2 c_{ij}^0$, and $c_{ij}^h  \to t c_{ij}^h$ for $i,j,h \neq 0$.

If the $G_r$-orbit of an algebra $A_0$ is contained in the Zariski closure of the $G_r$-orbit of an algebra $A_1$, we say that $A_1$ is a deformation of $A_0$. In this way, one has a partial order on the set of isomorphism classes of $k$-algebras of dimension $r$.

\begin{example} \label{ex1} Every $2$-dimensional unital $k$-algebra $A$ can be written as $A = k\cdot 1 \oplus k\cdot x$ with multiplication determined by
\[
x^2 = s  + tx, \qquad s,t \in k .
\]
so the space of algebra structures is the affine plane $\Alg_2 \cong \mathbb A_k^2$ with coordinates $(s,t)$. Change of basis preserving the unit is given by $x \mapsto x' = \alpha + \beta x$ for $\beta \neq 0$, corresponding to the subgroup
\[
G =
\left\{
\begin{pmatrix}
1 & \alpha \\
0 & \beta
\end{pmatrix}
: \beta \in k^\times
\right\}
\subset GL_2(k).
\]
Under this transformation the parameters change as
\[
\begin{aligned}
t' &= 2\alpha + \beta t, \\
s' &= \beta^2 s - \alpha \beta t - \alpha^2 .
\end{aligned}
\]
The discriminant $\Delta = t^2 + 4s$ satisfies $\Delta' = \beta^2 \Delta$. Up to isomorphism (assuming $\mathrm{char}(k) \neq 2$) there are three types: (1) $\Delta = 0$:  $A \cong k[\varepsilon]/(\varepsilon^2)$ (the dual numbers), (2) $\Delta \neq 0$ and $\Delta$ is a square in $k$: $A \cong k \times k$ (the split semisimple algebra), (3) $\Delta$ non-square in $k$: $A \cong k(\sqrt{\Delta})$ (a quadratic field extension of $k$).
\end{example} 
\subsection{Classification in low rank}

For $k$ algebraically closed, Gabriel \cite{gabriel} describes all the irreducible components and the orbit decomposition of $\Alg_3$ and $\Alg_4$. We note that Gabriel uses a slightly different definition where one simply requires that an identity element exists, rather than fixing one. However, it is easy to see that the degeneration picture associated with orbit closures remains the same.

As we have seen in Example \ref{ex1}, for $r=2$, there is only one irreducible component and the algebra $A_{\bullet}  \cong  k[x]/x^2$ corresponding to the closed orbit deforms to the algebra $k \times k$ in the open orbit.

For $r=3$, there are two irreducible components (of dimensions 6 and 4): the closure of the orbit of $k \times k \times k$ and the closure of the orbit of the path algebra of the $A_2$-quiver, $\begin{tikzcd}[ampersand replacement=\&, column sep=small]
{\scriptstyle\bullet} \arrow[r] \& {\scriptstyle\bullet}
\end{tikzcd}$. Both components are smooth, in fact, isomorphic to $\mathbb{A}^6$ and $\mathbb{A}^4$, respectively (see \cite{LeBruynReichstein}). The partial order induced by orbit closures is described by the following diagram (reproduced from \cite{gabriel}, see also \cite{LeBruynReichstein}). We also included the dimension of the orbit of each isomorphism type.
\begin{center}
\begin{tikzpicture}[node distance=2cm, every node/.style={align=center},  yscale=0.9]
\node (A) at (0,0.6) {
\tikz{
\node (x) at (0.7,0) {$\scriptstyle\bullet$};
\node (y) at (1.5,0) {$\scriptstyle\bullet$};
\draw[->] (x) -- (y);
}
};
\node (B) at (6,3) {\scriptsize $k \times k \times k$};
\node at (8,3) {\scriptsize $6$};
\node (C) at (6,1.8) {\scriptsize $k \times k[x]/(x^2)$};
\node at (8,1.8) {\scriptsize $5$};
\node (D) at (6,0.6) {\scriptsize $k[x]/(x^3)$};
\node at (8,0.6) {\scriptsize $4$};
\node (E) at (3,-1) {\scriptsize $k[x,y]/(x,y)^2$};
\node at (8,-1) {\scriptsize $2$};
\draw[->] (B) -- (C);
\draw[->] (C) -- (D);
\draw[->] (A) -- (E);
\draw[->] (D) -- (E);
\end{tikzpicture}
\end{center}

We next turn our attention to 4-dimensional algebras. Gabriel \cite{gabriel} announces (a detailed modern proof appears in \cite{FialowskiPenkava}) that there are 19 classes of unital associative algebras of rank 4 (18 isomorphism types and 1 continuous family), and there are 5 irreducible components of $\Alg_4$. Four of these are the orbit closures of the following rigid algebras.
\begin{enumerate}[label=(\roman*), nosep]
\item $k \times k \times k \times k$, orbit dimension 12.   
\item $M_2(k)$, 2-by-2 matrices,  orbit dimension 9 (isomorphic to $\mathbb{A}^{9}$, see \cite{seshadri} Ch. 5). 
\item $k \times (\begin{tikzcd}[ampersand replacement=\&, column sep=small]
{\scriptstyle\bullet} \arrow[r] \& {\scriptstyle\bullet}
\end{tikzcd})$, orbit dimension 10. 
\item $\begin{tikzcd}[ampersand replacement=\&, column sep=small]
{\scriptstyle\bullet} \arrow[r, shift left] \arrow[r, shift right] \& {\scriptstyle\bullet}
\end{tikzcd}$ Kronecker quiver, orbit dimension 6 (smooth component).
\end{enumerate}
The remaining irreducible component has a Zariski open subset that is a union of the orbits (each of dimension 8) of the following continuous family of algebras:
\noindent \begin{align*} A_q &= k \{ x,y \} / (x^2, y^2, xy-qyx) \text{\ for\ } q \neq -1 \quad (A_q \simeq A_{1/q} \text{\ for\ } q \neq 0),\\  J &= k \{x,y \} / (x^2+yx, y^2, xy+yx). \end{align*} 
Let $J_q = k\{x,y\} / (x^2+yx, y^2, xy-qyx)$. Then the change of basis $(x,y) \mapsto ((q+1)x+y, y)$ shows that $J_q$ is isomorphic to $A_q$ for $q \neq -1$. However, $A_q$ and $J_q$ limit to non-isomorphic algebras as $q \to -1$.

The Hasse diagram of deformations of $4$-dimensional algebras is as follows (reproduced from \cite{gabriel} \footnote{In op. cit. the arrow from $\scriptsize k \times (\begin{tikzcd}[ampersand replacement=\&, column sep=small]
{\scriptstyle\bullet} \arrow[r] \& {\scriptstyle\bullet}
\end{tikzcd})$ to $\scriptsize k \times k[x,y]/(x,y)^2$ seems to be omitted.}). We highlighted various irreducible components in different colors (overlapping parts are arbitrary). We also indicated the dimension of the orbit of each isomorphism type.

\begin{center}
\begin{tikzpicture}[node distance = 4cm, yscale=1.2, xscale=1.5]

    \tikzset{dot/.style={circle, fill=black, inner sep=0pt, minimum size=4pt, align=center}}

    \foreach \y/\label in {0/3, 3/6, 4.2/7, 5.2/8, 6.2/9, 7.2/10, 8.2/11, 9.2/12} {
        \node at (6, \y) {\scriptsize \label};
    }

    \node (A0) at (0,0) {\scriptsize $k[x,y,z]/ (x,y,z)^2$};
    \node(Am) at (-2, 3) {\scriptsize $k\{x,y\}/(x^2,y^2,xy+yx)$};
    \node(Ap) at (0.8, 4.2) {\scriptsize $k[x,y]/(x^3,xy,y^2)$};
    \node (A1) at (2.5,3) {\scriptsize $\begin{tikzcd}[ampersand replacement=\&, column sep=small]
{\scriptstyle\bullet} \arrow[r, shift left] \arrow[r, shift right] \& {\scriptstyle\bullet}
\end{tikzcd}$};
    \node  (B4) at (-3.5, 5.2) {\scriptsize $\bullet \rightleftarrows \bullet$ };
    \node (J1) at (-2, 5.2) {\scriptsize $J$}; 
    \node (J2) at (0.8, 5.2) {\scriptsize $A_0$};
    \node (J3) at (5, 5.2) {\scriptsize $k \times k[x,y]/(x,y)^2$};
    \node (J4) at (3.2, 5.2) {\scriptsize $k[x,y]/(x^2,y^2)$};
    \node (J5) at (-0.8,5.2) {\scriptsize $A_q$};
    \node (E4) at (-3.5, 6.3) {\scriptsize $M_2(k)$};    
    \node (L9_top) at (0.8, 7.2) {\scriptsize $k \times (\begin{tikzcd}[ampersand replacement=\&, column sep=small]
{\scriptstyle\bullet} \arrow[r] \& {\scriptstyle\bullet}
\end{tikzcd})$ };
    \node (L9_left) at (0.2, 6.3)  {\scriptsize $\begin{tikzcd}  \arrow[distance=1em, loop, out=190, in=170, looseness=2, swap] \bullet \to \bullet \end{tikzcd}$};
    \node (L9_right) at (1.4, 6.3) {\scriptsize $\begin{tikzcd} \bullet \to \bullet  \arrow[distance=1em, loop right, out=350, in=10, looseness=2, swap] \end{tikzcd}$ };
     \node(R9_mid) at (3.2, 6.3) {$\scriptstyle k[x]/x^4$};
    \node(R10_left) at (3.2, 7.2) {$\scriptstyle k[x]/x^2 \times k[y]/y^2$};
    \node(R10_right) at (5, 7.2) {$\scriptstyle k \times k[x]/x^3$};
    \node(R11_mid) at (5, 8.2) {$\scriptstyle k \times k \times k[x]/x^2$};
    \node (R12_top) at (5,9.2) {\scriptsize $k\times k\times k\times k$};

    \draw[->, blue, shorten >= 1pt, shorten <= 1pt ] (Am) -- (A0);
    \draw[->,red , shorten >= 1pt, shorten <= 1pt ] (Ap) -- (A0);
    \draw[->,green!40!black, shorten >= 1pt, shorten <= 1pt ] (A1) -- (A0);
    \draw[->,blue, shorten >= 1pt, shorten <= 1pt ] (B4) -- (Am) ;
    \draw[->, shorten >= 1pt, shorten <= 1pt ] (J1) -- (Am) ; 
    \draw[->, shorten >= 1pt, shorten <= 1pt ] (J1) -- (Ap); 
    \draw[->, shorten >= 1pt, shorten <= 1pt ] (J2) -- (Ap);
    \draw[->,red, shorten >= 1pt, shorten <= 1pt ] (J3) -- (Ap);
    \draw[->,red, shorten >= 1pt, shorten <= 1pt ] (J4) -- (Ap);
    \draw[->, shorten >= 1pt, shorten <= 1pt ] (J5) -- (Ap);
    \draw[->, blue] (E4) -- (B4);    
    \draw[->, brown, shorten >= 1pt, shorten <= 1pt ] (L9_left) -- (J2);
    \draw[->, brown, shorten >= 1pt, shorten <= 1pt ] (L9_top) -- (L9_left);
    \draw[->, brown, shorten >= 1pt, shorten <= 1pt ] (L9_top) -- (L9_right);
    \draw[->, brown, shorten >= 1pt, shorten <= 1pt] (L9_right) -- (J2);
    \draw[->, red, shorten >= 1pt, shorten <= 1pt ]  (R9_mid) -- (J4);
    \draw[->, red,shorten >= 1pt, shorten <= 1pt ]  (R10_right) -- (J3);
    \draw[->, red, shorten >= 1pt, shorten <= 1pt ]  (R10_left) -- (R9_mid);
    \draw[->, red, shorten >= 1pt, shorten <= 1pt ]  (R10_right) -- (R9_mid);
    \draw[->, red, shorten >= 1pt, shorten <= 1pt ]  (R11_mid) -- (R10_left);
    \draw[->, red] (R11_mid) -- (R10_right);
    \draw[->, red] (R12_top) --  (R11_mid);

    \draw[dashed] (J1) -- (J5) -- (J2) -- (J4);
    
   \draw[->, shorten >= 5pt, shorten <= 5pt ] (L9_top) -- (J3);
    
\end{tikzpicture}
\end{center}

Finally, we remark that for $r=5$ Mazzola showed that $\Alg_5$ has $10$ irreducible components, 9 of which are orbit closures of rigid algebras. There are 54 discrete isomorphism classes, and 5 one-parameter families of classes. We refer to \cite{mazzola} for a detailed description. 

For more on $\mathbf{Alg}_r$, one can begin with \cite{gabriel}, \cite{LeBruynReichstein}, \cite{shafarevich}, \cite{poonen}, \cite{FialowskiPenkava}.
\newpage
\section{Immersed Lagrangians}
An $A_{\infty}$-algebra over a commutative ring $k$ is a $\mathbb{Z}$-graded $k$-module with a collection of $k$-linear maps $\mathfrak{m}_i : \mathscr{A}^{\otimes i} \to \mathscr{A}[2-i]$ for $i \geq 1$, where $\mathscr{A}[2-i]$ means that $\mathfrak{m}_i$ lowers the degree by $i-2$, satisfying the $A_{\infty}$-relations:
\[ \sum_{j,k} (-1)^{|a_1|+\ldots+|a_j|-j} \mathfrak{m}_{i-k+1} (a_i, \ldots, a_{j+k+1}, \mathfrak{m}_k(a_{j+k},\ldots, a_{j+1}), a_j, \ldots, a_1) =0. \]
Given an immersed Lagrangian $L$ with $r-1$ transverse double points with brane data (orientation, spin structure, local system, ...) on a symplectic surface $\Sigma$, Fukaya's construction gives an $A_\infty$-algebra $(\mathscr{A}_L, \{\mathfrak{m}_{i}\}_{i \geq 1})$ (see \cite{Akaho}, \cite{AkahoJoyce}). If the surface is non-compact, the symplectic form on $\Sigma$ is exact, and we can work with the exact Fukaya category as defined in \cite{SeidelBook}. By construction, $\mathscr{A}_L$ is linear over a coefficient ring $k$ which can be taken to be arbitrary in the exact setting. The underlying complex is the Floer cochain complex given by
$$ hom(L,L) = \bigoplus_{i=0}^{r-1} k \cdot w_i \oplus  k \cdot \bar{w}_i $$ 
where for $i \neq 0$, the pair $\{ w_i ,\bar{w}_i \}$ are associated with a self-intersection point of $L$, and the generators $w_0= e$ in degree 0 and $\bar{w}_0 = \bar{e}$ in degree 1 are associated to the minimum and maximum of a Morse function chosen on the domain of the Lagrangian immersion. By choosing a line field $\eta$ on $\Sigma$ which has winding number zero over $L$, we can equip $hom(L,L)$ with a $\mathbb{Z}$-grading. By construction, the degrees of $w_i$ and $\bar{w}_i$ satisfy \[ |w_i| + | \bar{w}_i | = 1.\]

Let $\gamma: S^1 \to \Sigma$ be an immersion with double points $\{x_1,\ldots, x_{n}\}$. For each $x_i$, let its preimages be $t_1,t_2 \in S^1$. If $(\dot{\gamma}(t_1),\dot{\gamma}(t_2))$ is positively oriented, then we put $x_i^+$ at $t_1$ and $x_i^{-}$ at $t_2$. We pick a starting point and go around the $S^1$ recording the double points $x_i^{\pm}$ and obtain a (cyclic) sequence of signed letters called the {\it signed Gauss word}. Each crossing appears exactly twice: once positively and once negatively. Two words are equivalent if they differ by (1) permuting the labels of the crossings, (2) changing all the exponents simultaneously to the opposite sign, (3) cyclically permuting the sequence, (4) reversing the sequence.  Thus, a Gauss word can be thought of as an element of the symmetric group $\mathfrak{S}_{2n}$ and the equivalence classes of Gauss words correspond to double cosets
\[ (\mathfrak{S}_n \times \mathbb{Z}_2) \backslash \mathfrak{S}_{2n} / D_{4n} \]
where $\mathfrak{S}_{n} \times \mathbb{Z}_2$ acts on the crossings by relabelling or changing overall sign,  and $D_{4n}$, dihedral subgroup of order $4n$, acts on the domain $S^1$. Using this description, we can easily compute the number of Gauss words for small $n$. This is given in the following table:
\begin{table}[H]
\centering
\caption{Classification of signed Gauss words with $n \leq 3$ }
\label{tab:gauss_genus_classification}
\begin{tabular}{|c|c|l|c|c|}
\hline
  \textbf{n} & \textbf{Total} & \textbf{Representative} & \textbf{Genus} & $\partial $ \textbf{components}\\ \hline
1 & \textbf{1} & $1^+ 1^-$ &  0  & (1,1,2) \\ \hline
2 & \textbf{3} & $1^+ 2^- 2^+ 1^-$ & 0 & (1,1,2,4) \\
  &  & $1^+ 2^+ 2^- 1^-$ & 0  & (1,1,3,3) \\ \cline{3-5}
  &  & $1^+ 2^+ 1^- 2^-$ & 1 & (2,6)  \\ \hline
3 & \textbf{12} & $1^+ 2^- 3^+ 3^- 2^+ 1^-$  & 0 & (1,1,2,2,6)\\
  &  & $1^+2^-3^-3^+2^+1^-$ & 0 & (1,1,2,3,5)\\
  &  & $1^+ 2^+ 2^- 3^+ 3^- 1^-$ & 0 & (1,1,1,4,5)\\
  &  & $1^+ 2^+ 3^+ 3^- 2^- 1^-$ & 0 & (1,1,3,3,4)\\
  &  & $1^+ 2^- 3^+ 1^- 2^+ 3^-$ & 0 &  (2,2,2,3,3)\\  
  &  & $1^+ 1^- 2^+ 2^- 3^+ 3^-$ & 0& (1,1,1,3,6)\\ \cline{3-5}
  &  & $1^+ 2^+ 3^+ 1^- 2^- 3^-$  & 1 & (2,4,6)\\
  &  & $1^+ 2^+ 3^- 3^+ 1^- 2^-$ & 1 & (1,4,7)\\
  &  & $1^+ 2^+ 1^- 3^+ 3^- 2^-$ & 1 & (1,2,9)\\
  &  & $1^+ 2^+ 3^+ 3^- 1^- 2^-$ & 1 &  (1,3,8) \\
  &  & $1^+ 2^+ 3^- 1^- 3^+ 2^-$ & 1 & (2,3,7)\\ \cline{3-5} 
  &  & $1^+ 2^+ 3^+ 1^- 3^- 2^-$  & 2 & (12) \\ \hline
\end{tabular}
\end{table}
Each signed Gauss word determines a unique compact surface $\overline{\Sigma}$ and an immersion $\gamma : S^1 \to L \subset \overline{\Sigma}$ which it {\it fills}. This means that $\overline{\Sigma} \setminus L$ is a union of disks (see \cite{Francis},\cite{Carter}). Therefore, one can define the genus of a Gauss word. From now on, we will always consider immersions in such surfaces, and we will write $\Sigma_\gamma$ for the ``tubular'' neighborhood of $L$ in such a surface $\Sigma$. The number of equivalence classes of signed Gauss words for a small number of double points with respect to the genus is given in the table below (reproduced from \cite{CairnsElton}). We also recommend \cite{arnold} for beautiful tables of planar curves.
\begin{table}[H]
\centering
\begin{tabular}{|c|c|c|c|c|c|c|}
\hline
\diagbox{$n$}{$g$} & 0 & 1 & 2 & 3 & 4 & TOTAL \\ \hline
1 & 1 & 0 & 0 & 0 & 0 & 1 \\ \hline
2 & 2 & 1 & 0 & 0 & 0 & 3 \\ \hline
3 & 6 & 5 & 1 & 0 & 0 & 12 \\ \hline
4 & 19 & 45 & 22 & 0 & 0 & 86 \\ \hline
5 & 76 & 335 & 427 & 56 & 0 & 894 \\ \hline
\end{tabular}
\end{table}
\begin{definition}\label{newalg} Given a signed Gauss word $L$ on the letters $\{w_1,\ldots, w_{r-1}\}$, we define a $\mathbb{Z}$-graded (strictly unital, minimal) $A_\infty$-structure with $\mathfrak{m}_i\neq 0$ only for $i=2,3$ on the free abelian group
  \[ \mathscr{A}_L = \bigoplus_{i=0}^{r-1} \mathbb{Z} w_i \oplus \mathbb{Z} \bar{w}_i \]
where grading is given by any assignments $|w_i|,|\bar{w}_i| \in \mathbb{Z}$ such that $|w_i|+ |\bar{w}_i|=1$. 

For $i=0,\ldots,r-1$, we have
\begin{align*}
  \mathfrak{m}_2(w_i,w_0) &= w_i = (-1)^{|w_i|} \mathfrak{m}_2(w_0,w_i) \\
  \mathfrak{m}_2(\bar{w}_i,w_0) &= \bar{w}_i = (-1)^{|\bar{w}_i|} \mathfrak{m}_2(w_0,\bar{w}_i) \\
  \mathfrak{m}_2(\bar{w}_i,w_i) &= \bar{w}_0 = (-1)^{|\bar{w}_i|}\mathfrak{m}_2(w_i,\bar{w}_i) 
\end{align*}
For each $i=1,\ldots, r-1$, we have
\begin{align*}
  \mathfrak{m}_3(\bar{w}_i, w_i, \bar{w}_i)  &= (-1)^{|\bar{w}_i|} \bar{w}_i \\
  \mathfrak{m}_3(\bar{w}_i, w_i, \bar{w}_0)  &= - \bar{w}_0 \\
   \mathfrak{m}_3(w_i,\bar{w}_i, \bar{w}_0)  &= - (-1)^{|\bar{w}_i|} \bar{w}_0
\end{align*}
For each subinterval of the form $w_i^{+} \ldots  w_j^{+}$, we have
\begin{align*}
  \mathfrak{m}_3 (\bar{w}_j, w_i, \bar{w}_i) &= (-1)^{|\bar{w}_i|} \bar{w}_j \\ 
  \mathfrak{m}_3 (w_i , \bar{w}_i, w_j) &= (-1)^{|\bar{w}_i| + |w_j|} w_j  
\end{align*}
For each subinterval of the form $w_i^{+} \ldots  w_j^{-}$, we have
\begin{align*}
  \mathfrak{m}_3 (w_j, w_i , \bar{w}_i) &= (-1)^{|\bar{w}_i|} w_j \\ 
  \mathfrak{m}_3 (w_i, \bar{w}_i, \bar{w}_j) &= (-1)^{|\bar{w}_i|+|\bar{w}_j|}\bar{w}_j 
\end{align*}
For each subinterval of the form $w_i^{-} \ldots  w_j^{+}$, we have
\begin{align*}
  \mathfrak{m}_3 (\bar{w}_j, \bar{w}_i, w_i) &= \bar{w}_j  \\
  \mathfrak{m}_3 (\bar{w}_i, w_i, w_j) &=  (-1)^{|w_j|} w_j 
\end{align*}
For each subinterval of the form $w_i^{-} \ldots  w_j^{-}$, we have
\begin{align*}
  \mathfrak{m}_3 (w_j , \bar{w}_i, w_i) &= w_j \\ 
  \mathfrak{m}_3 (\bar{w}_i, w_i, \bar{w}_j) &= (-1)^{|\bar{w}_j|} \bar{w}_j 
\end{align*}
  \end{definition}
  \begin{rmk} In Definition \ref{newalg}, we defined a $\mathbb{Z}$-graded $A_\infty$ algebra for any choice of $|w_i| \in \mathbb{Z}$ for $i=1,\ldots, r-1$. Any choice is allowed and corresponds to all possible line fields $\eta$ on the tubular neighborhood $\Sigma$ of $L$ which make $L$ gradable. In this paper, we will mostly work with grading structures with $|w_i| \in \{0,1\}$.
\end{rmk}

  \begin{prop}\label{ainf} $\mathscr{A}_L$ is a strictly unital, minimal  $A_{\infty}$-algebra, that is, the following equations hold:
    \begin{align*}  &\mathfrak{m}_2(x,\mathfrak{m}_2(y,z)) - (-1)^{|z|} \mathfrak{m}_2(\mathfrak{m}_2(x,y),z) = 0 \\
                    &\mathfrak{m}_3(x,y,\mathfrak{m}_2(z,u)) -  (-1)^{|u|} \mathfrak{m}_3(x,\mathfrak{m}_2(y,z),u ) + (-1)^{|z|+|u|} \mathfrak{m}_3(\mathfrak{m}_2(x,y),z,u) \\ &-  (-1)^{|u|} \mathfrak{m}_2(\mathfrak{m}_3(x,y,z),u) + \mathfrak{m}_2(x,\mathfrak{m}_3(y,z,u)) = 0 \\
      & \mathfrak{m}_3(x,y,\mathfrak{m}_3(z,u,v)) - (-1)^{|v|}  \mathfrak{m}_3(x,\mathfrak{m}_3(y,z,u),v) +  (-1)^{|u|+|v|} \mathfrak{m}_3(\mathfrak{m}_3(x,y,z),u,v) = 0  
    \end{align*}
    \end{prop}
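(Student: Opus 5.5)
The plan is to verify the three families of identities directly, exploiting how rigid the structure in Definition \ref{newalg} is. I do not want to appeal to the geometric construction, because the proposition should hold for the abstract definition and over $\mathbb{Z}$. The basic structural observation I will use throughout is the following. Apart from the unit, $\mathfrak{m}_2$ is nonzero only on the pairs $(\bar w_i,w_i)$ and $(w_i,\bar w_i)$, and there it outputs $\bar w_0$. Every nonzero $\mathfrak{m}_3$ contains a \emph{consecutive} pair $(w_i,\bar w_i)$ or $(\bar w_i,w_i)$ with $i\ge 1$, and its output is $\pm$ the remaining (third) input. So $\mathfrak{m}_3$ is ``diagonal'': for each $i\ge1$ and each position, inserting the pair defines an operator on $\mathscr{A}_L$ that is a signed multiple of the identity on each basis vector. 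The list in Definition \ref{newalg} fixes that multiple, according to the relative position of $w_i^\pm$ and the letter $w_j^\pm$ in the Gauss word. Here $w_j$ is the letter attached to the third input, and $\bar w_0$ counts as the degenerate case.

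\textbf{Step 1: the $\mathfrak{m}_2$-relation.} $(\mathscr{A}_L,\mathfrak{m}_2)$ is a square-zero trivial-extension type algebra: $w_0$ is the unit, and all products of two non-unit generators land in $k\bar w_0$, which is annihilated by every non-unit. Hence any triple product with no unit input vanishes on both sides of the associativity equation. If some input is $w_0$, associativity reduces to the unit formulas, and only a sign comparison remains. I will check that comparison using $|w_i|+|\bar w_i|=1$.

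\textbf{Step 2: the mixed $\mathfrak{m}_2/\mathfrak{m}_3$-relation.} First I will treat the case where one of $x,y,z,u$ is $w_0$. Then the terms containing $\mathfrak{m}_3$ with a unit input vanish, because $\mathfrak{m}_3$ is strictly unital by definition. The surviving terms cancel in pairs, and this is a sign check. If no input is $w_0$, a term $\mathfrak{m}_3(\dots,\mathfrak{m}_2(\cdot,\cdot),\dots)$ is nonzero only if the inner $\mathfrak{m}_2$ produces $\bar w_0$. Likewise an outer $\mathfrak{m}_2(\mathfrak{m}_3(\cdot),u)$ is nonzero only when $u$ pairs with the output of $\mathfrak{m}_3$. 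This leaves a short list of patterns, essentially $(\bar w_i,w_i,\bar w_j,w_j)$ and its variants. For these, the relation reduces exactly to the three special rules for $\mathfrak{m}_3$ with output $\bar w_0$, together with the fact that the coefficient of $w_j$ and the coefficient of $\bar w_j$ in the $i$-operator are compatible with the pairing $\mathfrak{m}_2(\bar w_j,w_j)=\bar w_0$. I will check this compatibility separately for each of the four subinterval types.

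\textbf{Step 3: the $\mathfrak{m}_3\circ\mathfrak{m}_3$-relation.} This is the step I expect to be the main obstacle. A nonzero term needs two pairs, say from letters $i$ and $l$, together with a fifth ``carried'' element $v$. By diagonality, the three terms of the relation are the same basis vector times products of two signs. Their cancellation amounts to a statement about the operators: inserting the $i$-pair and then the $l$-pair equals, up to the Koszul signs prescribed in the relation, doing it in the other order, plus the middle term. That middle term arises exactly when one pair sits \emph{inside} the arguments of the other, as in $\mathfrak{m}_3(x,\mathfrak{m}_3(y,z,u),v)$ with output $y$ or $u$. I will organize the cases by the cyclic positions of $i^\pm, l^\pm, j^\pm$. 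Since the word is cyclic, both subintervals $i\ldots j$ and $j\ldots i$ occur, so every configuration of three letters appears. The key check is that the signs attached to the four subinterval types are multiplicative in the appropriate sense: they depend only on the sign of the letter $j$ and of the pair letter. This is what makes the two orderings agree. The degenerate cases $i=l$ (using $\mathfrak{m}_3(\bar w_i,w_i,\bar w_i)$) and $v=\bar w_0$ will be handled separately. As a consistency check on the signs, I will compare with the geometric interpretation: each $\mathfrak{m}_3$ corresponds to a thin triangle or bigon at a node, so each relation corresponds to the boundary of a one-dimensional moduli space.
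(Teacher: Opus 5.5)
Your overall strategy is the paper's. You strip off $w_0$ by strict unitality and observe that $\mathfrak{m}_2$ on non-units is square-zero with values in $k\bar w_0$. You then read each nonzero $\mathfrak{m}_3$ as a matching pair $(a,b)$, with $\mathfrak{m}_2(a,b)=\epsilon\bar w_0$, absorbing the third input with sign $\epsilon(-1)^{|c|}$ on the right and $\epsilon$ on the left; your ``diagonality'' and sign remarks are exactly this.

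The gap is in the combinatorics that decides \emph{which} absorptions occur. You assert that, since the word is cyclic, both subintervals $i\ldots j$ and $j\ldots i$ occur, and you plan to organize Step 3 by cyclic positions. In Definition \ref{newalg} the word is read linearly from the basepoint carrying $w_0,\bar w_0$. For instance, for $1^+2^+2^-1^-$ one has $\mathfrak{m}_3(w_1,\bar w_1,w_2)\neq 0$ but $\mathfrak{m}_3(w_2,\bar w_2,w_1)=0$. Also, $\bar w_0$ behaves like a letter placed after the whole word: every pair absorbs it on the right, none on the left. The fact the paper's proof actually uses is that for two distinct signed letters exactly one precedes the other. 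Under your reading the relations would fail.

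Concretely, take matching pairs $(x,y)$, $(z,u)$ with $\mathfrak{m}_2(x,y)=\epsilon_1\bar w_0$ and $\mathfrak{m}_2(z,u)=\epsilon_2\bar w_0$. The term $\mathfrak{m}_3(x,y,\mathfrak{m}_2(z,u))=-\epsilon_1\epsilon_2\bar w_0$ is always present. The term $-(-1)^{|u|}\mathfrak{m}_2(\mathfrak{m}_3(x,y,z),u)=+\epsilon_1\epsilon_2\bar w_0$ survives iff the letter of $(x,y)$ precedes that of $(z,u)$, and $\mathfrak{m}_2(x,\mathfrak{m}_3(y,z,u))=+\epsilon_1\epsilon_2\bar w_0$ survives iff it follows. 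If both subintervals were present, both would survive and the sum would be $\epsilon_1\epsilon_2\bar w_0\neq 0$. So no ``compatibility'' check per subinterval type can replace this dichotomy.

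The same happens in Step 3. For $\mathfrak{m}_3(x,y,\mathfrak{m}_3(z,u,v))$, let $A,B$ be the pair letters and let $C$ be the letter of the pair having $v$ as its first entry. The first term needs $A,B<C$, the second term $B<A<C$, and the third $A<B<C$. So $A<B<C$ kills the second term and $B<A<C$ kills the third. But $B<C<A$, which is the same cyclic order as $A<B<C$, kills all three. The cancellation is transitivity and totality of a linear order, which cyclic positions cannot detect.

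The only configuration where neither letter precedes the other is when the two pairs coincide as ordered pairs. This is exactly what the small rectangle $\mathfrak{m}_3(\bar w_i,w_i,\bar w_i)$ handles: for two copies of the $i^-$ pair it supplies the ``precedes'' term, and for two copies of the $i^+$ pair the ``follows'' term.
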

\begin{proof} By the strict unitality constraints $\mathfrak{m}_2(w_0,x) = (-1)^{|x|}x$ and $\mathfrak{m}_2(x,w_0) = x$, and because $\mathfrak{m}_n$ identically vanishes if $w_0$ is among its inputs for $n \geq 3$, any relation containing $w_0$ trivially reduces to a lower-arity relation. Thus, we assume all inputs belong to the set of non-unit generators $\{w_i, \bar{w}_i, \bar{w}_0\}$.

  Excluding $w_0$, the only non-zero $\mathfrak{m}_2$ products occur on matching pairs $\{w_i, \bar{w}_i\}$, which evaluate to $\pm \bar{w}_0$. Because $\bar{w}_0$ strictly annihilates all non-$w_0$ generators under $\mathfrak{m}_2$, any sequence of two nested $\mathfrak{m}_2$ operations evaluates identically to zero. Thus, the first equation holds.

  An inspection of Definition \ref{newalg} reveals that $\mathfrak{m}_3$ operations simply codify rules for how a matching pair $(a,b)$ ``absorbs'' an adjacent third element $c$. Notice that a specifically ordered matching pair $(a,b)$ corresponds to exactly one signed letter in the Gauss word (e.g., $(w_i, \bar{w}_i)$ identifies $i^+$, while $(\bar{w}_i, w_i)$ identifies $i^-$). Let $\mathfrak{m}_2(a,b) = \epsilon \bar{w}_0$. For any element $c$, the rules universally yield:
  \begin{itemize}[nosep, label={}]
      \item \textit{Right Absorption:} $\mathfrak{m}_3(a,b,c) = \epsilon (-1)^{|c|} c \quad$ (or zero, depending on the Gauss word)
      \item \textit{Left Absorption:} $\mathfrak{m}_3(c,a,b) = \epsilon c \quad$ (or zero, depending on the Gauss word)
  \end{itemize}
  Because $\bar{w}_0$ is only permitted as the third argument of $\mathfrak{m}_3$, the second equation from the statement reduces to:
  \[ \mathfrak{m}_3(x,y,\mathfrak{m}_2(z,u)) -  (-1)^{|u|} \mathfrak{m}_2(\mathfrak{m}_3(x,y,z),u) + \mathfrak{m}_2(x,\mathfrak{m}_3(y,z,u)) = 0 \]
  For the first term to be non-zero, both $(x,y)$ and $(z,u)$ must be matching pairs. Let $\mathfrak{m}_2(x,y) = \epsilon_1 \bar{w}_0$ and $\mathfrak{m}_2(z,u) = \epsilon_2 \bar{w}_0$. 
  Applying Right Absorption, the first term is $\mathfrak{m}_3(x,y, \epsilon_2\bar{w}_0) = -\epsilon_1\epsilon_2\bar{w}_0$.
  
  Each signed letter appears exactly once in the Gauss word. Therefore, the specific signed letter corresponding to $(x,y)$ must appear either strictly before or strictly after the signed letter for $(z,u)$. 
 
  \emph{$(x,y)$ precedes $(z,u)$.}

  The third term vanishes because $(z,u)$ is forbidden from Left-Absorbing $y$. As $(x,y)$ is permitted to Right-Absorb $z$, the second term evaluates to \[ -(-1)^{|u|} \mathfrak{m}_2( \epsilon_1 (-1)^{|z|} z, u ) = -(-1)^{|u|+|z|} \epsilon_1 \mathfrak{m}_2(z,u).\] Since $|z|+|u|=1$, this becomes $+\epsilon_1 \epsilon_2 \bar{w}_0$, identically canceling the first term.

    \emph{$(x,y)$ follows $(z,u)$.}

    The second term vanishes because $(x,y)$ cannot Right-Absorb $z$. As $(z,u)$ Left-Absorbs $y$, the third term evaluates to $\mathfrak{m}_2(x, \epsilon_2 y) = \epsilon_2 \mathfrak{m}_2(x,y) = +\epsilon_1\epsilon_2\bar{w}_0$, again identically canceling the first term.

If the first term vanishes, in a similar way, the remaining terms can only be non-zero when the inputs form interleaved matching pairs - such as $(y,z)$ and $(x,u)$ - whose evaluations identically cancel each other out via the Left and Right absorption rules. Thus, the second equation holds.
  
     A non-zero nested $\mathfrak{m}_3$ evaluation requires exactly two matching pairs $(x,y)$ and $(z,u)$ acting on a singleton $v$. Let their evaluation signs be $\epsilon_1$ and $\epsilon_2$.
  The first term applies Right Absorption twice: $\mathfrak{m}_3(x,y, \epsilon_2(-1)^{|v|}v) = \epsilon_1\epsilon_2(-1)^{2|v|}v = +\epsilon_1\epsilon_2 v$.

  \emph{$(x,y)$ precedes $(z,u)$.}

  The second term vanishes. The third term applies Right Absorption twice, yielding \[ +(-1)^{|u|+|v|} \mathfrak{m}_3( \epsilon_1(-1)^{|z|}z, u, v) = (-1)^{|u|+|v|+|z|} \epsilon_1 \epsilon_2 (-1)^{|v|} v.\] Because $|u|+|z|=1$ and $2|v|$ is an even integer, this evaluates to $-\epsilon_1\epsilon_2 v$, canceling the first term.

  \emph{$(x,y)$ follows $(z,u)$.}

  The third term vanishes. The second term evaluates to \[ -(-1)^{|v|} \mathfrak{m}_3(x, \epsilon_2 y, v) = -(-1)^{|v|} \epsilon_2 (\epsilon_1 (-1)^{|v|} v) = -\epsilon_1\epsilon_2 v,\] again canceling the first term.

  Symmetric cancellations govern the two alternate configurations in a similar fashion: the adjacency requirements of the matching pairs guarantee that at least one term trivially vanishes, while any surviving non-zero evaluations mutually cancel. The third equation follows.
\end{proof}
   Definition \ref{newalg} is actually the result of a computation of the $A_\infty$ algebra $hom(L,L)$ in $\mathcal{F}(\Sigma_\gamma)$ obtained by taking successive push-offs of $L$. To understand this, the reader should examine the following figure for the Gauss word $1^-2^+3^-1^+2^-3^+$ and look for triangles and rectangles contributing to the product.
  \begin{center}
    \begin{tikzpicture} [baseline=-0.6ex, scale=1.5]
     \draw[thick, samples=100, smooth, domain=0:360, postaction={decorate}, decoration={markings, mark=at position 0.22 with {\arrow{<}}}]
    plot ({sin(\x) + 2*sin(2*\x)}, {cos(\x) - 2*cos(2*\x)}) -- cycle;
    \draw[blue, thick, samples=100, smooth, domain=8:352,  postaction={decorate}, decoration={markings, mark=at position 0.2 with {\arrow{<}}} ] 
        plot ({sin(\x) + 1.8*sin(2*\x)}, {cos(\x) - 1.8*cos(2*\x)});
   \draw[red, thick, samples=100, smooth, domain=6:354,  postaction={decorate}, decoration={markings,  mark=between positions 0.2 and 0.21 step 1 with {\arrow{<}}}] 
        plot ({sin(\x) + 1.6*sin(2*\x)}, {cos(\x) - 1.6*cos(2*\x)});
        \draw[black!40!green, thick, samples=100, smooth, domain=4:356, postaction={decorate}, decoration={markings,   mark=between positions 0.2 and 0.21 step 1 with {\arrow{<}}}] plot ({sin(\x) + 1.5*sin(2*\x)}, {cos(\x) - 1.4*cos(2*\x)}) ;

\node[blue] (b1) at ({sin(8) + 1.8*sin(16)}, {cos(8) - 1.8*cos(16)}) {};
\node[blue] (b2) at ({sin(352) + 1.8*sin(704)}, {cos(352) - 1.8*cos(704)}) {};
\node[red] (r1) at ({sin(6) + 1.6*sin(12)}, {cos(6) - 1.6*cos(12)}) {};
\node[red] (r2) at ({sin(354) + 1.6*sin(708)}, {cos(354) - 1.6*cos(708)}) {};
\node[black!40!green] (g1) at ({sin(4) + 1.5*sin(8)}, {cos(4) - 1.4*cos(8)}) {};
\node[black!40!green] (g2) at ({sin(356) + 1.5*sin(712)}, {cos(356) - 1.4*cos(712)}) {};

\draw[thick, blue]
(b1.center) .. controls +(180:0.2) and +(40:0.2) ..  (0.4, -1.35)
            .. controls +(220:0.1) and +(0:0.1) ..  (0, -1.4)
            .. controls +(180:0.1) and +(320:0.1) .. (-0.4,-1.32)
             .. controls +(140:0.2) and +(0:0.2) .. (b2.center);

\draw[thick, red]
(r1.center) .. controls +(180:0.2) and +(40:0.2) ..  (0.3, -1.75)
            .. controls +(220:0.1) and +(0:0.1) ..  (0, -1.8)
            .. controls +(180:0.1) and +(320:0.1) .. (-0.3,-1.72)
             .. controls +(140:0.2) and +(0:0.2) .. (r2.center);

\draw[thick, black!40!green]
(g1.center) .. controls +(180:0.2) and +(40:0.2) ..  (0.2, -2.15)
            .. controls +(220:0.1) and +(0:0.1) ..  (0, -2.2)
            .. controls +(180:0.1) and +(320:0.1) .. (-0.2,-2.12)
             .. controls +(140:0.2) and +(0:0.2) .. (g2.center);

\node[] at (-1.3, 0)  {\scriptsize $w_1$};
\node[] at (0.62, 1.08)  {\scriptsize $w_2$};
\node[] at (1.3, 0)  {\scriptsize $w_3$};
\node[] at (0.65, -1.1)  {\scriptsize $\bar{w}_0$};
\node[] at (-0.65, -1.1)  {\scriptsize $w_0$};
             
    \node[] (v1) at (3,-3) {};
    \node[] (v2) at (3,-2) {} ;
    \node[] (v3) at (4,-2) {};
    \node[] (v4) at (4,-3) {};

    \draw[thick,black!40!green] (v1.north) -- (v2.center);
    \draw[thick,red] (v3.center) -- (v2.center);
    \draw[thick,blue] (v3.center) -- (v4.center);
    \draw[thick] (v4.center) -- (v1.east);
\end{tikzpicture}
  \end{center}
  We give a list of all the polygons that appear in such pictures.  
  \begin{center}
  \begin{tikzpicture}[scale=0.8]
    \tikzset{->-/.style={decoration={markings, mark=at position #1 with {\arrow{>}}},postaction={decorate}}}
    \node[] (o) at (0,0) {};
    \draw[thick, ->-=.5] (1,1) -- (o.east);
    \draw[thick, blue, ->-=.5] (1,1) -- (-1,1);
    \draw[thick, red, ->-=.5]  (o.west) -- (-1,1);
    \node[] at (0,-0.3) {\small $w_i$};
    \node[] at (1,1.3) {\small $w_0$};
    \node[] at (-1,1.3) {\small $w_i$};
      
    \node[] (o) at (3,0) {};
    \draw[thick, ->-=.5] (4,1) -- (o.east);
    \draw[thick, blue, ->-=.5] (4,1) -- (2,1);
    \draw[thick, red, ->-=.5] (2,1) -- (o.west);
    \node[] at (3,-0.3) {\small $\bar{w}_i$};
    \node[] at (4,1.3) {\small $w_0$};
    \node[] at (2,1.3) {\small $\bar{w}_i$};
     
      \node[] (o) at (6,0) {};
    \draw[thick, ->-=.5] (o.east) -- (7,1);
    \draw[thick, blue, ->-=.5] (5,1) -- (7,1);
    \draw[thick, red, ->-=.5] (5,1) -- (o.west);
 \node[] at (6,-0.3) {\small $w_i$};
    \node[] at (7,1.3) {\small $w_i$};
    \node[] at (5,1.3) {\small $w_0$};
   
      \node[] (o) at (9,0) {};
    \draw[thick, ->-=.5] (10,1) -- (o.east);
    \draw[thick, blue, ->-=.5] (8,1) -- (10,1);
    \draw[thick, red, ->-=.5] (8,1) -- (o.west);
 \node[] at (9,-0.3) {\small $\bar{w}_i$};
    \node[] at (10,1.3) {\small $\bar{w}_i$};
    \node[] at (8,1.3) {\small $w_0$};
   
      \node[] (o) at (12,0) {};
    \draw[thick, ->-=.5] (13,1) -- (o.east);
    \draw[thick, blue, ->-=.5] (13,1) -- (11,1);
    \draw[thick, red, ->-=.5] (11,1) -- (o.west);
 \node[] at (12,-0.3) {\small $w_0$};
    \node[] at (13,1.3) {\small $w_i$};
    \node[] at (11,1.3) {\small $\bar{w}_i$};
   
      \node[] (o) at (15,0) {};
    \draw[thick, ->-=.5] (16,1) -- (o.east);
    \draw[thick, blue, ->-=.5] (14,1) -- (16,1);
    \draw[thick, red, ->-=.5] (14,1) -- (o.west);
   \node[] at (15,-0.3) {\small $w_0$};
    \node[] at (16,1.3) {\small $\bar{w}_i$};
    \node[] at (14,1.3) {\small $w_i$};

  \end{tikzpicture}
    \end{center}

  \begin{center}
  \begin{tikzpicture}
    \tikzset{->-/.style={decoration={markings, mark=at position #1 with {\arrow{>}}},postaction={decorate}}}
  \node[] at (4.5,1.5) {$w_i^+ \ldots w_j^-$}; 
   \node[] (v1) at (1,0) {};
    \node[] (v2) at (1,1) {};
    \node[] (v3) at (2,1) {};
    \node[] (v4) at (2,0) {};
\node[xshift=8pt] at (2,0) {\small $\bar{w}_i$};
\node[xshift=8pt] at (2,1) {\small $w_i$};
\node[xshift=-8pt] at (1,1) {\small $w_j$};
\node[xshift=-8pt] at (1,0) {\small $w_j$};

\draw[thick,black!40!green, ->-=.5] (v1.north) -- (v2.center);
\draw[thick,red, ->-=.5] (v3.center) -- (v2.center);
\draw[thick,blue, ->-=.5] (v3.center) -- (v4.center);
\draw[thick, ->-=.5] (v4.center) -- (v1.east);

\node at (1.5, -1) {\small $\mathfrak{m}_3(w_j,w_i,\bar{w}_i) = (-1)^{|\bar{w}_i|} w_j$};

\node[] (v1) at (7,0) {};
\node[] (v2) at (7,1) {};
\node[] (v3) at (8,1) {};
\node[] (v4) at (8,0) {};
\node[xshift=8pt] at (8,0) {\small $\bar{w}_j$};
\node[xshift=8pt] at (8,1) {\small $\bar{w}_i$};
\node[xshift=-8pt] at (7,1) {\small $w_i$};
\node[xshift=-8pt] at (7,0) {\small $\bar{w}_j$};

\draw[thick,black!40!green, ->-=.6]  (v2.center) --  (v1.north) ;
    \draw[thick,red, ->-=.5] (v2.center) --  (v3.center)  ;
    \draw[thick,blue, ->-=.5] (v3.center) -- (v4.center);
    \draw[thick, ->-=.5] (v4.center) -- (v1.east);

\node at (7.5, -1) {\small $\mathfrak{m}_3(w_i,\bar{w}_i,\bar{w}_j) = (-1)^{|\bar{w}_i|+|\bar{w}_j|} \bar{w}_j$};
\end{tikzpicture}
\begin{tikzpicture}
    \tikzset{->-/.style={decoration={markings, mark=at position #1 with {\arrow{>}}},postaction={decorate}}}
  \node[] at (4.5,1.5) {$w_i^+ \ldots w_j^+$}; 
   \node[] (v1) at (1,0) {};
    \node[] (v2) at (1,1) {};
    \node[] (v3) at (2,1) {};
    \node[] (v4) at (2,0) {};
\node[xshift=8pt] at (2,0) {\small $\bar{w}_i$};
\node[xshift=8pt] at (2,1) {\small $w_i$};
\node[xshift=-8pt] at (1,1) {\small $\bar{w}_j$};
\node[xshift=-8pt] at (1,0) {\small $\bar{w}_j$};

\draw[thick,black!40!green, ->-=.6] (v2.center) --  (v1.north);
\draw[thick,red, ->-=.5] (v3.center) -- (v2.center);
\draw[thick,blue, ->-=.5] (v3.center) -- (v4.center);
\draw[thick, ->-=.5] (v1.east) -- (v4.center); 

\node at (1.5, -1) {\small $\mathfrak{m}_3(\bar{w}_j,w_i,\bar{w}_i) = (-1)^{|\bar{w}_i|} \bar{w}_j$};

\node[] (v1) at (7,0) {};
\node[] (v2) at (7,1) {};
\node[] (v3) at (8,1) {};
\node[] (v4) at (8,0) {};
\node[xshift=8pt] at (8,0) {\small $w_j$};
\node[xshift=8pt] at (8,1) {\small $\bar{w}_i$};
\node[xshift=-8pt] at (7,1) {\small $w_i$};
\node[xshift=-8pt] at (7,0) {\small $w_j$};

\draw[thick,black!40!green, ->-=.6] (v2.center) -- (v1.north)  ;
    \draw[thick,red, ->-=.5] (v2.center) --  (v3.center)  ;
    \draw[thick,blue, ->-=.5] (v3.center) -- (v4.center);
    \draw[thick, ->-=.5](v1.east) --  (v4.center) ;

\node at (7.5, -1) {\small $\mathfrak{m}_3(w_i,\bar{w}_i,w_j) = (-1)^{|\bar{w}_i|+|w_j|} w_j$};
\end{tikzpicture}
\begin{tikzpicture}
    \tikzset{->-/.style={decoration={markings, mark=at position #1 with {\arrow{>}}},postaction={decorate}}}
  \node[] at (4.5,1.5) {$w_i^- \ldots w_j^+$}; 
   \node[] (v1) at (1,0) {};
    \node[] (v2) at (1,1) {};
    \node[] (v3) at (2,1) {};
    \node[] (v4) at (2,0) {};
\node[xshift=8pt] at (2,0) {\small $w_i$};
\node[xshift=8pt] at (2,1) {\small $\bar{w}_i$};
\node[xshift=-8pt] at (1,1) {\small $\bar{w}_j$};
\node[xshift=-8pt] at (1,0) {\small $\bar{w}_j$};

\draw[thick,black!40!green, ->-=.6] (v2.center) --  (v1.north);
\draw[thick,red, ->-=.5] (v3.center) -- (v2.center);
\draw[thick,blue, ->-=.5] (v4.center) -- (v3.center);
\draw[thick, ->-=.5] (v1.east) -- (v4.center); 

\node at (1.5, -1) {\small $\mathfrak{m}_3(\bar{w}_j,\bar{w}_i,w_i) = \bar{w}_j$};

\node[] (v1) at (7,0) {};
\node[] (v2) at (7,1) {};
\node[] (v3) at (8,1) {};
\node[] (v4) at (8,0) {};
\node[xshift=8pt] at (8,0) {\small $w_j$};
\node[xshift=8pt] at (8,1) {\small $w_i$};
\node[xshift=-8pt] at (7,1) {\small $\bar{w}_i$};
\node[xshift=-8pt] at (7,0) {\small $w_j$};

\draw[thick,black!40!green, ->-=.6] (v2.center) -- (v1.north)  ;
    \draw[thick,red, ->-=.5] (v3.center) --  (v2.center)  ;
    \draw[thick,blue, ->-=.5] (v3.center) -- (v4.center);
    \draw[thick, ->-=.5](v1.east) --  (v4.center) ;

\node at (7.5, -1) {\small $\mathfrak{m}_3(\bar{w}_i,w_i,w_j) = (-1)^{|w_j|} w_j$};
\end{tikzpicture}
\begin{tikzpicture}
    \tikzset{->-/.style={decoration={markings, mark=at position #1 with {\arrow{>}}},postaction={decorate}}}
  \node[] at (4.5,1.5) {$w_i^- \ldots w_j^-$}; 
   \node[] (v1) at (1,0) {};
    \node[] (v2) at (1,1) {};
    \node[] (v3) at (2,1) {};
    \node[] (v4) at (2,0) {};
\node[xshift=8pt] at (2,0) {\small $w_i$};
\node[xshift=8pt] at (2,1) {\small $\bar{w}_i$};
\node[xshift=-8pt] at (1,1) {\small $w_j$};
\node[xshift=-8pt] at (1,0) {\small $w_j$};

\draw[thick,black!40!green, ->-=.6]  (v1.north) --  (v2.center);
\draw[thick,red, ->-=.5] (v3.center) -- (v2.center);
\draw[thick,blue, ->-=.5] (v4.center) -- (v3.center);
\draw[thick, ->-=.5](v4.center) --  (v1.east) ; 

\node at (1.5, -1) {\small $\mathfrak{m}_3(w_j,\bar{w}_i,w_i) = w_j$};

\node[] (v1) at (7,0) {};
\node[] (v2) at (7,1) {};
\node[] (v3) at (8,1) {};
\node[] (v4) at (8,0) {};
\node[xshift=8pt] at (8,0) {\small $\bar{w}_j$};
\node[xshift=8pt] at (8,1) {\small $w_i$};
\node[xshift=-8pt] at (7,1) {\small $\bar{w}_i$};
\node[xshift=-8pt] at (7,0) {\small $\bar{w}_j$};

\draw[thick,black!40!green, ->-=.6] (v2.center) -- (v1.north)  ;
    \draw[thick,red, ->-=.5] (v3.center) --  (v2.center)  ;
    \draw[thick,blue, ->-=.5] (v3.center) -- (v4.center);
    \draw[thick, ->-=.5]  (v4.center) -- (v1.east) ;

\node at (7.5, -1) {\small $\mathfrak{m}_3(\bar{w}_i,w_i,\bar{w}_j) = (-1)^{|\bar{w}_j|} \bar{w}_j$};
\end{tikzpicture}
\end{center}
In addition, for each $i$, we have a small rectangle and rectangles with corners at $\bar{w}_0$.
\begin{center}
\begin{tikzpicture}[xscale =0.9]
    \tikzset{->-/.style={decoration={markings, mark=at position #1 with {\arrow{>}}},postaction={decorate}}}
   \node[] (v1) at (1,0) {};
    \node[] (v2) at (1,1) {};
    \node[] (v3) at (2,1) {};
    \node[] (v4) at (2,0) {};
\node[xshift=8pt] at (2,0) {\small $w_i$};
\node[xshift=8pt] at (2,1) {\small $\bar{w}_i$};
\node[xshift=-8pt] at (1,1) {\small $\bar{w}_i$};
\node[xshift=-8pt] at (1,0) {\small $\bar{w}_i$};

\draw[thick,black!40!green, ->-=.6]  (v2.center) --  (v1.north) ;
\draw[thick,red, ->-=.5] (v3.center) -- (v2.center);
\draw[thick,blue, ->-=.5] (v3.center) -- (v4.center);
\draw[thick, ->-=.5](v4.center) --  (v1.east) ; 

\node at (1.5, -1) {$\mathfrak{m}_3(\bar{w}_i,w_i,\bar{w}_i) = (-1)^{|\bar{w}_i|}\bar{w}_i$};

  \node[] (v1) at (7,0) {};
    \node[] (v2) at (7,1) {};
    \node[] (v3) at (8,1) {};
    \node[] (v4) at (8,0) {};
\node[xshift=8pt] at (8,0) {\small $\bar{w}_0$};
\node[xshift=8pt] at (8,1) {\small $\bar{w}_i$};
\node[xshift=-8pt] at (7,1) {\small $w_i$};
\node[xshift=-8pt] at (7,0) {\small $\bar{w}_0$};

\draw[thick,black!40!green, ->-=.6]  (v2.center) --  (v1.north) ;
\draw[thick,red, ->-=.5] (v2.center) -- (v3.center);
\draw[thick,blue, ->-=.5] (v3.center) -- (v4.center);
\draw[thick, ->-=.5](v4.center) --  (v1.east) ; 

\node at (7.5, -1) {$\mathfrak{m}_3(w_i,\bar{w}_i,\bar{w}_0) = (-1)^{|\bar{w}_i|+|\bar{w}_0|}\bar{w}_0$};

  \node[] (v1) at (13,0) {};
    \node[] (v2) at (13,1) {};
    \node[] (v3) at (14,1) {};
    \node[] (v4) at (14,0) {};
\node[xshift=8pt] at (14,0) {\small $\bar{w}_0$};
\node[xshift=8pt] at (14,1) {\small $w_i$};
\node[xshift=-8pt] at (13,1) {\small $\bar{w}_i$};
\node[xshift=-8pt] at (13,0) {\small $\bar{w}_0$};

\draw[thick,black!40!green, ->-=.6]  (v2.center) --  (v1.north) ;
\draw[thick,red, ->-=.5] (v3.center) -- (v2.center);
\draw[thick,blue, ->-=.5] (v3.center) -- (v4.center);
\draw[thick, ->-=.5](v4.center) --  (v1.east) ; 

\node at (13.5, -1) {$\mathfrak{m}_3(\bar{w}_i,w_i,\bar{w}_0) = (-1)^{|\bar{w}_0|}\bar{w}_0$};
\end{tikzpicture}
\end{center}
Here is the general recipe to compute $hom(L,L)$ for a general immersion $\gamma: S^1 \to L \subset \Sigma$, that gives a finitely computable $A_\infty$ structure. Firstly, one has to decide where to put the generators $w_0, \bar{w}_0$. Then, we get a signed Gauss word using the orientation of $L$. Then, we get all the higher products in $\Sigma_{\gamma}$ as in Definition \ref{newalg}. Next, we want to put in the disk regions that are bounding $L$ in $\Sigma$. When we include those regions, $L$ begins to bound immersed polygons (which are all regular due to automatic transversality, \cite[Lemma 13.2]{SeidelBook}). Note that there are obvious disks coming from the components of $\Sigma \setminus L$, but the union of those might lead to even more polygons bounding $L$. There are only finitely many such polygons (but we should not miss any!). Suppose we have such a disk with corners $(v_k,v_{k-1},\ldots, v_0)$ in clockwise order. Then we get $k+1$ higher products
\begin{align*}
  \mathfrak{m}_{k}(v_k,v_{k-1},\ldots, v_1) &=  \pm \bar{v}_0 \\
  \mathfrak{m}_{k}(v_0,v_{k},\ldots, v_2) &=  \pm \bar{v}_1 \\
                                             &\vdots \\
  \mathfrak{m}_{k}(v_{k-1},v_{k-2},\ldots, v_0) &=  \pm \bar{v}_k 
\end{align*}
If the disk has an edge passing through $w_0,\bar{w}_0$ in the counter-clockwise order, we get additional contributions of the form:
\begin{align*}
\mathfrak{m}_{k+1}(v_k,v_{k-1},\ldots, v_0) &=  \pm w_0 \\
 \mathfrak{m}_{k+1}(v_{k-1},\ldots, v_0, \bar{w}_0) &=  \pm \bar{v}_k \\
          & \vdots \\
 \mathfrak{m}_{k+1}(\bar{w}_0, v_k, \ldots, v_1) &=  \pm \bar{v}_0 
\end{align*}
and
\[ \mathfrak{m}_{k+2}(v_k,v_{k-1},\ldots, v_0, \bar{w}_0) =  \pm w_0 \]

The signs are determined as in \cite[Figure 2]{Seidel}.

Finally, if we desire to end up with finitely many higher products, it is important to choose the placement of $\{w_0,\bar{w}_0\}$ so that there are no disks with an edge passing through $\bar{w}_0, w_0$ in the counter-clockwise order. Otherwise, we get infinitely many higher products, though there is still an easy combinatorial pattern for these. 

Let us recall that, given an $A_\infty$-algebra $\mathscr{A}$, a bounding cochain $\mathfrak{b} \in \mathscr{A}$ is a degree 1 element that satisfies the Maurer-Cartan equation:
\[ \mathfrak{m}_1(\mathfrak{b}) + \mathfrak{m}_2(\mathfrak{b},\mathfrak{b}) + \ldots  =0 \]
Given a bounding cochain, we can define a deformed $A_{\infty}$ algebra
 \[ \mathfrak{m}^{\mathfrak{b}}_i (v_i,v_{i-1},\ldots, v_1)= \sum_{j \geq i} \mathfrak{m}_j (\mathfrak{b},\ldots, \mathfrak{b},v_i,\mathfrak{b},\ldots, \mathfrak{b},v_{i-1},\mathfrak{b},\ldots,\ldots,\mathfrak{b},v_1,\mathfrak{b},\ldots, \mathfrak{b}) \] 

 Our goal is to compute the $A_\infty$ algebra $\mathscr{A} = hom(L,L)$ for various immersed Lagrangians $L$. Furthermore, we are interested in bounding cochains satisfying $\mathfrak{m}_1^{\mathfrak{b}}=0$, so that the endomorphism algebra of $(L,\mathfrak{b})$ has the same rank as the endomorphism algebra of $L$. Below, we generally refer to the degree 0 part of the endomorphism algebra of $(L,\mathfrak{b})$ as the deformed algebra. In addition, when we partially compactify the surface, we denote the corresponding deformation variables by $s_1,s_2,\ldots$ and write $\{ \mathfrak{m}^{\mathfrak{b},s}_i \}$ for the deformed $A_\infty$-structure. We again look for flat deformations, that is, we require $\mathfrak{m}_1^{\mathfrak{b},s} =0$.  

Let us first explain that if we only use deformations coming from bounding cochains, we get precisely radical square zero algebras.
 
\begin{prop}\label{proprad2} Suppose that $A$ is any finite-dimensional basic algebra over an algebraically closed field $k$ such that its radical satisfies $\mathrm{rad}(A)^2 =0$. Then $A$ can be realized as the degree-0 endomorphism algebra of an immersed curve $\gamma: S^1 \to L \subset \Sigma_\gamma$ equipped with a bounding cochain $\mathfrak{b}$. Conversely, for any bounding cochain $\mathfrak{b}$ with $\mathfrak{m}_1^{\mathfrak{b}}=0$, the endomorphism algebra of any $(L, \mathfrak{b})$ in $\mathcal{F}(\Sigma_\gamma)$ is a radical square zero algebra. 
\end{prop}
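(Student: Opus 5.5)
The plan is to make both directions completely explicit, using the finite $A_\infty$-structure of Definition \ref{newalg} (valid by Proposition \ref{ainf}). Since $\mathfrak{m}_i\neq 0$ only for $i=2,3$, every deformed operation $\mathfrak{m}^{\mathfrak{b}}_i$ is a finite sum. I fix a grading with $|w_i|\in\{0,1\}$ for $i\geq 1$. The degree-$0$ part then has basis $w_0$ together with the $w_i$ (or $\bar w_i$) of degree $0$. The degree-$1$ part has basis $\bar w_0$ together with the partners of the degree-$0$ generators.

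\textbf{Maurer--Cartan is automatic.} Take a general $\mathfrak{b}=\sum_i b_i u_i$, where $u_i$ is the degree-$1$ member of the pair $\{w_i,\bar w_i\}$. I will first show that $\mathfrak{m}_2(\mathfrak{b},\mathfrak{b})$ and $\mathfrak{m}_3(\mathfrak{b},\mathfrak{b},\mathfrak{b})$ vanish identically. A nonzero $\mathfrak{m}_2$ or $\mathfrak{m}_3$ on non-unit generators requires a matching pair $\{w_i,\bar w_i\}$ among its inputs, and the two members of such a pair have degrees summing to $1$, so they cannot both have degree $1$. (I include $\bar w_0$ among the allowed components only if harmless; it only appears as the last input of $\mathfrak{m}_3$, after a matching pair, so it is also killed.)

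\textbf{Flatness.} Next I compute $\mathfrak{m}^{\mathfrak{b}}_1$ on degree-$0$ generators:
\[
\mathfrak{m}_1^{\mathfrak{b}}(x)=\mathfrak{m}_2(\mathfrak{b},x)+\mathfrak{m}_2(x,\mathfrak{b})+\mathfrak{m}_3(\mathfrak{b},\mathfrak{b},x)+\mathfrak{m}_3(\mathfrak{b},x,\mathfrak{b})+\mathfrak{m}_3(x,\mathfrak{b},\mathfrak{b}).
\]
Here the $\mathfrak{m}_2$-terms produce $\pm b_x\bar w_0$, and the $\mathfrak{m}_3$-terms are absorption terms. Imposing $\mathfrak{m}_1^{\mathfrak{b}}=0$ gives a short list of linear and quadratic conditions on the $b_i$.

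\textbf{The deformed product.} For degree-$0$ non-unit $x,y$,
\[
\mathfrak{m}_2^{\mathfrak{b}}(x,y)=\mathfrak{m}_3(\mathfrak{b},x,y)+\mathfrak{m}_3(x,\mathfrak{b},y)+\mathfrak{m}_3(x,y,\mathfrak{b}),
\]
since $\mathfrak{m}_2(x,y)=0$. Each surviving term is a matching pair, consisting of $x$ or $y$ together with the component of $\mathfrak{b}$ along its partner, absorbing the remaining input. Using the Right/Left Absorption rules extracted in the proof of Proposition \ref{ainf}, I will show the following.
\begin{itemize}
\item $x\cdot y\in k x+k y$, with coefficients $\pm b_x$ or $\pm b_y$. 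Which terms appear is dictated by the relative position of the signed letters in the Gauss word.
\item In particular $x\cdot x=\pm b_x x$.
\end{itemize}

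\textbf{Converse.} Let $I$ be the set of generators $x$ with $b_x\neq 0$, and rescale to $e_x=\pm x/b_x$. The $e_x$ for $x\in I$ are orthogonal-type idempotents, and the Gauss-word order will make them orthogonal after a triangular change of basis. Together with $1-\sum e_x$ they give a complete set of idempotents. The remaining generators (those with $b_y=0$) satisfy $y\cdot y'=0$, because no term survives. Each such $y$ satisfies $e_x y,\ y e_x\in\{0,y\}$. Hence the span of the $b=0$ generators is a two-sided ideal $N$ with $N^2=0$ and $A/N$ semisimple (a product of copies of $k$). So $\mathrm{rad}(A)=N$ and $\mathrm{rad}(A)^2=0$.

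\textbf{Realization.} A basic algebra with $\mathrm{rad}^2=0$ is $kQ/J^2$ for a quiver $Q$ with $n$ vertices and $m$ arrows. I take a curve with $r-1=(n-1)+m$ double points. There are $n-1$ ``vertex'' letters, carrying a nonzero $b$ and a grading making them degree $0$, and $m$ ``arrow'' letters with $b=0$.

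The point is to design the signed Gauss word so that each arrow $a\colon i\to j$ is left-absorbed by exactly the idempotent for $j$ and right-absorbed by exactly the idempotent for $i$. The last vertex is handled as $1-\sum e$. The four subinterval types $w_i^\pm\ldots w_j^\pm$ in Definition \ref{newalg} decide which absorption is present. So I intend to place all vertex letters in a fixed order and insert the two occurrences $a^+,a^-$ of each arrow letter at positions relative to $e_i^\pm$, $e_j^\pm$ chosen accordingly. Since any cyclic signed word is realized by some immersion in its filling surface, no planarity constraint arises.

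\textbf{Main obstacle.} I expect the hardest step to be simultaneously arranging the correct absorption pattern for all arrows and checking the flatness conditions $\mathfrak{m}_1^{\mathfrak{b}}=0$ together with the orthogonality of the vertex idempotents. This requires careful bookkeeping with the ordering rules and signs. I would first treat a single vertex with loops, then the linear quiver $\bullet\to\bullet$, and then generalize by concatenating blocks of the word.
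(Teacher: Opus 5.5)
The converse and the realization each have a gap. Your Maurer--Cartan remark is correct: no two degree-$1$ inputs form a matching pair. Your formula for $\mathfrak{m}_2^{\mathfrak b}$ on degree-$0$ generators is also correct, and your ideal $N$ with $N^2=0$ is the paper's $V_0$.

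\textbf{The converse.} You claim that $A/N$ is a product of copies of $k$, justified only by saying the Gauss-word order makes the rescaled $e_x$ orthogonal after a triangular change of basis. You never use the hypothesis $\mathfrak{m}_1^{\mathfrak b}=0$, and without it the claim fails even when the product is associative. Take the word $1^-2^+1^+2^-$ with $|w_1|=|w_2|=0$ and $\mathfrak b=t_1\bar w_1+t_2\bar w_2$, $t_1t_2\neq 0$. Definition~\ref{newalg} gives
\[
w_1^2=t_1w_1,\quad w_2^2=-t_2w_2,\quad w_1w_2=t_1w_2,\quad w_2w_1=-t_2w_1 .
\]
Setting $e_1=w_1/t_1$ and $e_2=-w_2/t_2$, these become $e_1e_2=e_2$ and $e_2e_1=e_1$. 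Here $N=0$, yet the algebra is the upper triangular $2\times 2$ matrices, with $e_1-e_2$ spanning the radical. Only flatness excludes this case: the $\bar w_2$-component of $\mathfrak{m}_1^{\mathfrak b}(w_1)$ is $t_1t_2$. The missing step, which is the core of the paper's argument, goes as follows.
\begin{itemize}
\item For $k\neq l$, the $y_l$-component of $\mathfrak{m}_1^{\mathfrak b}(x_k)$ is $t_kt_lE_{kl}$, where $E_{kl}$ is a signed count of the four configurations $k^{\pm}\ldots l^{\pm}$.
\item Checking term by term against Definition~\ref{newalg}, $E_{kl}$ equals the coefficient of $t_kx_l$ in $x_kx_l$ minus its coefficient in $x_lx_k$.
\item So $\mathfrak{m}_1^{\mathfrak b}=0$ with $t_k,t_l\neq 0$ forces $A/N$ to be commutative.
\item A commutative algebra spanned by the unit and idempotents is reduced, hence $k\times\cdots\times k$.
\end{itemize}
Only after this does your triangular orthogonalization go through.

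\textbf{The realization.} This part is a plan, not a proof. You leave the choice of word, and the checks of flatness and of orthogonality, as the ``main obstacle'', but that is where all the content lies. The paper's construction, with vertices $0,\dots,n$, is:
\begin{itemize}
\item for each $v\neq 0$, the block $B_v=v^-\,(\prod_{t(a)=v}a^+)\,(\prod_{s(c)=v}c^-)\,v^+$;
\item the blocks $B_1\cdots B_n$ concatenated disjointly, followed by the letters $a^+$ with $t(a)=0$ and $c^-$ with $s(c)=0$;
\item all $|w_k|=0$ and $\mathfrak b=\sum_{v\neq 0}\bar w_v$.
\end{itemize}
The verification then runs as follows.
\begin{itemize}
\item The $\bar w_v$-term of $\mathfrak{m}_1^{\mathfrak b}(w_v)$ cancels between the small rectangle and the $v^-\ldots v^+$ term.
\item Because the blocks are disjoint, the four cross terms in $\mathfrak{m}_1^{\mathfrak b}(w_u)$ cancel as $+1-1+1-1$, and $w_uw_v=0$ for $u\neq v$. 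Interleaved blocks such as $u^-v^-u^+v^+$ would give $E_{uv}\neq 0$ and break flatness.
\item $w_vw_a=w_a$ exactly when $a^+$ lies inside $B_v$, i.e.\ $t(a)=v$; and $w_aw_v=w_a$ exactly when $a^-$ lies inside $B_v$, i.e.\ $s(a)=v$.
\item Products of two arrow generators vanish because there is no matching pair.
\item The remaining idempotent is $e_0=w_0-\sum_v w_v$.
\end{itemize}
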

\begin{proof}  By hypothesis, $A$ is isomorphic to a quiver path algebra modulo the ideal generated by all paths of length $\ge 2$. That is, $A \cong kQ / Q_{\ge 2}$. Let the vertices (orthogonal primitive idempotents) of $Q$ be $V = \{0, 1, \dots, n\}$. Let the arrows (basis of the radical) of $Q$ be $E = \{n+1, \dots, n+m\}$. Each arrow $a \in E$ has a specific source vertex $s(a) \in V$ and target vertex $t(a) \in V$. We will construct a signed Gauss word using exactly $r-1 = n+m$ crossings.

For each explicit vertex $v \in V \setminus \{0\}$, we define a contiguous ``block'' of letters $B_v$:
$$B_v = v^- \;\;\; \Big( \prod_{t(a)=v} a^+ \Big) \;\;\; \Big( \prod_{s(c)=v} c^- \Big) \;\;\; v^+$$ (The relative order of the $a^+$ and $c^-$ letters strictly inside the bracket $v^- \dots v^+$ does not matter). We define the signed Gauss word as the sequential concatenation of these vertex blocks, placing any arrows connected to the vertex 0 at the very end:
$$B_1 B_2 \dots B_n \;\;\; \Big( \prod_{t(a)=0} a^+ \Big) \;\;\; \Big( \prod_{s(c)=0} c^- \Big)$$ Because each arrow has exactly one source and one target, $a^+$ and $a^-$ each appear exactly once. Each vertex $v$ also appears exactly twice ($v^-$ and $v^+$). Thus, this is a valid signed Gauss word.

We assign the grading $|w_k| = 0$ for all $k$. This maps the degree-0 generators to $x_k = w_k$. We define a bounding cochain by
\[ \mathfrak{b} = \sum_{v \in V \setminus \{0\} } \bar{w}_v. \]
We claim that $\mathfrak{m}_1^{\mathfrak{b}}=0$. By our grading choices, we have 
\[ \mathfrak{m}_1^{\mathfrak{b}}(w_k) = \mathfrak{m}_3(w_k, \mathfrak{b}, \mathfrak{b}) + \mathfrak{m}_3(\mathfrak{b}, w_k, \mathfrak{b}) + \mathfrak{m}_3(\mathfrak{b}, \mathfrak{b}, w_k)\]
For an arrow $a$, since $\bar{w}_a$ does not appear in $\mathfrak{b}$, no matching pair is possible: $\mathfrak{m}_1^{\mathfrak{b}}(w_a) = 0$. For a vertex $v$, $\mathfrak{m}_1^{\mathfrak{b}}(w_v)$ contains a $-\bar{w}_v$ term coming from $\mathfrak{m}_3(\bar{w}_v, w_v, \bar{w}_v) = - \bar{w}_v$ obtained from the small rectangle. On the other hand, we also have $\mathfrak{m}_3(\bar{w}_v, \bar{w}_v, w_v) = \bar{w}_v$ because, by construction of the Gauss word, we have the subinterval $v^- \dots v^+$. Hence, we have a cancellation. Finally, for cross terms $u \neq v$, the coefficient of $\bar{w}_v$ in $\mathfrak{m}_1^{\mathfrak{b}}(w_u)$ relies on four subintervals: $u^+ \dots v^-$, $u^+ \dots v^+$, $u^- \dots v^+$, and $u^- \dots v^-$. Since blocks $B_u$ and $B_v$ are disjoint, these either all exist (if $u$ precedes $v$) or none exist. The signs of these four $\mathfrak{m}_3$ evaluations yield exactly $+1 -1 +1 -1 = 0$. Thus $\mathfrak{m}_1^{\mathfrak{b}} = 0$.

Next, we calculate the algebra structure. For $v \in V \setminus \{0 \}$. The subinterval $v^- \dots v^+$ exists, yielding \[ \mathfrak{m}_2^{\mathfrak{b}}(w_v,w_v)= w_v.\] For $u \neq v$, if $B_u$ precedes $B_v$, both $u^- \dots v^+$ and $u^+ \dots v^+$ exist. These evaluate to $w_v$ and $-w_v$, perfectly cancelling. (If $B_u$ is after $B_v$, neither exists, again yielding $0$). Thus, $\{w_v\}_{v \in V}$ forms a set of mutually orthogonal primitive idempotents. Finally, we define the idempotent $e_0 = w_0 - \sum w_v$.

For a vertex $v \in V$ and arrow $a \in E$, left-multiplication $\mathfrak{m}_2^{\mathfrak{b}}(w_v,w_a)$ evaluates to $w_a$ if $v^- \dots a^+$ subinterval exists, and $-w_a$ if $v^+ \dots a^+$ exists. This coefficient is exactly $1$ if and only if $a^+$ is located strictly between $v^-$ and $v^+$. By our construction of the Gauss word, this happens if and only if $a^+$ was placed inside block $B_v$, which means $t(a) = v$. If $t(a) = 0$, $a^+$ is at the end, so $\mathfrak{m}_2^{\mathfrak{b}}(w_v, w_a) = 0$ for all $v$, forcing $\mathfrak{m}_2^{\mathfrak{b}}(e_0, w_a) = w_a$. Similarly, the right-action yields $\mathfrak{m}_2^{\mathfrak{b}}(w_a ,w_v) = w_a$ if and only if $a^-$ is strictly between $v^-$ and $v^+$, which is equivalent to $s(a) = v$.

Finally, we check that radical square is zero. For any two arrows $a, c \in E$, the product $\mathfrak{m}_2^{\mathfrak{b}}(w_a, w_c)$ involves $\mathfrak{m}_3$ evaluations with $\mathfrak{b}$. Since $\mathfrak{b}$ contains only $\bar{w}_v$, the inputs $\{w_a, w_c, \bar{w}_v\}$ lack any matching pair. Thus,  $\mathfrak{m}_2^{\mathfrak{b}}(w_a, w_c) =0$.

We have reproduced the vertices $V$, the arrows $E$, the source/target structures, and the generated algebra is exactly isomorphic to $A \cong kQ / Q_{\ge 2}$.

Conversely, suppose that $\mathfrak{b}$ is a bounding cochain with $\mathfrak{m}_1^{\mathfrak{b}}=0$. Write $1=x_0, x_1,\ldots, x_{r-1}$ for degree 0 generators, and $\mathfrak{b} = \sum_{k=1}^{r-1 } t_k y_k$. Recall that
\[ \mathfrak{m}_2^{\mathfrak{b}}(x_i,x_j) = \mathfrak{m}_2(x_i, x_j) + \mathfrak{m}_3(x_i, x_j, \mathfrak{b}) + \mathfrak{m}_3(x_i, \mathfrak{b}, x_j) + \mathfrak{m}_3(\mathfrak{b}, x_i, x_j).\]
By definition, $\mathfrak{m}_2(x_i, x_j) = 0$ as $\mathfrak{m}_2$ evaluates non-trivially only if the inputs form a matching pair $(w_k, \bar{w}_k)$, which requires one degree-0 and one degree-1 element. $\mathfrak{m}_3$ evaluates non-trivially only when exactly two adjacent inputs form a matching pair, which then ``absorbs'' the third element. Since $x_i$ and $x_j$ cannot match each other, the match must happen between one of them and a $y_k$ from the bounding cochain $\mathfrak{b}$. So $\mathfrak{b}$ must contain $t_i y_i$. This matching pair ``absorbs'' the remaining element $x_j$, outputting a multiple of $x_j$. Likewise, $x_j$ can match with $y_j$ (supplying $t_j$), which absorbs $x_i$ and outputs a multiple of $x_i$. Therefore, 
\[ \mathfrak{m}_2^{\mathfrak{b}}(x_i,x_j) = C_{ij} t_i x_j + C_{ji}' t_j x_i \]
where $C_{ij}, C_{ji}' \in \mathbb{Z}$ depend on the subintervals connecting $i$ and $j$.

Let us partition our generators by defining a subspace $V_0 = \text{span}(x_k :  t_k = 0 \text{ for } k \geq 1)$.
If we multiply two elements $x_i, x_j \in V_0$, then $t_i = t_j = 0$. Substituting this into our formula gives $\mathfrak{m}_2^{\mathfrak{b}}(x_i,x_j) = 0$. Hence, $V_0^2 = 0$.

Furthermore, if we multiply an element $x_i \in V_0$ with an element $x_k \notin V_0$, then $t_i = 0$. The product becomes
\[  \mathfrak{m}_2^{\mathfrak{b}}(x_i,x_k) = C_{ki}' t_k x_i \in V_0.\]
Because $V_0$ is a two-sided ideal that squares to 0, it is a nilpotent ideal, meaning it is entirely contained within $\mathrm{rad}\, A$.

Consider the quotient algebra $A / V_0$, which is generated by the elements $x_k$ where $t_k \neq 0$ and the unit $x_0$. To find the multiplication rules here, we evaluate the differential:
\[ \mathfrak{m}_1^{\mathfrak{b}}(x_k) = \mathfrak{m}_2(\mathfrak{b}, x_k) + \mathfrak{m}_2(x_k, \mathfrak{b}) + \mathfrak{m}_3(x_k, \mathfrak{b}, \mathfrak{b}) + \mathfrak{m}_3(\mathfrak{b}, x_k, \mathfrak{b}) + \mathfrak{m}_3(\mathfrak{b}, \mathfrak{b}, x_k) \]
The $\mathfrak{m}_2$ terms evaluate strictly to multiples of $\bar{w}_0$. For the $y_l$ coefficients, we only look at the $\mathfrak{m}_3$ terms. Because $\mathfrak{m}_3$ requires a matching pair and the match must be exactly between $x_k$ and the $t_k y_k$ term from $\mathfrak{b}$, the third, remaining element must be some $t_l y_l$ from the other $\mathfrak{b}$. Thus, every non-zero $\mathfrak{m}_3$ term in $\mathfrak{m}_1^{\mathfrak{b}}(x_k)$ is proportional to $t_k t_l y_l$. We denote the coefficient by $E_{kl}$ as in
\[ \mathfrak{m}_1^{\mathfrak{b}}(x_k) = E_{kl} t_k t_l y_l + \ldots \]
The contributions to $E_{kl}$ can come from the following four terms:
\[ \mathfrak{m}_3 (x_k, y_k , y_l),\; \mathfrak{m}_3(y_k, x_k, y_l),\; \mathfrak{m}_3(y_l,x_k,y_k),\; \mathfrak{m}_3(y_l, y_k ,x_k) \]
and the contributions to $C_{kl}$ can come from the following two terms:
\[ \mathfrak{m}_3(x_k, y_k, x_l),\; \mathfrak{m}_3(y_k, x_k, x_l) \]
and the contributions to $C_{kl}'$ can come from the following two terms:
\[ \mathfrak{m}_3(x_l, x_k, y_k),\; \mathfrak{m}_3(x_l, y_k,x_k) \]
Now, from Definition \ref{newalg}, we see the following correspondence between these terms
\begin{align*}
  \mathfrak{m}_3(x_k,y_k, y_l) \neq 0 &\iff  \mathfrak{m}_3(x_l,x_k,y_k) \neq 0 \\
  \mathfrak{m}_3(y_k,x_k, y_l) \neq 0 &\iff  \mathfrak{m}_3(x_l,y_k,x_k) \neq 0 \\
  \mathfrak{m}_3(y_l ,x_k,y_k) \neq 0 &\iff  \mathfrak{m}_3(x_k,y_k,x_l) \neq 0 \\
  \mathfrak{m}_3(y_l , y_k,x_k) \neq 0 &\iff  \mathfrak{m}_3(y_k,x_k,x_l) \neq 0 
\end{align*}
Because we require that $\mathfrak{m}_1^{\mathfrak{b}}=0$, the coefficient $E_{kl} t_k t_l$ must vanish for all $l$. Since we are evaluating the quotient $A / V_0$, we know $t_k \neq 0$ and $t_l \neq 0$, so we have $E_{kl} = 0$. From the above correspondence, it follows that $C_{kl} = C_{kl}'$ and $C_{lk} = C_{lk}'$. As a result, we get 
\[ \mathfrak{m}_2^{\mathfrak{b}} (x_k,x_l) = \mathfrak{m}_2^{\mathfrak{b}}(x_l,x_k) \quad \text{for all } k \neq l \text{ in } A/V_0. \]
We also have that $\mathfrak{m}_2^{\mathfrak{b}}(x_k,x_k) = m_k t_kx_k$. Any finite-dimensional commutative algebra generated by such elements over an algebraically closed field is isomorphic to a product $k \times k \times \ldots \times k$. Therefore, the quotient $A/V_0$ is semisimple, meaning its radical is $0$. So, $\mathrm{rad}\, A = V_0 $. Since we had shown that $V_0^2 = 0$, we conclude that $(\mathrm{rad}\, A)^2 = 0$. \end{proof}

Hence, before the partial compactifications of $\Sigma_\gamma$, the deformed algebra always has radical square zero. Thus, we need the surface compactifications in order to ``unlock'' algebras with longer nilpotent chains like $k[x]/(x^3)$. 

\begin{example} As an example of a radical square zero algebra, let us consider the 3-Kronecker quiver, with vertices $V = \{0,1\}$ and three edges $E=\{2,3,4\}$ all have source 0 and target 1. The block $B_1$ associated with $v=1$ is $1^{-} 2^{+} 3^{+} 4^{+} 1^{+}$ and the signed Gauss word from our construction is
   \[ 1^{-} 2^{+} 3^{+} 4^{+} 1^{+} 2^{-} 3^{-} 4^{-} \]
   If we now consider $\mathfrak{b} = \bar{w}_1$, then we can calculate that the result gives the 3-Kronecker quiver.
\end{example}

 We next begin to work through the immersions with 3 or fewer nodes. We will not give full details in every case to avoid too much repetition. However, we provide a fully explicit description of all the $A_\infty$-products in most cases so that the reader can feed this to their favourite computer algebra system to check that what we calculated indeed satisfies the $A_\infty$ relations. In our experience, this seems to be a highly non-trivial (and satisfying) check as a single sign error or a missing polygon causes the violation of many of the $A_\infty$ relations.  

\subsection*{n=1 : \texorpdfstring{\protect\eightcurve}{} $1^+1^-$}
Here is the figure-eight curve and its canonical neighborhood $\Sigma$, which is a three-punctured sphere. In the figure, the left and right sides are identified. We labelled the three boundary components with the letters $m_1,m_2,s$, where $m_1,m_2$ are monogons, and $s$ is a bigon. The gradings are related by $|s| = 2 -2|w|$, $|m_1|=|m_2|=|w|$.  

\begin{center}
  \begin{tikzpicture}[xscale=0.9, yscale=0.8]
    \tikzset{->-/.style={decoration={markings, mark=at position #1 with {\arrow{>}}},postaction={decorate}}}

    \node[] (t1) at (0,0) {};
    \node[] (t2) at (0,4) {};
    \node[] (t3) at (10,4) {};
    \node[] (t4) at (10,0) {};

    \node[] (L0left) at (0,1) {};
    \node[] (L0mid)  at (5,2) {};
    \node[] (L0right) at (10,3) {};

    \node[] (L1left) at (0,3) {};
    \node[] (L1mid)  at (5,2) {};
    \node[] (L1right) at (10,1) {};

    \node[] (x1) at (4.5,2) {\small $w$}; 
    \node[] (x2) at (5, 2.3) {\small $\bar{w}$};

    \fill[black] (6.7,2.58) circle (2pt);    
    \fill[black] (8.2,2.88) circle (2pt);    

    \node[] (q) at (6.7,2.8) {\small $\bar{e}$};
    \node[] (e) at (8.2,3.1) {\small $e$};
    
    \draw[gray!90, thick, dashed] (t2.center) -- (t3.center);
    \draw[gray!90, thick, dashed] (t4.center) -- (t1.center);
    \draw[gray!90, thick] (t1.center) -- (t2.center);
    \draw[gray!90, thick] (t3.center) -- (t4.center); 

    \draw[thick, dashed] (1,2) circle (0.3cm);
    \node (t2) at (1,2) {$s$};

    \node (t1) at (5,4.2) {$m_1$};
    \node (t1) at (5,-0.4) {$m_2$};

    \draw[thick] (L0left.center) edge[bend right=10] (L0mid.center);
    \draw[thick] (L0mid.center) edge[bend left=10] (L0right.center);

    \draw[thick, ->-=.5] (L1left.center) to[bend left=10] (L1mid.center);
    \draw[thick] (L1mid.center) edge[bend right=10] (L1right.center);

  \end{tikzpicture}
\end{center}
Let us choose $|w|=0$. There are the following triple products:
\begin{align*}
   \mathfrak{m}_3(w, w, \bar{w}) &= -w \\
   \mathfrak{m}_3(w,\bar{w} ,\bar{w}) &= \bar{w} = - \mathfrak{m}_3(\bar{w},w,\bar{w}) \\
    \mathfrak{m}_3 (w,\bar{w},\bar{e}) &= \bar{e}  = - \mathfrak{m}_3 (\bar{w},w,\bar{e})  
\end{align*}
Compactifying the $s$ component, we get the additional products
\begin{align*}
\mathfrak{m}_2(w,w) &= s e\\
\mathfrak{m}_2(\bar{e},w) = -\mathfrak{m}_2(w,\bar{e}) &= s \bar{w}\\
\mathfrak{m}_3(w,w,\bar{e}) &= -s e
\end{align*}
Let $\mathfrak{b}= t \overline{w}$. We have $\mathfrak{m}_1^{\mathfrak{b},s}=0$, and the deformed algebra over $k[t,s]$ is given by
\[ \boxed{k[x]/ (x^2 + tx -s)} \]
In particular, note that by Example \ref{ex1}, every rank 2 algebra appears in this family.

\begin{rmk} There is a mirror symmetric description available. The mirror family (with respect to compactifying $s$) is given by $k[x,y,s]/(xy-s)$ and the mirror to the $t$-family of figure-eight Lagrangians corresponds to the module $k[x,y,s,t]/(x-y+t, xy-s)$. 
\end{rmk}

We have already obtained all possible two-dimensional algebras above; however, to illustrate some of the additional possible choices and complications, and to prepare ourselves for the more complicated examples, let us do another exercise on the figure-eight curve. Consider the Gauss word $1^-1^+$ and let us choose $|w|=1$. Then, the product is given by
\begin{multicols}{3}
\begin{itemize}[label={}]
\item  $\mathfrak{m}_2(e,e) = e$, 
\item  $\mathfrak{m}_2(w,e) = w$, 
\item  $\mathfrak{m}_2(e,w) = -w$, 
\item  $\mathfrak{m}_2(e,\bar{w}) = \bar{w}$, 
\item  $\mathfrak{m}_2(\bar{w},e) = \bar{w}$, 
\item  $\mathfrak{m}_2(e,\bar{e}) = -\bar{e}$, 
\item  $\mathfrak{m}_2(\bar{e},e) = \bar{e}$, 
\item  $\mathfrak{m}_2(w,\bar{w}) = \bar{e}$, 
\item  $\mathfrak{m}_2(\bar{w},w) = \bar{e}$. 
\end{itemize}
\end{multicols}
\vspace{-.1in}
and the triple products are 
\begin{align*}
   \mathfrak{m}_3(\bar{w}, w, w) &= -w \\
   \mathfrak{m}_3(\bar{w} ,\bar{w},w) &= \bar{w} \; = \mathfrak{m}_3(\bar{w},w,\bar{w}) \\
    \mathfrak{m}_3 (w,\bar{w},\bar{e}) &= - \bar{e}  = \mathfrak{m}_3 (\bar{w},w,\bar{e})  
\end{align*}
Now, we can no longer compactify $s$, as the winding number around $s$ is 0. However, the winding number around $m_1$ and $m_2$ are 1, and hence, they can each be compactified with a $\mathbb{Z}/2\mathbb{Z}$-orbifold point, and the grading extends across that. As a result, we get the following additional contributions:
\begin{multicols}{3}
\begin{itemize}[label={}]
\item $\mathfrak{m}_1(\bar{w}) = -m_1 w$,
\item $\mathfrak{m}_1(\bar{w}) = m_2 w$,
\item $\mathfrak{m}_2(\bar{w},\bar{w})= m_1 e$,
\item $\mathfrak{m}_2(\bar{w},\bar{e})= m_1 w$,
\item $\mathfrak{m}_2(\bar{e},\bar{w})= m_1 w$
\item $\mathfrak{m}_3 (\bar{e}, \bar{w},\bar{e}) = -m_1 w$
\item $\mathfrak{m}_3 (\bar{w}, \bar{w},\bar{e}) = -m_1 e$
\item $\mathfrak{m}_3 (\bar{w}, \bar{e},\bar{w}) = -m_1 e$
\item $\mathfrak{m}_4 (\bar{w}, \bar{e}, \bar{w}, \bar{e}) = m_1e$.  
\end{itemize}
\end{multicols}
\vspace{-.1in}
If we now let $\mathfrak{b}= tw + \lambda \bar{w}_0$,  then we have
\[ \mathfrak{m}_1^{\mathfrak{b},m_1,m_2}(\bar{w})= (-t^2 - m_1 + m_2 + 2m_1\lambda - m_1\lambda^2)w + 2(t - t\lambda) \bar{w}_0 \]
Writing $x=\overline{w}$, we get a flat deformation when either $t=0$ and $m_2 = m_1(1-\lambda)^2$, in which case the deformed algebra over $k[m_1,\lambda]$ is
\[ \boxed{k[x]/  (x^2 - m_1 (1-\lambda)^2)} \]
or if $\lambda=1$ and $m_2 = t^2$, then the deformation over $k[m_1,t]$ is given by
\[ \boxed{k[x] / (x^2 - 2tx)} \]
\subsection*{n=2 : \texorpdfstring{\tripleeye}{} $1^+2^-2^+1^-$}
This curve is immersed in a four-punctured sphere. The boundary components are $m_1,m_2,s_1, s_2$, where $m_1,m_2$ are monogons, $s_1$ is a bigon, and $s_2$ is a rectangle. The gradings are related by $|s_1| = |w_1| + |w_2|$, $|s_2| = 4- 2|w_1| -2|w_2|$, $|m_1|=|w_1|, |m_2|=|w_2|$. We do not give explicit computations in this case. 

\begin{center}
  \begin{tikzpicture}[xscale=0.9, yscale=0.8]

    \tikzset{->-/.style={decoration={markings, mark=at position #1 with {\arrow{>}}},postaction={decorate}}}

    \node[] (t1) at (0,0) {};
    \node[] (t2) at (0,4) {};
    \node[] (t3) at (10,4) {};
    \node[] (t4) at (10,0) {};

    \node[] (L0left) at (0,1) {};
    \node[] (L0mid)  at (6.6,2) {};
    \node[] (L0right) at (0,3) {};

    \node[] (L1left) at (10,3) {};
    \node[] (L1mid)  at (3.3,2) {};
    \node[] (L1right) at (10,1) {};

    \node[] (x1) at (3.9,2.7) {\small $w_1$}; 
    \node[] (x2) at (4.8, 3) {\small $\bar{w}_1$};

    \node[] (y1) at (3.8,1.3) {\small $w_2$}; 
    \node[] (y2) at (5.2, 1) {\small $\bar{w}_2$};

    \fill[black] (6.7,2.92) circle (2pt);    
    \fill[black] (8.2,2.97) circle (2pt);    

    \node[] (q) at (6.7,3.2) {\small $\bar{e}$};
    \node[] (e) at (8.2,3.2) {\small $e$};
    
    \draw[gray!90, thick, dashed] (t2.center) -- (t3.center);
    \draw[gray!90, thick, dashed] (t4.center) -- (t1.center);
    \draw[gray!90, thick] (t1.center) -- (t2.center);
    \draw[gray!90, thick] (t3.center) -- (t4.center); 

    \draw[thick, dashed] (1,2) circle (0.3cm);
    \node (s2) at (1,2) {$s_2$};

    \node (s1) at (5,4.2) {$m_1$};
    \node (s3) at (5,-0.4) {$m_2$};

    \draw[thick, dashed] (5,2) circle (0.3cm);

    \node (s4) at (5,2) {$s_1$};
    
    \draw[thick] (L0left.center)  .. controls +(0:1) and +(270:1) .. (L0mid.center);
    \draw[thick, ->-=.4] (L0right.center)  .. controls +(0:1) and +(90:1) .. (L0mid.center);

    \draw[thick] (L1left.center)  .. controls +(180:1) and +(90:1) .. (L1mid.center);
    \draw[thick] (L1mid.center)  .. controls +(270:1) and +(180:1) .. (L1right.center);

  \end{tikzpicture}
\end{center}
\subsection*{n=2 : \texorpdfstring{\protect\eightinner}{} $1^+2^+2^-1^-$}
This curve is immersed in a four-punctured sphere. The boundary components are $m_1, m_2, s_1, s_2$, where $m_1$ and $m_2$ are monogons, and $s_1$ and $s_2$ are triangles. The gradings are related by $|s_1|= 2 + |w_1| - 2|w_2|$, $|s_2|= 2 - 2|w_1| + |w_2|$, $|m_1| = |w_1|$ and $|m_2|=|w_2|$.   
\begin{center}
  \begin{tikzpicture}[xscale=0.9, yscale=0.8]

    \tikzset{->-/.style={decoration={markings, mark=at position #1 with {\arrow{>}}},postaction={decorate}}}

    \node[] (t1) at (0,0) {};
    \node[] (t2) at (0,4) {};
    \node[] (t3) at (10,4) {};
    \node[] (t4) at (10,0) {};

    \node[] (L0left) at (0,1.9) {};
    \node[] (L0right) at (10,0.4) {};

    \node[] (L1left) at (0,3.4) {};
    \node[] (L1right) at (10,1.9) {};

    \node[] (L2left) at (0,0.4) {};
    \node[] (L2right) at (10,3.4) {};

    \draw[gray!90, thick, dashed] (t2.center) -- (t3.center);
    \draw[gray!90, thick, dashed] (t4.center) -- (t1.center);
    \draw[gray!90, thick] (t1.center) -- (t2.center);
    \draw[gray!90, thick] (t3.center) -- (t4.center); 


    \draw[thick, dashed] (3,2.2) circle (0.3cm);
    \node (s1) at (3,2.2) {$s_2$};
    \draw[thick, dashed] (8,1.5) circle (0.3cm);
    \node (s2) at (8,1.5) {$s_1$};

    \node (s1) at (5,4.2) {$m_1$};
    \node (s3) at (5,-0.4) {$m_2$};

    \draw[thick] (L0left.center) .. controls +(0:1) and +(180:1) .. (L0right.center);

     \draw[thick] (L2left.center) .. controls +(0:1) and +(180:1) .. (L2right.center);

    \draw[thick, ->-=.5] (L1left.center) .. controls +(0:1) and +(180:1) .. (L1right.center);

    \fill[black] (7.4,2.7) circle (2pt);    
    \fill[black] (8.4,3) circle (2pt);    

    \node[] (q) at (7.4,3) {\small $\bar{e}$};
    \node[] (e) at (8.3,3.2) {\small $e$};
    

    \node[] (x1) at (2.8,1.35) {\small $w_2$}; 
    \node[] (y1) at (5.8,2.35) {\small $w_1$}; 

  \end{tikzpicture}
\end{center}
  \begin{multicols}{3}\small
\begin{itemize}[label={}]
\item $\mathfrak{m}_3(w_{1},w_{1},\bar{w}_{1})= - w_{1}$
\item $\mathfrak{m}_3(w_{1},w_{2},\bar{w}_{2})= - w_{1}$
\item $\mathfrak{m}_3(w_{1},\bar{w}_{1},w_{2})= - w_{2}$
\item $\mathfrak{m}_3(w_{1},\bar{w}_{1},\bar{w}_{0})=\bar{w}_{0}$
\item $\mathfrak{m}_3(w_{1},\bar{w}_{1},\bar{w}_{1})=\bar{w}_{1}$
\item $\mathfrak{m}_3(w_{1},\bar{w}_{1},\bar{w}_{2})=\bar{w}_{2}$
\item $\mathfrak{m}_3(w_{1},\bar{w}_{2},w_{2})=w_{1}$
\item $\mathfrak{m}_3(w_{2},w_{1},\bar{w}_{1})= - w_{2}$
\item $\mathfrak{m}_3(w_{2},w_{2},\bar{w}_{2})= - w_{2}$
\item $\mathfrak{m}_3(w_{2},\bar{w}_{2},\bar{w}_{0})=\bar{w}_{0}$
\item $\mathfrak{m}_3(w_{2},\bar{w}_{2},\bar{w}_{1})=\bar{w}_{1}$
\item $\mathfrak{m}_3(w_{2},\bar{w}_{2},\bar{w}_{2})=\bar{w}_{2}$
\item $\mathfrak{m}_3(\bar{w}_{1},w_{1},\bar{w}_{0})= - \bar{w}_{0}$
\item $\mathfrak{m}_3(\bar{w}_{1},w_{1},\bar{w}_{1})= - \bar{w}_{1}$
\item $\mathfrak{m}_3(\bar{w}_{2},w_{1},\bar{w}_{1})= - \bar{w}_{2}$
\item $\mathfrak{m}_3(\bar{w}_{2},w_{2},\bar{w}_{0})= - \bar{w}_{0}$
\item $\mathfrak{m}_3(\bar{w}_{2},w_{2},\bar{w}_{1})= - \bar{w}_{1}$
\item $\mathfrak{m}_3(\bar{w}_{2},w_{2},\bar{w}_{2})= - \bar{w}_{2}$
\end{itemize}
\end{multicols}

  Turning on the $s_1$-deformation, we get the following contributions:
  \begin{align*}
    \mathfrak{m}_2 (w_2,w_2) &= s_1w_1 \\
    \mathfrak{m}_2 (w_2,\bar{w}_1) &= -s_1\bar{w}_2 \\
    \mathfrak{m}_2 (\bar{w}_1,w_2) &= s_1\bar{w}_2 
  \end{align*}
    Let $\mathfrak{b} = t_1 \bar{w}_1 + t_2 \bar{w}_2 $. We have $\mathfrak{m}_1^{\mathfrak{b},s_1} =0$ for all $\mathfrak{b}$. Letting $(x_1,x_2)=(w_1,w_2)$, the deformed algebra over $k[t_1,t_2,s_1]$ is given by
    \[  \boxed{k[x_1,x_2] / (x_1^2 + t_1x_1, x_2^2 + t_2x_2-s_1x_1, x_1x_2 + t_1x_2)}  \]
    Let $\Delta = t_2^2 - 4s_1t_1$, then we have the following isomorphism types of algebras.
\[    
    \begin{tabular}{c|c}
\hline
$t_1 \neq 0, \Delta \neq 0$ & $k \times k \times k$ \\
($t_1 \neq 0, \Delta = 0$) or ($t_1 = 0, t_2 \neq 0$) & $k \times k[x]/(x^2)$ \\
$t_1 = 0, t_2 = 0, s_1 \neq 0$ & $k[x]/(x^3)$ \\
$t_1 = 0, t_2 = 0, s_1 = 0$ & $k[x,y]/(x,y)^2$\\
\hline
\end{tabular}
\]
In particular, every isomorphism type in the commutative component of $\mathbf{Alg}_3$ is recovered. Therefore, turning on the $s_2$-deformation will not give rise to new algebras, although the full computation is still doable. We only record the relevant additional products for our purpose:
\begin{align*}      \mathfrak{m}_2(w_1,w_1) &=s_2w_2  \\
                    \mathfrak{m}_2(w_2,w_1) &= s_1s_2e \\
                   \mathfrak{m}_2(w_1,w_2) & = s_1 s_2 e  \\
        \mathfrak{m}_3 (w_1, \bar{w}_2,w_1) &= s_2e  
\end{align*}
One still has $\mathfrak{m}_1^{\mathfrak{b},s_1,s_2}=0$, and the deformed algebra over $k[t_1,t_2,s_1,s_2]$ is
\[  \boxed{k[x_1,x_2] / (x_1^2 + t_1x_1 - s_2x_2 - s_2 t_2, x_2^2 + t_2x_2-s_1x_1, x_1x_2 + t_1x_2 - s_1s_2 )}  \]
 The behaviour of this algebra is governed by the binary cubic
\[ f(x,y)= -s_2 x^3 + t_1 x^2 y + t_2 xy^2 + s_1 y^3 \]
A remarkable fact about this family is that (letting  $k=\mathbb{Z}$), it matches perfectly with the Delone-Faddeev classification of cubic rings (see \cite{Bhargava}) which shows that isomorphism classes of cubic rings (free of rank 3 over $\mathbb{Z}$) are parametrized by $GL_2(\mathbb{Z})$-equivalence classes of integral binary cubic forms. Indeed, if we let $\omega = x_1+t_1$ and $\theta = x_2$, the multiplication table given in \cite[Eq. 3]{Bhargava} matches with our family.

\begin{rmk} One can think of this in terms of mirror symmetry as follows. The binary cubic $f$ gives rise to a map of sheaves
\[ \mathcal{O}_{\mathbb{P}^1}(-3) \xlongrightarrow{f}  \mathcal{O}_{\mathbb{P}^1} \]
The image of this map defines the structure sheaf $\mathcal{O}_Z = \mathcal{O}_{\mathbb{P}^1} / f(\mathcal{O}_{\mathbb{P}^1}(-3))$ of a 0-dimensional scheme $Z$ and the cubic ring is the global functions on $Z$. Our Lagrangian immersion can be understood as the mirror dual to $\mathcal{O}_Z$. Though, this only works for $f \neq 0$, as the rank of $H^0(\mathcal{O}_{\mathbb{P}^1}/f(\mathcal{O}_{\mathbb{P}^1}(-3)))$ drops when $f=0$. One solution is to consider the hypercohomology of $\mathcal{O}_{\mathbb{P}^1}(-3) \xlongrightarrow{f} \mathcal{O}_{\mathbb{P}^1}$ as a complex in degrees $-1$ and $0$. This was pointed out by Deligne (in some letter!) and was further studied and generalized by Wood (see \cite{wood}). An approach that is more directly related is to consider a degeneration of $\mathbb{P}^1$ to a nodal curve mirror to the four-punctured sphere as in \cite{LPaus} or \cite{EvansLekili}.  
\end{rmk}

 \subsection*{n=2 : $1^+2^+1^-2^-$}
 This curve is immersed in a twice-punctured torus. In the figure, the left-right, and the top-bottom sides are identified. The boundary components are $s_1, s_2$, where $s_1$ is a bigon, and $s_2$ is a hexagon. The gradings are determined by $|s_1| = 2- |w_1| - |w_2|$ and $ |s_2| = 2 + |w_1| + |w_2|$.   
\begin{center}
  \begin{tikzpicture}[xscale=0.9, yscale=0.8]

    \tikzset{->-/.style={decoration={markings, mark=at position #1 with {\arrow{>}}},postaction={decorate}}}

    \node[] (t1) at (0,0) {};
    \node[] (t2) at (0,4) {};
    \node[] (t3) at (10,4) {};
    \node[] (t4) at (10,0) {};

    \node[] (L0left) at (0,1.9) {};
    \node[] (L0up)  at (6,4) {};
    \node[] (L0down) at (6, 0) {};
    \node[] (L0right) at (10,0.4) {};

    \node[] (L1left) at (0,3.3) {};
    \node[] (L1right) at (10,1.9) {};

    \node[] (L2left) at (0,0.4) {};
    \node[] (L2right) at (10,3.3) {};
       
    \draw[gray!90, thick] (t1.center) -- (t2.center) -- (t3.center) -- (t4.center) -- (t1.center);

   \draw[thick, dashed] (2.5,2) circle (0.3cm);
    \node (s2) at (2.5,2) {$s_2$};
    \draw[thick, dashed] (1,1.1) circle (0.3cm);
    \node (s1) at (1,1.1) {$s_1$};

    \draw[thick] (L2right.center) .. controls +(180:1) and +(330:1) .. (L0up.center);
    \draw[thick] (L1left.center) .. controls +(0:1) and +(180:1) .. (L0right.center);

    \draw[thick] (L0left.center) .. controls +(0:1) and +(150:1) .. (L0down.center) ;
   
    \draw[thick, ->-=.1] (L2left.center) .. controls +(0:1) and +(180:1) .. (L1right.center);
    
    \fill[black] (7.9,1.62) circle (2pt);    
    \fill[black] (8.7,1.74) circle (2pt);    

    \node[] (q) at (7.9,1.9) {\small $\bar{e}$};
    \node[] (e) at (8.7,2) {\small $e$};
    
    \node[] (x1) at (5.8,1.44) {\small $w_1$}; 
    \node[] (y1) at (3,1) {\small $w_2$}; 
  \end{tikzpicture}
\end{center}
The triple product contributions are as follows:
  \begin{align*}
    \mathfrak{m}_3(w_1,w_1,\bar{w}_1) = \mathfrak{m}_3(w_1,w_2,\bar{w}_2)   &= -w_1 \\ 
    \mathfrak{m}_3(w_1,\bar{w}_1,\bar{w}_1) = \mathfrak{m}_3(w_2,\bar{w}_2,\bar{w}_1) = - \mathfrak{m}_3(\bar{w}_1,w_1,\bar{w}_1) &= \bar{w}_1 \\
    -\mathfrak{m}_3(w_2,\bar{w}_1, w_1) =  \mathfrak{m}_3(w_1,\bar{w}_1,w_2) = \mathfrak{m}_3(w_2,w_2,\bar{w}_2) = \mathfrak{m}_3(w_2,w_1,\bar{w}_1) &= -w_2  \\
    -\mathfrak{m}_3(\bar{w}_1,w_1,\bar{w}_2) = -\mathfrak{m}_3(\bar{w}_2,w_1,\bar{w}_1) =  \mathfrak{m}_3(w_2, \bar{w}_2,\bar{w}_2) = \mathfrak{m}_3 (w_1,\bar{w}_1,\bar{w}_2) = -\mathfrak{m}_3 (\bar{w}_2,w_2,\bar{w}_2) &=\bar{w}_2 \\
    \mathfrak{m}_3 (w_1,\bar{w}_1,\bar{e}) =  \mathfrak{m}_3 (w_2,\bar{w}_2,\bar{e}) = - \mathfrak{m}_3 (\bar{w}_2,w_2,\bar{e})  = -  \mathfrak{m}_3 (\bar{w}_1,w_1,\bar{e}) &= \bar{e} 
  \end{align*}
  Let $\mathfrak{b} = t_1 \bar{w}_1 + t_2 \bar{w}_2$, then the condition that $\mathfrak{m}_1^{\mathfrak{b}} =0$ can be computed from
  \begin{align*} 
    \mathfrak{m}^{\mathfrak{b}}_1 (w_1) & = -t_1t_2 \cdot \bar{w}_2 \\
    \mathfrak{m}^{\mathfrak{b}}_1 (w_2) & = t_1t_2  \cdot\bar{w}_1  
\end{align*}
Hence, the deformed algebra over $k[t_1,t_2] / (t_1t_2)$ is given by
\[  \boxed{k\{w_1,w_2\} / (w_1^2 + t_1 w_1 , w_2^2 + t_2w_2, w_1w_2 + t_1 w_2 + t_2 w_1, w_2w_1)} \]
We give the isomorphism types of the algebras occurring in this family in the following table.
  \[
\begin{array}{c|c}
\hline 
t_1 \neq 0,\; t_2 = 0 
& $\begin{tikzcd}[ampersand replacement=\&, column sep=small]
{\scriptstyle\bullet} \arrow[r] \& {\scriptstyle\bullet}
\end{tikzcd}$ \\[6pt]
t_1 = 0,\; t_2 \neq 0 
& $\begin{tikzcd}[ampersand replacement=\&, column sep=small]
{\scriptstyle\bullet} \arrow[r] \& {\scriptstyle\bullet}
\end{tikzcd}$ \\[6pt]
t_1 = 0,\; t_2 = 0 
& k[w_1,w_2]/(w_1,w_2)^2 \\ \hline
\end{array}
\]
Turning on the $s_1$-deformation, we additionally get the following contributions:
\begin{align*} \mathfrak{m}_1 (w_1) &= -s_1\bar{w}_2 , \quad \mathfrak{m}_1 (w_2) = s_1 \bar{w}_1  \\
  \mathfrak{m}_2(w_1,w_2) &=s_1 e, \quad \mathfrak{m}_2(w_2,\bar{e}) = -s_1 \bar{w}_1 , \quad \mathfrak{m}_2(\bar{e},w_1) = s_1 \bar{w}_2, \quad \mathfrak{m}_3(w_1,w_2,\bar{e}) =-s_1 e 
\end{align*}
Hence, $\mathfrak{m}_1^{\mathfrak{b},s_1}(w_1) = -(s_1+t_1t_2) \bar{w}_2$ and $\mathfrak{m}_1^{\mathfrak{b},s_1}(w_2) = (s_1+t_1t_2) \bar{w}_1$. Thus, the deformed algebra over $k[t_1,t_2]$ is given by
\[  \boxed{k\{w_1,w_2\} / (w_1^2 + t_1 w_1 , w_2^2 + t_2w_2, w_1w_2 + t_1 w_2 + t_2 w_1 + t_1t_2, w_2w_1 ) } \]
This family maps to the component of $\mathbf{Alg}_3$ that contains  $\bullet \rightarrow \bullet$.

One can also try turning on $s_2$ in this example, but then there are infinitely many polygons to worry about.

\subsection*{n=3: \texorpdfstring{\protect\quadeye}{} $1^+2^-3^+3^-2^+1^-$}

This curve is immersed in a five-punctured sphere. The boundary components are $m_1, m_2$ and $s_1, s_2, s_3$, where $m_1$ and $m_2$ are monogons, and $s_1$ and $s_2$ are bigons, $s_3$ is a hexagon. The gradings are determined by $|s_1|= |w_1| + |w_2|$, $|s_2|=|w_2|+|w_3|$, $|s_3|= 6 - 2|w_1| - 2|w_2| - 2|w_3|$, and $|m_1|=|w_1|$, $|m_2|= |w_3|$.  
\begin{center}
  \begin{tikzpicture}[xscale=0.9, yscale=0.8]

    \tikzset{->-/.style={decoration={markings, mark=at position #1 with {\arrow{>}}},postaction={decorate}}}

    \node[] (t1) at (0,0) {};
    \node[] (t2) at (0,4) {};
    \node[] (t3) at (10,4) {};
    \node[] (t4) at (10,0) {};

    \node[] (L0left) at (0,0.5) {};
    \node[] (L00mid)  at (6,3) {};
    \node[] (L0mid)  at (6,1) {};
    \node[] (L0right) at (0,3.5) {};

    \node[] (L1left) at (10,3.5) {};
    \node[] (L11mid)  at (4,3) {};
    \node[] (L1mid)  at (4,1) {};
    \node[] (L1right) at (10,0.5) {};

    \node[] (x1) at (3.8,3.4) {\small $w_1$}; 

   \node[] (z1) at (4.3,2) {\small $w_2$}; 
  
    \node[] (y1) at (3.8,0.6) {\small $w_3$}; 

    \fill[black] (6.7,3.6) circle (2pt);    
    \fill[black] (8.2,3.59) circle (2pt);    

    \node[] (q) at (6.7,3.8) {\small $\bar{e}$};
    \node[] (e) at (8.2,3.78){\small $e$};
    
    \draw[gray!90, thick, dashed] (t2.center) -- (t3.center);
    \draw[gray!90, thick, dashed] (t4.center) -- (t1.center);
    \draw[gray!90, thick] (t1.center) -- (t2.center);
    \draw[gray!90, thick] (t3.center) -- (t4.center); 

    \draw[thick, dashed] (5,1.2) circle (0.3cm);
    \node (s2) at (5,1.2) {$s_2$};

    \node (s1) at (5,4.2) {$m_1$};
    \node (s3) at (5,-0.4) {$m_2$};

    \draw[thick, dashed] (5,2.8) circle (0.3cm);
    \node (s4) at (5,2.8) {$s_1$};

    \draw[thick, dashed] (1,2) circle (0.3cm);
    \node (s5) at (1,2) {$s_3$};
  
    \draw[thick] (L0left.center)  .. controls +(0:1) and +(270:1) .. (L0mid.center);
    \draw[thick, ->-=.3] (L0right.center)  .. controls +(0:1) and +(90:1) .. (L00mid.center);

    \draw[thick] (L1left.center)  .. controls +(180:1) and +(90:1) .. (L11mid.center);
    \draw[thick] (L1mid.center)  .. controls +(270:1) and +(180:1) .. (L1right.center);

    \draw[thick] (L0mid.center)  .. controls +(90:1) and +(270:1) .. (L11mid.center);
    \draw[thick] (L00mid.center)  .. controls +(270:1) and +(90:1) .. (L1mid.center);

  \end{tikzpicture}
\end{center}
\begin{multicols}{3}\small
\begin{itemize}[label={}]
\item $\mathfrak{m}_3(w_{1},w_{1},\bar{w}_{1})= - w_{1}$
\item $\mathfrak{m}_3(w_{1},w_{2},\bar{w}_{2})= - w_{1}$
\item $\mathfrak{m}_3(w_{1},w_{3},\bar{w}_{3})= - w_{1}$
\item $\mathfrak{m}_3(w_{1},\bar{w}_{1},w_{2})= - w_{2}$
\item $\mathfrak{m}_3(w_{1},\bar{w}_{1},w_{3})= - w_{3}$
\item $\mathfrak{m}_3(w_{1},\bar{w}_{1},\bar{w}_{0})=\bar{w}_{0}$
\item $\mathfrak{m}_3(w_{1},\bar{w}_{1},\bar{w}_{1})=\bar{w}_{1}$
\item $\mathfrak{m}_3(w_{1},\bar{w}_{1},\bar{w}_{2})=\bar{w}_{2}$
\item $\mathfrak{m}_3(w_{1},\bar{w}_{1},\bar{w}_{3})=\bar{w}_{3}$
\item $\mathfrak{m}_3(w_{1},\bar{w}_{2},w_{2})=w_{1}$
\item $\mathfrak{m}_3(w_{1},\bar{w}_{3},w_{3})=w_{1}$
\item $\mathfrak{m}_3(w_{2},w_{1},\bar{w}_{1})= - w_{2}$
\item $\mathfrak{m}_3(w_{2},\bar{w}_{2},\bar{w}_{0})=\bar{w}_{0}$
\item $\mathfrak{m}_3(w_{2},\bar{w}_{2},\bar{w}_{1})=\bar{w}_{1}$
\item $\mathfrak{m}_3(w_{3},w_{1},\bar{w}_{1})= - w_{3}$
\item $\mathfrak{m}_3(w_{3},w_{3},\bar{w}_{3})= - w_{3}$
\item $\mathfrak{m}_3(w_{3},\bar{w}_{2},w_{2})=w_{3}$
\item $\mathfrak{m}_3(w_{3},\bar{w}_{3},w_{2})= - w_{2}$
\item $\mathfrak{m}_3(w_{3},\bar{w}_{3},\bar{w}_{0})=\bar{w}_{0}$
\item $\mathfrak{m}_3(w_{3},\bar{w}_{3},\bar{w}_{1})=\bar{w}_{1}$
\item $\mathfrak{m}_3(w_{3},\bar{w}_{3},\bar{w}_{3})=\bar{w}_{3}$
\item $\mathfrak{m}_3(\bar{w}_{1},w_{1},\bar{w}_{0})= - \bar{w}_{0}$
\item $\mathfrak{m}_3(\bar{w}_{1},w_{1},\bar{w}_{1})= - \bar{w}_{1}$
\item $\mathfrak{m}_3(\bar{w}_{2},w_{1},\bar{w}_{1})= - \bar{w}_{2}$
\item $\mathfrak{m}_3(\bar{w}_{2},w_{2},w_{2})=w_{2}$
\item $\mathfrak{m}_3(\bar{w}_{2},w_{2},w_{3})=w_{3}$
\item $\mathfrak{m}_3(\bar{w}_{2},w_{2},\bar{w}_{0})= - \bar{w}_{0}$
\item $\mathfrak{m}_3(\bar{w}_{2},w_{2},\bar{w}_{1})= - \bar{w}_{1}$
\item $\mathfrak{m}_3(\bar{w}_{2},w_{2},\bar{w}_{2})= - \bar{w}_{2}$
\item $\mathfrak{m}_3(\bar{w}_{2},w_{2},\bar{w}_{3})= - \bar{w}_{3}$
\item $\mathfrak{m}_3(\bar{w}_{2},w_{3},\bar{w}_{3})= - \bar{w}_{2}$
\item $\mathfrak{m}_3(\bar{w}_{2},\bar{w}_{2},w_{2})=\bar{w}_{2}$
\item $\mathfrak{m}_3(\bar{w}_{2},\bar{w}_{3},w_{3})=\bar{w}_{2}$
\item $\mathfrak{m}_3(\bar{w}_{3},w_{1},\bar{w}_{1})= - \bar{w}_{3}$
\item $\mathfrak{m}_3(\bar{w}_{3},w_{3},w_{2})=w_{2}$
\item $\mathfrak{m}_3(\bar{w}_{3},w_{3},\bar{w}_{0})= - \bar{w}_{0}$
\item $\mathfrak{m}_3(\bar{w}_{3},w_{3},\bar{w}_{1})= - \bar{w}_{1}$
\item $\mathfrak{m}_3(\bar{w}_{3},w_{3},\bar{w}_{3})= - \bar{w}_{3}$
\item $\mathfrak{m}_3(\bar{w}_{3},\bar{w}_{2},w_{2})=\bar{w}_{3}$
\end{itemize}
\end{multicols}
This gives $\mathfrak{m}_1^{\mathfrak{b}} = 0$ for $\mathfrak{b} = t_1 \bar{w}_1 + t_2 \bar{w}_2 + t_3 \bar{w}_3$. We let $(x_1,x_2,x_3) = (w_1,w_2,w_3)$. The deformed algebra over $k[t_1,t_2,t_3]$ is
\[\boxed{k[x_1,x_2,x_3] / (x_1^2 + t_1x_1, x_2^2 - t_2 x_2 , x_3^2 + t_3x_3, x_1 x_2 + t_1 x_2, x_1 x_3 + t_1x_3, x_2 x_3 - t_2 x_3)}\]
We give the isomorphism types of the algebras occurring in this family in the following table.  \[
\begin{array}{c|c}
  \hline
\text{All } t_i \neq 0 & k \times k \times k \times k \\ 
\text{Exactly one } t_i = 0 & k \times k \times k[x]/(x^2) \\ 
t_1 = t_3 = 0,\ t_2 \neq 0  & k[x]/(x^2) \times k[y]/(y^2) \\ 
t_1 = t_2 = 0 \text{ or } t_2 = t_3 = 0 & k \times k[x,y]/(x,y)^2 \\ 
\text{All } t_i = 0 & k[x,y,z]/(x,y,z)^2 \\ 
\hline
\end{array}
\]
\subsection*{n=3 : \texorpdfstring{\protect\stabsym}{} $1^+2^-3^-3^+2^+1^-$}

This curve is immersed in a five-punctured sphere. The boundary components are $m_1, m_2$ and $s_1, s_2, s_3$, where $m_1$ and $m_2$ are monogons, and $s_1$ is a bigon, $s_2$ is a triangle, and $s_3$ is a pentagon.  The gradings are determined by $|s_1|=|w_1|+|w_2|$, $|s_2|= 2 + |w_2| - 2|w_3|$, $|s_3| = 4 - 2|w_1| - 2|w_2| + |w_3|$, and $|m_1| = |w_1|$, $|m_2|=|w_3|$.  
\begin{center}
  \begin{tikzpicture}[xscale=0.9, yscale=0.8]

    \tikzset{->-/.style={decoration={markings, mark=at position #1 with {\arrow{>}}},postaction={decorate}}}

    \node[] (t1) at (0,0) {};
    \node[] (t2) at (0,4) {};
    \node[] (t3) at (10,4) {};
    \node[] (t4) at (10,0) {};

    \node[] (L0left) at (0,1) {};
    \node[] (L1left) at (0,2) {};
    \node[] (L2left) at (0,3.5) {};

    \node[] (L0right) at (10,1) {};
    \node[] (L1right) at (10,2) {};
    \node[] (L2right) at (10,3.5) {};

   \draw[thick] (L0left.center)  .. controls +(45:1) and +(270:1) ..  (6,3) ;
   \draw[thick, ->-=.3] (L2left.center)  .. controls +(0:1) and +(90:1) .. (6,3);

   \draw[thick] (L0left.center)  .. controls +(315:1) and +(225:1) .. (L0right.center);
    
    \draw[thick] (L0right.center)  .. controls +(135:1) and +(270:1) .. (4,3);
    \draw[thick] (L2right.center)  .. controls +(180:1) and +(90:1) .. (4,3);

    \node[] (x1) at (3.8,3.4) {\small $w_1$}; 
    \node[] (z1) at (4.3,2.35) {\small $w_2$}; 
    \node[] (y1) at (0.7,1) {\small $w_3$}; 
    \node[] (w1) at (9.3,1) {\small $w_3$}; 

    \fill[black] (6.7,3.6) circle (2pt);    
    \fill[black] (8.2,3.59) circle (2pt);    

    \node[] (q) at (6.7,3.8) {\small $\bar{e}$};
    \node[] (e) at (8.2,3.78){\small $e$};
    
    \draw[gray!90, thick, dashed] (t2.center) -- (t3.center);
    \draw[gray!90, thick, dashed] (t4.center) -- (t1.center);
    \draw[gray!90, thick] (t1.center) -- (t2.center);
    \draw[gray!90, thick] (t3.center) -- (t4.center); 

    \draw[thick, dashed] (5,1.2) circle (0.3cm);
    \node (s2) at (5,1.2) {$s_2$};

    \node (s1) at (5,4.2) {$m_1$};
    \node (s3) at (5,-0.4) {$m_2$};

    \draw[thick, dashed] (5,3) circle (0.3cm);
    \node (s4) at (5,3) {$s_1$};

    \draw[thick, dashed] (2,2.6) circle (0.3cm);
    \node (s5) at (2,2.6) {$s_3$};    
  \end{tikzpicture}
\end{center}
\begin{multicols}{3}\small
\begin{itemize}[label={}]
\item $\mathfrak{m}_3(w_{1},w_{1},\bar{w}_{1})= - w_{1}$
\item $\mathfrak{m}_3(w_{1},w_{2},\bar{w}_{2})= - w_{1}$
\item $\mathfrak{m}_3(w_{1},w_{3},\bar{w}_{3})= - w_{1}$
\item $\mathfrak{m}_3(w_{1},\bar{w}_{1},w_{2})= - w_{2}$
\item $\mathfrak{m}_3(w_{1},\bar{w}_{1},w_{3})= - w_{3}$
\item $\mathfrak{m}_3(w_{1},\bar{w}_{1},\bar{w}_{0})=\bar{w}_{0}$
\item $\mathfrak{m}_3(w_{1},\bar{w}_{1},\bar{w}_{1})=\bar{w}_{1}$
\item $\mathfrak{m}_3(w_{1},\bar{w}_{1},\bar{w}_{2})=\bar{w}_{2}$
\item $\mathfrak{m}_3(w_{1},\bar{w}_{1},\bar{w}_{3})=\bar{w}_{3}$
\item $\mathfrak{m}_3(w_{1},\bar{w}_{2},w_{2})=w_{1}$
\item $\mathfrak{m}_3(w_{1},\bar{w}_{3},w_{3})=w_{1}$
\item $\mathfrak{m}_3(w_{2},w_{1},\bar{w}_{1})= - w_{2}$
\item $\mathfrak{m}_3(w_{2},\bar{w}_{2},\bar{w}_{0})=\bar{w}_{0}$
\item $\mathfrak{m}_3(w_{2},\bar{w}_{2},\bar{w}_{1})=\bar{w}_{1}$
\item $\mathfrak{m}_3(w_{3},w_{1},\bar{w}_{1})= - w_{3}$
\item $\mathfrak{m}_3(w_{3},\bar{w}_{2},w_{2})=w_{3}$
\item $\mathfrak{m}_3(w_{3},\bar{w}_{3},w_{2})= - w_{2}$
\item $\mathfrak{m}_3(w_{3},\bar{w}_{3},\bar{w}_{0})=\bar{w}_{0}$
\item $\mathfrak{m}_3(w_{3},\bar{w}_{3},\bar{w}_{1})=\bar{w}_{1}$
\item $\mathfrak{m}_3(\bar{w}_{1},w_{1},\bar{w}_{0})= - \bar{w}_{0}$
\item $\mathfrak{m}_3(\bar{w}_{1},w_{1},\bar{w}_{1})= - \bar{w}_{1}$
\item $\mathfrak{m}_3(\bar{w}_{2},w_{1},\bar{w}_{1})= - \bar{w}_{2}$
\item $\mathfrak{m}_3(\bar{w}_{2},w_{2},w_{2})=w_{2}$
\item $\mathfrak{m}_3(\bar{w}_{2},w_{2},w_{3})=w_{3}$
\item $\mathfrak{m}_3(\bar{w}_{2},w_{2},\bar{w}_{0})= - \bar{w}_{0}$
\item $\mathfrak{m}_3(\bar{w}_{2},w_{2},\bar{w}_{1})= - \bar{w}_{1}$
\item $\mathfrak{m}_3(\bar{w}_{2},w_{2},\bar{w}_{2})= - \bar{w}_{2}$
\item $\mathfrak{m}_3(\bar{w}_{2},w_{2},\bar{w}_{3})= - \bar{w}_{3}$
\item $\mathfrak{m}_3(\bar{w}_{2},w_{3},\bar{w}_{3})= - \bar{w}_{2}$
\item $\mathfrak{m}_3(\bar{w}_{2},\bar{w}_{2},w_{2})=\bar{w}_{2}$
\item $\mathfrak{m}_3(\bar{w}_{2},\bar{w}_{3},w_{3})=\bar{w}_{2}$
\item $\mathfrak{m}_3(\bar{w}_{3},w_{1},\bar{w}_{1})= - \bar{w}_{3}$
\item $\mathfrak{m}_3(\bar{w}_{3},w_{3},w_{2})=w_{2}$
\item $\mathfrak{m}_3(\bar{w}_{3},w_{3},w_{3})=w_{3}$
\item $\mathfrak{m}_3(\bar{w}_{3},w_{3},\bar{w}_{0})= - \bar{w}_{0}$
\item $\mathfrak{m}_3(\bar{w}_{3},w_{3},\bar{w}_{1})= - \bar{w}_{1}$
\item $\mathfrak{m}_3(\bar{w}_{3},w_{3},\bar{w}_{3})= - \bar{w}_{3}$
\item $\mathfrak{m}_3(\bar{w}_{3},\bar{w}_{2},w_{2})=\bar{w}_{3}$
\item $\mathfrak{m}_3(\bar{w}_{3},\bar{w}_{3},w_{3})=\bar{w}_{3}$
\end{itemize}

\end{multicols}
Turning on the $s_2$ deformation, we get
\[ \mathfrak{m}_2(w_3,w_3) = s_2w_2 , \mathfrak{m}_2(w_3,\bar{w}_2) = -s_2\bar{w}_3, \mathfrak{m}_2(\bar{w}_2, w_3) = s_2\bar{w}_3 \]
This gives $\mathfrak{m}_1^{\mathfrak{b}} = 0$ for $\mathfrak{b} = t_1 \bar{w}_1 + t_2 \bar{w}_2 + t_3 \bar{w}_3$. We let $(x_1,x_2,x_3) = (w_1,w_2,w_3)$. The deformed algebra over $k[t_1,t_2,t_3, s_2]$ is
\[\boxed{k[x_1,x_2,x_3] / (x_1^2 + t_1x_1, x_2^2 - t_2 x_2 , x_3^2 - t_3x_3 + s_2x_2, x_1 x_2 + t_1 x_2, x_1 x_3 + t_1x_3, x_2 x_3 - t_2 x_3)}\]
Let $\Delta = t_3^2 - 4s_2t_2$. The possible isomorphism types of algebras occurring in this family are given in the following table.
\[
\begin{array}{c|c}
  \hline
t_1 \neq 0, t_2 \neq 0, \Delta \neq 0 & k \times k \times k \times k \\ 
\text{Exactly one of } \{ t_1=0, t_2=0, \Delta=0\} & k \times k \times k[x]/(x^2) \\ 
t_1 = \Delta = 0,  t_2 \neq 0 & k[x]/(x^2) \times k[y]/(y^2) \\ 
t_2 = t_3 = 0, s_2 \neq 0, t_1 \neq 0 & k \times k[x]/(x^3) \\ 
t_1 = t_2 = 0, t_3 \neq 0 \text{ or } t_2 = t_3 = s_2 = 0, t_1 \neq 0  & k \times k[x,y]/(x,y)^2\\ 
t_1 = t_2 = t_3 = 0, s_2 \neq 0 & k[x,y]/(x^2, y^3, xy) \\ 
t_1 = t_2 = t_3 = s_2 = 0 & k[x,y,z]/(x,y,z)^2  \\
\hline
\end{array}
\]
\subsection*{n=3 : \texorpdfstring{\protect\doubleear}{} $1^+2^+2^-3^+3^-1^-$}
This curve is immersed in a five-punctured sphere. The boundary components are $m_1, m_2, m_3, s_1, s_2$, where $m_1, m_2$ and $m_3$ are monogons, and $s_1$ is a rectangle, and $s_2$ is a pentagon. The gradings are determined by $|s_1| =2 - 2|w_1| + |w_2| + |w_3|$, $|s_2|= 4 + |w_1| - |w_2|-|w_3|$ and $|m_1| = |w_1|, |m_2|= |w_2|, |m_3| = |w_3|$.  If we choose $|w_i|=0$, we have the following triple products.

\begin{center}
\begin{tikzpicture}[scale=1.5]
    \tikzset{->-/.style={decoration={markings, mark=at position #1 with {\arrow{>}}},postaction={decorate}}}

  \draw[thick, line cap=round, line join=round, use Hobby shortcut]
    ([closed=true]0, 1.4) .. 
    (0.9, 0.5)[->-=0.4] ..            
    (0.6, -0.3) ..          
    (0.3,  -0.1) ..           
    (0.2, 0.4) ..           
    (0.4, 0.4) ..           
    (0.3, -0.1) ..            
    (0.0, -0.2) ..           
    (-0.3, -0.1) ..           
    (-0.4, 0.4) ..          
    (-0.2, 0.4) ..          
    (-0.3, -0.1) ..          
    (-0.6, -0.3) ..         
    (-0.9, 0.5) ..           
    (0, 1.4) ..              
    (1, 1.1) ..            
    (1.2, 0.8) ..            
    (1, -1.1) ..           
    (0, -1.4) ..             
    (-1, -1.1) ..          
    (-1.2, 0.8) ..           
    (-1, 1.1);             

    \draw[thick, dashed] (0,0.7) circle (0.2cm);
    \node (s2) at (0,0.7) {\scriptsize $s_2$};

    \draw[thick, dashed] (0,-0.7) circle (0.2cm);
    \node (s1) at (0,-0.7) {\scriptsize $s_1$};

    \node[] (x1) at (0.35,1.34) {\scriptsize $w_1$}; 
    \node[] (z1) at (0.5,-0.1) {\scriptsize $w_2$}; 
    \node[] (y1) at (-0.5,-0.1) {\scriptsize $w_3$}; 
  
    \fill[black] (0.9,1.2) circle (1pt);    
    \fill[black] (1.15,0.9) circle (1pt);    

    \node[] (q) at (1.05,1.25) {\small $\bar{e}$};
    \node[] (e) at (1.25,1){\small $e$};

    \node[] (m1) at (2, 0.2) {\scriptsize $m_1$};
    \draw[thick, dashed] (2,0.2) circle (0.2cm);

    \node[] (m3) at (-0.3, 0.2) {\scriptsize $m_3$};
    \node[] (m2) at (0.3, 0.2) {\scriptsize $m_2$};
   
  \end{tikzpicture}
\end{center}

\begin{multicols}{3}\small
\begin{itemize}[label={}]
\item $\mathfrak{m}_3(w_{1},w_{1},\bar{w}_{1})= - w_{1}$
\item $\mathfrak{m}_3(w_{1},w_{2},\bar{w}_{2})= - w_{1}$
\item $\mathfrak{m}_3(w_{1},w_{3},\bar{w}_{3})= - w_{1}$
\item $\mathfrak{m}_3(w_{1},\bar{w}_{1},w_{2})= - w_{2}$
\item $\mathfrak{m}_3(w_{1},\bar{w}_{1},w_{3})= - w_{3}$
\item $\mathfrak{m}_3(w_{1},\bar{w}_{1},\bar{w}_{0})=\bar{w}_{0}$
\item $\mathfrak{m}_3(w_{1},\bar{w}_{1},\bar{w}_{1})=\bar{w}_{1}$
\item $\mathfrak{m}_3(w_{1},\bar{w}_{1},\bar{w}_{2})=\bar{w}_{2}$
\item $\mathfrak{m}_3(w_{1},\bar{w}_{1},\bar{w}_{3})=\bar{w}_{3}$
\item $\mathfrak{m}_3(w_{1},\bar{w}_{2},w_{2})=w_{1}$
\item $\mathfrak{m}_3(w_{1},\bar{w}_{3},w_{3})=w_{1}$
\item $\mathfrak{m}_3(w_{2},w_{1},\bar{w}_{1})= - w_{2}$
\item $\mathfrak{m}_3(w_{2},w_{2},\bar{w}_{2})= - w_{2}$
\item $\mathfrak{m}_3(w_{2},\bar{w}_{2},w_{3})= - w_{3}$
\item $\mathfrak{m}_3(w_{2},\bar{w}_{2},\bar{w}_{0})=\bar{w}_{0}$
\item $\mathfrak{m}_3(w_{2},\bar{w}_{2},\bar{w}_{1})=\bar{w}_{1}$
\item $\mathfrak{m}_3(w_{2},\bar{w}_{2},\bar{w}_{2})=\bar{w}_{2}$
\item $\mathfrak{m}_3(w_{2},\bar{w}_{2},\bar{w}_{3})=\bar{w}_{3}$
\item $\mathfrak{m}_3(w_{3},w_{1},\bar{w}_{1})= - w_{3}$
\item $\mathfrak{m}_3(w_{3},w_{2},\bar{w}_{2})= - w_{3}$
\item $\mathfrak{m}_3(w_{3},w_{3},\bar{w}_{3})= - w_{3}$
\item $\mathfrak{m}_3(w_{3},\bar{w}_{2},w_{2})=w_{3}$
\item $\mathfrak{m}_3(w_{3},\bar{w}_{3},\bar{w}_{0})=\bar{w}_{0}$
\item $\mathfrak{m}_3(w_{3},\bar{w}_{3},\bar{w}_{1})=\bar{w}_{1}$
\item $\mathfrak{m}_3(w_{3},\bar{w}_{3},\bar{w}_{3})=\bar{w}_{3}$
\item $\mathfrak{m}_3(\bar{w}_{1},w_{1},\bar{w}_{0})= - \bar{w}_{0}$
\item $\mathfrak{m}_3(\bar{w}_{1},w_{1},\bar{w}_{1})= - \bar{w}_{1}$
\item $\mathfrak{m}_3(\bar{w}_{2},w_{1},\bar{w}_{1})= - \bar{w}_{2}$
\item $\mathfrak{m}_3(\bar{w}_{2},w_{2},w_{3})=w_{3}$
\item $\mathfrak{m}_3(\bar{w}_{2},w_{2},\bar{w}_{0})= - \bar{w}_{0}$
\item $\mathfrak{m}_3(\bar{w}_{2},w_{2},\bar{w}_{1})= - \bar{w}_{1}$
\item $\mathfrak{m}_3(\bar{w}_{2},w_{2},\bar{w}_{2})= - \bar{w}_{2}$
\item $\mathfrak{m}_3(\bar{w}_{2},w_{2},\bar{w}_{3})= - \bar{w}_{3}$
\item $\mathfrak{m}_3(\bar{w}_{3},w_{1},\bar{w}_{1})= - \bar{w}_{3}$
\item $\mathfrak{m}_3(\bar{w}_{3},w_{2},\bar{w}_{2})= - \bar{w}_{3}$
\item $\mathfrak{m}_3(\bar{w}_{3},w_{3},\bar{w}_{0})= - \bar{w}_{0}$
\item $\mathfrak{m}_3(\bar{w}_{3},w_{3},\bar{w}_{1})= - \bar{w}_{1}$
\item $\mathfrak{m}_3(\bar{w}_{3},w_{3},\bar{w}_{3})= - \bar{w}_{3}$
\item $\mathfrak{m}_3(\bar{w}_{3},\bar{w}_{2},w_{2})=\bar{w}_{3}$
\end{itemize}
\end{multicols}
Turning on the $s_1$-deformation associated with the rectangle, we additionally obtain the following contributions:
\begin{multicols}{2}\small
\begin{itemize}[label={}]
\item $\mathfrak{m}_3(\bar{w}_3, \bar{w}_2, w_1) = s_1 \bar{w}_1$
\item $\mathfrak{m}_3(\bar{w}_2, w_1, w_1) = s_1 w_3$
\item $\mathfrak{m}_3(w_1, w_1, \bar{w}_3) = s_1 w_2$
\item $\mathfrak{m}_3(w_1, \bar{w}_3, \bar{w}_2) = - s_1 \bar{w}_1$  
\item $\mathfrak{m}_4(w_1, \bar{w}_3, \bar{w}_2, w_1) = s_1 w_0$
\item $\mathfrak{m}_4(\bar{w}_3, \bar{w}_2, w_1, \bar{w}_0) = -s_1 \bar{w}_1$
\item $\mathfrak{m}_4(\bar{w}_2, w_1, \bar{w}_0,w_1) = -s_1 w_3$
\item $\mathfrak{m}_4(w_1, \bar{w}_0, w_1, \bar{w}_3) = -s_1 w_2$
\item $\mathfrak{m}_4(\bar{w}_0, w_1, \bar{w}_3, \bar{w}_2) = s_1 \bar{w}_1$
\item $\mathfrak{m}_5(w_1, \bar{w}_3, \bar{w}_2,w_1,\bar{w}_0) = -s_1 w_0 $
\end{itemize}
\end{multicols}

Note that the winding number around $s_2$ is 4, so we have no more deformations. Letting $\mathfrak{b}= t_1 \bar{w}_1 + t_2 \bar{w}_2 + t_3 \bar{w}_3$, we get that $\mathfrak{m}_1^{\mathfrak{b}} =0$ and writing $(x_1,x_2,x_3) = (w_1,w_2,w_3)$, the deformed algebra over $k[t_1,t_2,t_3,s_1]$ is given by
\[\boxed{k[x_1,x_2,x_3] \Big/ \Big( \begin{aligned} &x_1^2 + t_1x_1 - s_1 t_3 x_2 - s_1 t_2 x_3 - s_1t_2t_3,\; x_2^2 + t_2 x_2,\; x_3^2 + t_3x_3,\\ &x_1 x_2 + t_1 x_2,\; x_1 x_3 + t_1x_3,\; x_2 x_3  \end{aligned} \Big)} \]
Let $\Delta = t_1^2 +  4s_1t_2t_3$. The possible isomorphism types of algebras occurring in this family are given in the following table.
\[
\begin{array}{c|c}
  \hline
t_2 \neq 0, t_3 \neq 0, \Delta \neq 0 & k \times k \times k \times k \\ 
\text{Exactly one of } \{ t_2=0, t_3=0, \Delta=0\} & k \times k \times k[x]/(x^2) \\ 
   t_2,s_1 \neq 0, t_1=t_3=0 \text{ or } t_3,s_1\neq 0, t_1=t_2=0  &  k \times k[x]/(x^3) \\ 
t_1 = t_2 = s_1=0, t_3 \neq 0 \text{ or } t_1 = t_3 = s_1 = 0, t_2 \neq 0  \text{ or } t_2=t_3=0, t_1 \neq 0& k \times k[x,y]/(x,y)^2\\ 
t_1 = t_2 = t_3 = 0 & k[x,y,z]/(x,y,z)^2  \\
\hline
\end{array}
\]
\subsection*{n=3 : \texorpdfstring{\protect\toric}{} $1^+2^+3^+3^-2^-1^-$}
This curve is immersed in a five-punctured sphere. The boundary components are $m_1, m_2$ and $s_1, s_2, s_3$, where $m_1$ and $m_2$ are monogons, and $s_1$ and $s_2$ are triangles, $s_3$ is a rectangle. The gradings are related by $|s_1| = 2 + |w_2| - 2|w_3|$, $|s_2| = 2 + |w_1| -2|w_2| + |w_3|$,  $|s_3|= 2 + |w_2| - 2|w_1|$, $|m_1|= |w_1|$ and $|m_2|= |w_3|$. 
\begin{center}
  \begin{tikzpicture}[xscale=0.9, yscale=0.8]
    \tikzset{->-/.style={decoration={markings, mark=at position #1 with {\arrow{>}}},postaction={decorate}}}

    \node[] (t1) at (0,0) {};
    \node[] (t2) at (0,4) {};
    \node[] (t3) at (10,4) {};
    \node[] (t4) at (10,0) {};

    \node[] (L0left) at (0,1.4) {};
    \node[] (L0right) at (10,0.4) {};

    \node[] (L1left) at (0,2.4) {};
    \node[] (L1right) at (10,1.4) {};

    \node[] (L2left) at (0,0.4) {};
    \node[] (L2right) at (10,3.4) {};

    \node[] (L3left) at (0,3.4) {};
    \node[] (L3right) at (10,2.4) {};

    \draw[gray!90, thick, dashed] (t2.center) -- (t3.center);
    \draw[gray!90, thick, dashed] (t4.center) -- (t1.center);
    \draw[gray!90, thick] (t1.center) -- (t2.center);
    \draw[gray!90, thick] (t3.center) -- (t4.center); 

    \draw[thick] (L0left.center) .. controls +(0:1) and +(180:1) .. (L0right.center);

    \draw[thick] (L2left.center) .. controls +(0:1) and +(180:1) .. (L2right.center);
    
    \draw[thick] (L3left.center) .. controls +(0:1) and +(180:1) .. (L3right.center);
  
    \draw[thick, ->-=.5] (L1left.center) .. controls +(0:1) and +(180:1) .. (L1right.center);

    \fill[black] (8.3,2.98) circle (2pt);    
    \fill[black] (9.2,3.24) circle (2pt);    

    \node[] (q) at (8.3,3.2) {\small $\bar{e}$};
    \node[] (e) at (9.2,3.45) {\small $e$};

   \node[] (z1) at (6.5,2.57) {\small $w_1$}; 
    \node[] (x1) at (1.75,1.05) {\small $w_3$}; 
    \node[] (y1) at (4.2,1.85) {\small $w_2$}; 

    \draw[thick, dashed] (7,1.2) circle (0.3cm);
    \node (s1) at (7,1.2) {$s_1$};
    \draw[thick, dashed] (2,1.8) circle (0.3cm);
    \node (s2) at (2,1.8) {$s_2$};
    \draw[thick, dashed] (4,2.5) circle (0.3cm);
    \node (s3) at (4,2.5) {$s_3$};

    \node (m1) at (5,4.2) {$m_1$};
    \node (m2) at (5,-0.4) {$m_2$};

  \end{tikzpicture}
\end{center}
\begin{multicols}{3}\small
\begin{itemize}[label={}]
\item $\mathfrak{m}_3(w_{1},w_{1},\bar{w}_{1})= - w_{1}$
\item $\mathfrak{m}_3(w_{1},w_{2},\bar{w}_{2})= - w_{1}$
\item $\mathfrak{m}_3(w_{1},w_{3},\bar{w}_{3})= - w_{1}$
\item $\mathfrak{m}_3(w_{1},\bar{w}_{1},w_{2})= - w_{2}$
\item $\mathfrak{m}_3(w_{1},\bar{w}_{1},w_{3})= - w_{3}$
\item $\mathfrak{m}_3(w_{1},\bar{w}_{1},\bar{w}_{0})=\bar{w}_{0}$
\item $\mathfrak{m}_3(w_{1},\bar{w}_{1},\bar{w}_{1})=\bar{w}_{1}$
\item $\mathfrak{m}_3(w_{1},\bar{w}_{1},\bar{w}_{2})=\bar{w}_{2}$
\item $\mathfrak{m}_3(w_{1},\bar{w}_{1},\bar{w}_{3})=\bar{w}_{3}$
\item $\mathfrak{m}_3(w_{1},\bar{w}_{2},w_{2})=w_{1}$
\item $\mathfrak{m}_3(w_{1},\bar{w}_{3},w_{3})=w_{1}$
\item $\mathfrak{m}_3(w_{2},w_{1},\bar{w}_{1})= - w_{2}$
\item $\mathfrak{m}_3(w_{2},w_{2},\bar{w}_{2})= - w_{2}$
\item $\mathfrak{m}_3(w_{2},w_{3},\bar{w}_{3})= - w_{2}$
\item $\mathfrak{m}_3(w_{2},\bar{w}_{2},w_{3})= - w_{3}$
\item $\mathfrak{m}_3(w_{2},\bar{w}_{2},\bar{w}_{0})=\bar{w}_{0}$
\item $\mathfrak{m}_3(w_{2},\bar{w}_{2},\bar{w}_{1})=\bar{w}_{1}$
\item $\mathfrak{m}_3(w_{2},\bar{w}_{2},\bar{w}_{2})=\bar{w}_{2}$
\item $\mathfrak{m}_3(w_{2},\bar{w}_{2},\bar{w}_{3})=\bar{w}_{3}$
\item $\mathfrak{m}_3(w_{2},\bar{w}_{3},w_{3})=w_{2}$
\item $\mathfrak{m}_3(w_{3},w_{1},\bar{w}_{1})= - w_{3}$
\item $\mathfrak{m}_3(w_{3},w_{2},\bar{w}_{2})= - w_{3}$
\item $\mathfrak{m}_3(w_{3},w_{3},\bar{w}_{3})= - w_{3}$
\item $\mathfrak{m}_3(w_{3},\bar{w}_{3},\bar{w}_{0})=\bar{w}_{0}$
\item $\mathfrak{m}_3(w_{3},\bar{w}_{3},\bar{w}_{1})=\bar{w}_{1}$
\item $\mathfrak{m}_3(w_{3},\bar{w}_{3},\bar{w}_{2})=\bar{w}_{2}$
\item $\mathfrak{m}_3(w_{3},\bar{w}_{3},\bar{w}_{3})=\bar{w}_{3}$
\item $\mathfrak{m}_3(\bar{w}_{1},w_{1},\bar{w}_{0})= - \bar{w}_{0}$
\item $\mathfrak{m}_3(\bar{w}_{1},w_{1},\bar{w}_{1})= - \bar{w}_{1}$
\item $\mathfrak{m}_3(\bar{w}_{2},w_{1},\bar{w}_{1})= - \bar{w}_{2}$
\item $\mathfrak{m}_3(\bar{w}_{2},w_{2},\bar{w}_{0})= - \bar{w}_{0}$
\item $\mathfrak{m}_3(\bar{w}_{2},w_{2},\bar{w}_{1})= - \bar{w}_{1}$
\item $\mathfrak{m}_3(\bar{w}_{2},w_{2},\bar{w}_{2})= - \bar{w}_{2}$
\item $\mathfrak{m}_3(\bar{w}_{3},w_{1},\bar{w}_{1})= - \bar{w}_{3}$
\item $\mathfrak{m}_3(\bar{w}_{3},w_{2},\bar{w}_{2})= - \bar{w}_{3}$
\item $\mathfrak{m}_3(\bar{w}_{3},w_{3},\bar{w}_{0})= - \bar{w}_{0}$
\item $\mathfrak{m}_3(\bar{w}_{3},w_{3},\bar{w}_{1})= - \bar{w}_{1}$
\item $\mathfrak{m}_3(\bar{w}_{3},w_{3},\bar{w}_{2})= - \bar{w}_{2}$
\item $\mathfrak{m}_3(\bar{w}_{3},w_{3},\bar{w}_{3})= - \bar{w}_{3}$
\end{itemize}
\end{multicols}  
Turning on the $s_1$-deformation, we get the following contributions
\begin{multicols}{3}\small
\begin{itemize}[label={}]
\item  $\mathfrak{m}_2 (w_3,w_3) = s_1w_2$
\item  $\mathfrak{m}_2 (w_3,\bar{w}_2) = -s_1\bar{w}_3$
\item  $\mathfrak{m}_2 (\bar{w}_2,w_3) = s_1\bar{w}_3$
\end{itemize}
\end{multicols}
In addition, turning on the $s_2$-deformation, we get the following contributions:
\begin{multicols}{3}\small
\begin{itemize}[label={}]
\item $\mathfrak{m}_2 (w_2,w_3) = s_1s_2w_1 $
\item $\mathfrak{m}_2 (w_3,\bar{w}_1) = -s_1s_2\bar{w}_2$
\item    $\mathfrak{m}_2 (\bar{w}_1,w_2) = s_1s_2\bar{w}_3$ 
\item
\item    $\mathfrak{m}_2 (w_3,w_2) = s_1s_2w_1$ 
\item    $\mathfrak{m}_2 (w_2,\bar{w}_1) = -s_1s_2\bar{w}_3$
\item    $\mathfrak{m}_2 (\bar{w}_1, w_3) = s_1s_2\bar{w}_2$
\item
\item    $\mathfrak{m}_3(w_2,\bar{w}_3, w_2) = s_2 w_1$
\item    $\mathfrak{m}_3(\bar{w}_3, w_2, \bar{w}_1) = -s_2 \bar{w}_2$
\item    $\mathfrak{m}_3(w_2,\bar{w}_1, w_2) = -s_2 w_3$
\item    $\mathfrak{m}_3(\bar{w}_1,w_2, \bar{w}_3) = s_2 \bar{w}_2$  
\end{itemize}
\end{multicols}
Finally, we also turn on the $s_3$-deformation. 
\begin{multicols}{2}\small
\begin{itemize}[label={}]
\item  $\mathfrak{m}_2 (w_1, w_1) = s_3 w_2$ 
\item  $\mathfrak{m}_2 (w_1, \bar{w}_2) = - s_3 \bar{w}_1$ 
\item  $\mathfrak{m}_2 (\bar{w}_2, w_1) = s_3 \bar{w}_1$ 
\item  $\mathfrak{m}_2 (w_1,w_2)  = s_2s_3 w_3$ 
\item  $\mathfrak{m}_2 (w_2,\bar{w}_3)  = -s_2s_3 \bar{w}_1$ 
\item  $\mathfrak{m}_2 (\bar{w}_3,w_1)  = s_2s_3 \bar{w}_2$ 
\item  $\mathfrak{m}_2 (w_2,w_1)  = s_2s_3 w_3$ 
\item  $\mathfrak{m}_2 (w_1,\bar{w}_3)  = -s_2s_3 \bar{w}_2$ 
\item  $\mathfrak{m}_2 (\bar{w}_3,w_2)  = s_2s_3 \bar{w}_1$
\item  $\mathfrak{m}_2 (w_1,w_3)  = s_1 s_2s_3 w_0$ 
\item  $\mathfrak{m}_2 (w_3,\bar{w}_0)  = -s_1 s_2s_3 \bar{w}_1$ 
\item  $\mathfrak{m}_2 (\bar{w}_0,w_1)  = s_1 s_2s_3 \bar{w}_3$ 
\item  $\mathfrak{m}_2 (w_3,w_1)  = s_1 s_2s_3 w_0$  
\item  $\mathfrak{m}_2 (w_1,\bar{w}_0)  = -s_1 s_2s_3 \bar{w}_3$ 
\item  $\mathfrak{m}_2 (\bar{w}_0,w_3)  = s_1s_2s_3 \bar{w}_1$ 
\item  $\mathfrak{m}_2 (w_2, w_2) = s_1s_2^2s_3 w_0$ 
\item  $\mathfrak{m}_2 (w_2, \bar{w}_0) = - s_1s_2^2s_3 \bar{w}_2$ 
\item  $\mathfrak{m}_2 (\bar{w}_0, w_2) = s_1s_2^2s_3 \bar{w}_2$
\item $\mathfrak{m}_3(w_2,\bar{w}_3,w_1) = s_2s_3 w_0 $
\item $\mathfrak{m}_3(\bar{w}_3,w_1, \bar{w}_0) = -s_2s_3 \bar{w}_2 $
\item $\mathfrak{m}_3(w_1,\bar{w}_0,w_2) = -s_2s_3 w_3 $
\item $\mathfrak{m}_3(\bar{w}_0,w_2, \bar{w}_3) = s_2s_3 \bar{w}_1 $
\item $\mathfrak{m}_3(w_1,\bar{w}_3,w_2) = s_2s_3 w_0 $
\item $\mathfrak{m}_3(\bar{w}_3,w_2, \bar{w}_0) = -s_2s_3 \bar{w}_1 $
\item $\mathfrak{m}_3(w_2,\bar{w}_0,w_1) = -s_2s_3 w_3 $
\item $\mathfrak{m}_3(\bar{w}_0,w_1, \bar{w}_3) = s_2s_3 \bar{w}_2 $
\item $\mathfrak{m}_3(w_1, \bar{w}_2,w_1) = s_3 w_0 $
\item $\mathfrak{m}_3(\bar{w}_2,w_1,\bar{w}_0) = -s_3 \bar{w}_1 $
\item $\mathfrak{m}_3(w_1, \bar{w}_0,w_1) = -s_3 w_2 $
\item $\mathfrak{m}_3(\bar{w}_0, w_1, \bar{w}_2) = s_3 \bar{w}_1 $
\item $\mathfrak{m}_3(w_3 ,w_1 ,\bar{w}_0) = -s_1s_2s_3 w_0 $
\item $\mathfrak{m}_3(w_1 ,w_3 ,\bar{w}_0) = -s_1s_2s_3 w_0 $
\item $\mathfrak{m}_3(w_2 ,w_2 ,\bar{w}_0) = -s_1s_2^2 s_3 w_0 $
\item $\mathfrak{m}_4(w_2,\bar{w}_3,w_1,\bar{w}_0) = -s_2s_3 w_0$
\item $\mathfrak{m}_4(w_1,\bar{w}_3,w_2,\bar{w}_0) = -s_2s_3 w_0$
\item $\mathfrak{m}_4(w_1,\bar{w}_2,w_1,\bar{w}_0) = -s_3 w_0$
\end{itemize}
\end{multicols}
 The deformed algebra over $k[t_1,t_2,t_3,s_1,s_2,s_3]$ is
\[ \boxed{k[x_1,x_2,x_3] \Big/ \Big( \begin{aligned} &x_1^2 + t_1x_1 - s_3x_2 - s_3t_2 ,\; x_2^2 + t_2 x_2 - s_2t_3 x_1 + s_2 t_1 x_3 - s_1s_2^2s_3,\; x_3^2 + t_3x_3 - s_1 x_2,\\
                                                     & x_1x_2 + t_1 x_2 - s_2s_3 x_3 - s_2 s_3 t_3,\; x_2 x_3 + t_2 x_3 - s_1s_2x_1,\; x_1x_3 + t_1 x_3 - s_1s_2s_3 \end{aligned} \Big)} \]
 Over an algebraically closed field $k$, every isomorphism type (of which there are 9) of commutative, associative, unital $k$-algebras of dimension $4$ appears as a fiber of this family. The following table gives realizations of all commutative algebras:
\[
\begin{array}{c|c}
  \hline
  s_i=t_i=0 & k[x,y,z]/(x,y,z)^2 \\ 
  s_1=1,\; s_2=s_3=t_i=0 & k[x,y] /(x^3,xy,y^2) \\
  s_1=s_3=1,\; s_2=t_i=0 & k[x_1,x_3]/ (x_1^2-x_3^2,x_1x_3) \simeq k[x,y]/(x^2,y^2) \\
  s_1=s_2=1,\; s_3=t_i=0 & k[x]/(x^4) \\
  s_2=t_3=1,\; s_1=s_3=t_1=t_2=0 & k \times k[x]/(x^3) \\
  t_1=1,\;  s_1=s_2=s_3=t_2=t_3=0 & k \times k[x,y]/(x,y)^2 \\
  t_2=1,\; s_1=s_2=s_3=t_1=t_3=0 & k[x]/(x^2) \times k[y]/(y^2) \\
  t_1=t_3=1,\; s_1=s_2=s_3=t_2=0  & k \times k \times k[x]/(x^2) \\
  t_1=t_2=t_3=1,\; s_1=s_2=s_3=0 & k \times k \times k \times k \\ \hline                      
\end{array}                                               
\]

\begin{rmk} For $f = s_1x^4 + t_3x^3y + t_2x^2y^2 + t_1xy^3 - s_3y^4$ we can consider the complex
  \[ \mathcal{O}_{\mathbb{P}^1} (-4) \xlongrightarrow{f} \mathcal{O}_{\mathbb{P}^1} \]
  As in the cubic case, we can consider the ring $R_f = H^0 R\pi_*(\mathcal{O}_{\mathbb{P}^1}(-4) \to \mathcal{O}_{\mathbb{P}^1})$. A concrete computation of this ring appears in \cite{wood} and its presentation is given by  
\[ \boxed{k[x_1,x_2,x_3] \Big/ \Big( \begin{aligned} &x_1^2 + t_1x_1 - s_3x_2 - s_3t_2 ,\; x_2^2 + t_2 x_2 - t_3 x_1 + t_1 x_3 - s_1s_3,\; x_3^2 + t_3x_3 - s_1 x_2,\\
                                                     & x_1x_2 + t_1 x_2 - s_3 x_3 - s_3 t_3,\; x_2 x_3 + t_2 x_3 - s_1x_1,\; x_1x_3 + t_1 x_3 - s_1s_3 \end{aligned} \Big)} \]
which is precisely our presentation after setting $s_2=1$. However, apart from $k[x,y,z]/(x,y,z)^2$ corresponding to the case $f=0$, these rings are necessarily monogenic because for $f \neq 0$ they are given by global functions on the scheme cut out by $f$, so they give only \[ k[x]/(x^4), k[x]/(x^2) \times k[y]/y^2, k \times k[x]/(x^3), k \times k \times k[x]/(x^2), k \times k \times k \times k \].                                             
\end{rmk}
\subsection*{n=3 : \texorpdfstring{\protect\trefoil}{} $1^+2^-3^+1^-2^+3^-$}
The ``trefoil'' curve is immersed in a five-punctured sphere. It was studied in \cite{Seidel} and also appears in \cite{EvansLekili} (with different grading choices). The boundary components are $s_1, s_2, s_3, s_4,s_5$, where $s_1,s_2$ are triangles, $s_3$, $s_4$ and $s_5$ are bigons. We will use the equivalent Gauss word $1^-2^+3^-1^+2^-3^+$ that is compatible with the below picture. The gradings are determined by
$|s_1| = |s_2| = |w_1|+|w_2|+|w_3|$, $|s_3| = 2- |w_1|-|w_2|$, $|s_4| = 2- |w_2|-|w_3|$, $|s_5| = 2- |w_1|-|w_3|$. We choose to use the gradings $|w_1|=|w_3|=1$, and $|w_2|=0$. In this way, $|s_1|=|s_2|=2$ hence can be compactified in the usual way, and $|s_3|=|s_4|=1$ hence can also be compactified with a $\mathbb{Z}/2\mathbb{Z}$-orbifold point.
\begin{center}
  \begin{tikzpicture}[xscale=0.9, yscale=0.9]
    \tikzset{->-/.style={decoration={markings, mark=at position #1 with {\arrow{<}}},postaction={decorate}}}
    \node[] (L0left) at (2,2) {};
    \node[] (L00mid)  at (6,3) {};
    \node[] (L0mid)  at (6,1) {};
    \node[] (L0right) at (2,2) {};

    \node[] (L1left) at (8,2) {};
    \node[] (L11mid)  at (4,3) {};
    \node[] (L1mid)  at (4,1) {};
    \node[] (L1right) at (8,2) {};

    \node[] (x1) at (5,3.9) {\small $w_3$}; 

   \node[] (z1) at (5,1.8) {\small $w_2$}; 
  
    \node[] (y1) at (5,-0.3) {\small $w_1$}; 

    \fill[black] (6.9,3.62) circle (2pt);    
    \fill[black] (7.45,3.2) circle (2pt);    

    \node[] (q) at (7,3.8) {\small $\bar{e}$};
    \node[] (e) at (7.6,3.3){\small $e$};
    
    \draw[thick, dashed] (5,0.8) circle (0.3cm);
    \node (s3) at (5,0.8) {$s_3$};

    \draw[thick, dashed] (5,3) circle (0.3cm);
    \node (s4) at (5,3) {$s_4$};

    \draw[thick, dashed] (3,2) circle (0.3cm);
    \node (s2) at (3,2) {$s_2$};

    \draw[thick, dashed] (7,2) circle (0.3cm);
    \node (s1) at (7,2) {$s_1$};
    
    \draw[thick, dashed] (9,2) circle (0.3cm);
    \node (s5) at (9,2) {$s_5$};
  
    \draw[thick] (L0left.center)  .. controls +(270:2) and +(270:2) .. (L0mid.center);
    \draw[thick] (L0right.center)  .. controls +(90:2) and +(90:2) .. (L00mid.center);

    \draw[thick] (L1left.center)  .. controls +(90:2) and +(90:2) .. (L11mid.center);
    \draw[thick, ->-=0.6] (L1mid.center)  .. controls +(270:2) and +(270:2) .. (L1right.center);

    \draw[thick] (L0mid.center)  .. controls +(90:1) and +(270:1) .. (L11mid.center);
    \draw[thick] (L00mid.center)  .. controls +(270:1) and +(90:1) .. (L1mid.center);
  \end{tikzpicture}
\end{center}
 With these choices, the product and the triple products are determined as follows.
\begin{align*} \mathfrak{m}_2(w_i,w_0) &= w_i = (-1)^{|w_i|} \mathfrak{m}_2(w_0,w_i)  \text{ for }i=0,1,2,3\\
\mathfrak{m}_2(\bar{w}_i,w_0) &= \bar{w}_i = (-1)^{|\bar{w}_i|} \mathfrak{m}_2(w_0,\bar{w}_i) \text{ for } i=0,1,2,3\\
\mathfrak{m}_2(\bar{w}_1,w_1) &= \bar{w}_0 = \mathfrak{m}_2(w_1,\bar{w}_1)\\
\mathfrak{m}_2(\bar{w}_2,w_2) &= \bar{w}_0 = -\mathfrak{m}_2(w_2,\bar{w}_2)\\ 
\mathfrak{m}_2(\bar{w}_3,w_3) &= \bar{w}_0 = \mathfrak{m}_2(w_3,\bar{w}_3)
\end{align*}
\begin{multicols}{3}\small
\begin{itemize}[label={}]
\item $\mathfrak{m}_3(w_{1},\bar{w}_{1},w_{3})= - w_{3}$
\item $\mathfrak{m}_3(w_{1},\bar{w}_{1},\bar{w}_{0})= - \bar{w}_{0}$
\item $\mathfrak{m}_3(w_{1},\bar{w}_{1},\bar{w}_{2})= - \bar{w}_{2}$
\item $\mathfrak{m}_3(w_{2},w_{1},\bar{w}_{1})=w_{2}$
\item $\mathfrak{m}_3(w_{2},w_{2},\bar{w}_{2})= - w_{2}$
\item $\mathfrak{m}_3(w_{2},\bar{w}_{1},w_{1})=w_{2}$
\item $\mathfrak{m}_3(w_{2},\bar{w}_{2},w_{1})=w_{1}$
\item $\mathfrak{m}_3(w_{2},\bar{w}_{2},w_{3})=w_{3}$
\item $\mathfrak{m}_3(w_{2},\bar{w}_{2},\bar{w}_{0})=\bar{w}_{0}$
\item $\mathfrak{m}_3(w_{2},\bar{w}_{2},\bar{w}_{2})=\bar{w}_{2}$
\item $\mathfrak{m}_3(w_{2},\bar{w}_{2},\bar{w}_{3})= - \bar{w}_{3}$
\item $\mathfrak{m}_3(w_{2},\bar{w}_{3},w_{3})=w_{2}$
\item $\mathfrak{m}_3(w_{3},w_{2},\bar{w}_{2})= - w_{3}$
\item $\mathfrak{m}_3(w_{3},\bar{w}_{1},w_{1})=w_{3}$
\item $\mathfrak{m}_3(w_{3},\bar{w}_{3},\bar{w}_{0})= - \bar{w}_{0}$
\item $\mathfrak{m}_3(\bar{w}_{1},w_{1},w_{1})= - w_{1}$
\item $\mathfrak{m}_3(\bar{w}_{1},w_{1},w_{2})=w_{2}$
\item $\mathfrak{m}_3(\bar{w}_{1},w_{1},w_{3})= - w_{3}$
\item $\mathfrak{m}_3(\bar{w}_{1},w_{1},\bar{w}_{0})= - \bar{w}_{0}$
\item $\mathfrak{m}_3(\bar{w}_{1},w_{1},\bar{w}_{1})=\bar{w}_{1}$
\item $\mathfrak{m}_3(\bar{w}_{1},w_{1},\bar{w}_{2})= - \bar{w}_{2}$
\item $\mathfrak{m}_3(\bar{w}_{1},w_{1},\bar{w}_{3})=\bar{w}_{3}$
\item $\mathfrak{m}_3(\bar{w}_{1},w_{2},\bar{w}_{2})= - \bar{w}_{1}$
\item $\mathfrak{m}_3(\bar{w}_{1},\bar{w}_{1},w_{1})=\bar{w}_{1}$
\item $\mathfrak{m}_3(\bar{w}_{1},\bar{w}_{3},w_{3})=\bar{w}_{1}$
\item $\mathfrak{m}_3(\bar{w}_{2},w_{2},w_{3})= - w_{3}$
\item $\mathfrak{m}_3(\bar{w}_{2},w_{2},\bar{w}_{0})= - \bar{w}_{0}$
\item $\mathfrak{m}_3(\bar{w}_{2},w_{2},\bar{w}_{2})= - \bar{w}_{2}$
\item $\mathfrak{m}_3(\bar{w}_{2},\bar{w}_{1},w_{1})=\bar{w}_{2}$
\item $\mathfrak{m}_3(\bar{w}_{3},w_{1},\bar{w}_{1})=\bar{w}_{3}$
\item $\mathfrak{m}_3(\bar{w}_{3},w_{2},\bar{w}_{2})= - \bar{w}_{3}$
\item $\mathfrak{m}_3(\bar{w}_{3},w_{3},w_{1})= - w_{1}$
\item $\mathfrak{m}_3(\bar{w}_{3},w_{3},w_{3})= - w_{3}$
\item $\mathfrak{m}_3(\bar{w}_{3},w_{3},\bar{w}_{0})= - \bar{w}_{0}$
\item $\mathfrak{m}_3(\bar{w}_{3},w_{3},\bar{w}_{2})= - \bar{w}_{2}$
\item $\mathfrak{m}_3(\bar{w}_{3},w_{3},\bar{w}_{3})=\bar{w}_{3}$
\item $\mathfrak{m}_3(\bar{w}_{3},\bar{w}_{1},w_{1})=\bar{w}_{3}$
\item $\mathfrak{m}_3(\bar{w}_{3},\bar{w}_{2},w_{2})=\bar{w}_{3}$
\item $\mathfrak{m}_3(\bar{w}_{3},\bar{w}_{3},w_{3})=\bar{w}_{3}$
\end{itemize}
\end{multicols}
Let $\mathfrak{b} = t_1 w_1 + t_2 \bar{w}_2 + t_3 w_3 + \lambda \bar{w}_0$. Then, we have
$\mathfrak{m}_1^{\mathfrak{b}}$ is given as follows:
\begin{align*}
\mathfrak{m}_1^{\mathfrak{b}}(\bar{w}_{1}) &=  2t_1(1 -\lambda) \bar{w}_{0} - t^2_{1} w_{1} - t_{1} t_{2} \bar{w}_{2} - t_{1} t_{3} w_{3} \\
\mathfrak{m}_1^{\mathfrak{b}}(w_2) &= t_{1} t_{2} w_{1} -  t_{2} t_{3} w_{3} \\
\mathfrak{m}_1^{\mathfrak{b}}(\bar{w}_3) &= 2t_3(1-\lambda) \bar{w}_{0} - t_{1} t_{3} w_{1} - t_{2} t_{3} \bar{w}_{2} - t^2_{3} w_{3} 
\end{align*}
We will next turn on the deformations associated with $s_1,s_2,s_3,s_4$.
The only differentials come from the monogons double covering the union of $s_1$ or $s_2$, and $s_3$ or $s_4$. These give
\begin{multicols}{2}\small
\begin{itemize}[label={}]
\item $\mathfrak{m}_1(\bar{w}_1) = (s_2^2 -s_1^2) s_4 w_1$
\item $\mathfrak{m}_1(\bar{w}_3) = (s_2^2 -s_1^2) s_3 w_3$
\end{itemize}
\end{multicols}
The additional $\mathfrak{m}_2$ products come from (i) triangles at $s_1$, $s_2$, (ii) triangles at $s_1$ or $s_2$ union double cover of bigons at $s_3$ or $s_4$, (iii) monogons double covering $s_1$ and $s_3$ or $s_4$ and (iv) bigon with corners at $w_2$ which is supported on the union of $s_1,s_2,s_3,s_4$ and double covering $s_3,s_4$. These give the following contributions:
\begin{multicols}{3}\small
\begin{itemize}[label={}]
\item $\mathfrak{m}_2(\bar{w}_3, \bar{w}_1) = s_1 w_2$
\item $\mathfrak{m}_2(\bar{w}_1, \bar{w}_2) = s_1 w_3$
\item $\mathfrak{m}_2(\bar{w}_2, \bar{w}_3) = s_1 w_1$
\item $\mathfrak{m}_2(\bar{w}_1, \bar{w}_3) = s_2 w_2$
\item $\mathfrak{m}_2(\bar{w}_3, \bar{w}_2) = s_2 w_1$
\item $\mathfrak{m}_2(\bar{w}_2, \bar{w}_1) = s_2 w_3$
\item $\mathfrak{m}_2(\bar{w}_3, w_2) = s_1s_3 \bar{w}_1$
\item $\mathfrak{m}_2(w_2, w_1) = -s_1s_3 w_3$
\item $\mathfrak{m}_2(w_1, \bar{w}_3) = s_1s_3 \bar{w}_2$
\item $\mathfrak{m}_2(w_2, \bar{w}_1) = s_1s_4 \bar{w}_3$
\item $\mathfrak{m}_2(\bar{w}_1, w_3) = s_1s_4 \bar{w}_2$
\item $\mathfrak{m}_2(w_3, w_2) = s_1s_4 w_1$
\item $\mathfrak{m}_2(w_2, \bar{w}_3) = s_2s_3 \bar{w}_1$
\item $\mathfrak{m}_2(\bar{w}_3, w_1) = s_2s_3 \bar{w}_2$
\item $\mathfrak{m}_2(w_1, w_2) = s_2s_3 w_3$
\item $\mathfrak{m}_2(\bar{w}_1, w_2) = s_2s_4 \bar{w}_3$
\item $\mathfrak{m}_2(w_2, w_3) = -s_2s_4 w_1$
\item $\mathfrak{m}_2(w_3, \bar{w}_1) = s_2s_4 \bar{w}_2$
\item $\mathfrak{m}_2(\bar{w}_1,\bar{w}_1)= s_1^2s_4 w_0$
\item $\mathfrak{m}_2(\bar{w}_1,\bar{w}_0)= s_1^2s_4 w_1$
\item $\mathfrak{m}_2(\bar{w}_0,\bar{w}_1)= s_1^2 s_4 w_1$
\item $\mathfrak{m}_2(\bar{w}_3,\bar{w}_3)= s_1^2s_3 w_0$
\item $\mathfrak{m}_2(\bar{w}_3,\bar{w}_0)= s_1^2 s_3 w_3$
\item $\mathfrak{m}_2(\bar{w}_0,\bar{w}_3)= s_1^2 s_3 w_3$
\item $\mathfrak{m}_2(w_2,w_2) = s_1s_2s_3s_4 w_0$
\item $\mathfrak{m}_2(w_2,\bar{w}_0) = -s_1s_2s_3s_4 \bar{w}_2$
\item $\mathfrak{m}_2(\bar{w}_0,w_2) = s_1s_2s_3s_4 \bar{w}_2$
\end{itemize}
\end{multicols}
Next, the additional $\mathfrak{m}_3$ products come from (i) triangle at $s_1$, (ii) double covering of bigons over $s_3$ or $s_4$, (iii) triangle at $s_1$ union double cover of bigons at $s_3$ or $s_4$, (iv) monogons double covering $s_1$ and $s_3$ or $s_4$ and (v) bigon with corners at $w_2$ which is supported on the union of $s_1,s_2,s_3,s_4$ and double covering $s_3,s_4$. These give the following contributions:
\begin{multicols}{2}\small
\begin{itemize}[label={}]
\item $\mathfrak{m}_3(\bar{w}_1, \bar{w}_2, \bar{w}_3) = -s_1 w_0$
\item $\mathfrak{m}_3(\bar{w}_2, \bar{w}_3, \bar{w}_0) = -s_1 w_1$
\item $\mathfrak{m}_3(\bar{w}_3, \bar{w}_0, \bar{w}_1) = -s_1 w_2$
\item $\mathfrak{m}_3(\bar{w}_0, \bar{w}_1, \bar{w}_2) = -s_1 w_3$
\item $\mathfrak{m}_3(w_1,w_2,w_1) = s_3 \bar{w}_2$
\item $\mathfrak{m}_3(w_2,w_1,w_2) = s_3 \bar{w}_1$ 
\item $\mathfrak{m}_3(w_3,w_2,w_3) = -s_4 \bar{w}_2$
\item $\mathfrak{m}_3(w_2,w_3,w_2) = -s_4 \bar{w}_3$ 
\item $\mathfrak{m}_3(w_2,w_1,\bar{w}_3) = s_1s_3 w_0$
\item $\mathfrak{m}_3(w_1,\bar{w}_3, \bar{w}_0) = -s_1s_3 \bar{w}_2$
\item $\mathfrak{m}_3(\bar{w}_3,\bar{w}_0,w_2) = -s_1s_3 \bar{w}_1$
\item $\mathfrak{m}_3(\bar{w}_0,w_2,w_1) = s_1s_3 w_3$
\item $\mathfrak{m}_3(\bar{w}_1,w_3,w_2) = -s_1s_4 w_0$
\item $\mathfrak{m}_3(w_3,w_2,\bar{w}_0) = -s_1s_4 w_1$
\item $\mathfrak{m}_3(w_2,\bar{w}_0,\bar{w}_1) = -s_1s_4 \bar{w}_3$
\item $\mathfrak{m}_3(\bar{w}_0,\bar{w}_1,w_3) = -s_1s_4 \bar{w}_2$
\item $\mathfrak{m}_3 (\bar{w}_0, \bar{w}_3,\bar{w}_0) = - s_1^2 s_3 w_3$
\item $\mathfrak{m}_3 (\bar{w}_3, \bar{w}_3,\bar{w}_0) = -s_1^2 s_3 w_0$
\item $\mathfrak{m}_3 (\bar{w}_3, \bar{w}_0,\bar{w}_3) = -s_1^2 s_3 w_0$
\item $\mathfrak{m}_3 (\bar{w}_0, \bar{w}_1,\bar{w}_0) = - s_1^2 s_4 w_1$
\item $\mathfrak{m}_3 (\bar{w}_1, \bar{w}_1,\bar{w}_0) = -s_1^2 s_4 w_0$
\item $\mathfrak{m}_3 (\bar{w}_1, \bar{w}_0,\bar{w}_1) = -s_1^2 s_4 w_0$
\item $\mathfrak{m}_3 (w_2,w_2,\bar{w}_0) = - s_1s_2s_3s_4 w_0$  
\end{itemize}
\end{multicols}
Finally, there are $\mathfrak{m}_4$ products that come from (i) triangle at $s_1$, (ii) triangle at $s_1$ union double cover of bigons at $s_3$ or $s_4$, and (iii) monogons double covering $s_1$ and $s_3$ or $s_4$. These give the following contributions:
\begin{multicols}{2}\small
\begin{itemize}[label={}]
\item $\mathfrak{m}_4(\bar{w}_1, \bar{w}_2, \bar{w}_3, \bar{w}_0) = s_1 w_0$,
\item $\mathfrak{m}_4 (w_2,w_1, \bar{w}_3,\bar{w}_0) =  -s_1s_3 w_0$,
\item $\mathfrak{m}_4(\bar{w}_1,w_3,w_2,\bar{w}_0) = s_1s_4w_0$,  
\item $\mathfrak{m}_4 (\bar{w}_1, \bar{w}_0, \bar{w}_1, \bar{w}_0) = s_1^2 s_4w_0$,  
\item $\mathfrak{m}_4 (\bar{w}_3, \bar{w}_0, \bar{w}_3, \bar{w}_0) = s_1^2 s_3w_0$.
\item
\end{itemize}
\end{multicols}
That's it! Now, we let $\mathfrak{b}= t_1w_1+ t_2 \bar{w}_2 + t_3 w_3 + \lambda \bar{w}_0$ and we compute $\mathfrak{m}_1^{\mathfrak{b},s_1,s_2,s_3,s_4}$ as follows:
\begin{align*}
\mathfrak{m}_1^{\mathfrak{b}}(\bar{w}_{1}) &=  2t_1(1 -\lambda) \bar{w}_{0} + (s_2^2s_4 - t^2_{1}- s_1^2s_4 (1-\lambda)^2) w_{1} + (s_2s_4t_3 - t_{1} t_{2} + s_1s_4t_3(1-\lambda)) \bar{w}_{2} \\ &+ (s_2t_2 - t_{1} t_{3} + s_1t_2(1-\lambda)) w_{3} \\
\mathfrak{m}_1^{\mathfrak{b}}(w_2) &= (-s_2s_4t_3 + t_{1} t_{2} + s_1s_4t_3(1-\lambda)) w_{1} + (s_3t_1^2 - s_4 t_3^2) \bar{w}_2  + (s_2s_3t_1 -  t_{2} t_{3} - s_1s_3t_1(1-\lambda)) w_{3} \\
\mathfrak{m}_1^{\mathfrak{b}}(\bar{w}_3) &= 2t_3(1-\lambda) \bar{w}_{0} + (s_2t_2- t_{1} t_{3} + s_1t_2(1-\lambda)) w_{1}  + ( s_2s_3 t_1  - t_{2} t_{3} + s_1s_3t_1(1-\lambda)) \bar{w}_{2}\\ &+ (s_2^2s_3- t^2_{3} - s_1^2s_3(1-\lambda)^2) w_{3} 
\end{align*}
Thus, to get a flat deformation over $k[s_1,s_2,s_3,s_4,t_1,t_2,t_3,\lambda]$, we need
\[ 
  2t_1(1-\lambda)=0 = 2t_3(1-\lambda) \]
and the entries of the following (not quite symmetric) matrix should vanish.
\[\begin{pmatrix}
  s_2^2s_4 - t^2_1 - s_1^2s_4 (1-\lambda)^2  &  s_2s_4t_3 - t_{1} t_{2} + s_1s_4t_3(1-\lambda) & s_2t_2 - t_{1} t_{3} + s_1t_2(1-\lambda)  \\
  -s_2s_4t_3 + t_{1} t_{2} + s_1s_4t_3(1-\lambda) &  s_3t_1^2 - s_4 t_3^2  &    s_2s_3t_1 -  t_{2} t_{3} - s_1s_3t_1(1-\lambda)  \\
   s_2t_2 - t_{1} t_{3} + s_1t_2(1-\lambda) &    s_2s_3t_1 -  t_{2} t_{3} + s_1s_3t_1(1-\lambda) & s_2^2s_3- t^2_{3} - s_1^2s_3(1-\lambda)^2   
\end{pmatrix} = 0\]
A straightforward verification shows that these equations cut out 4 irreducible components. We now let $(x_1,x_2,x_3) = (\bar{w}_1, \bar{w}_3,w_2)$ and write out the presentation of the flat family over each of these components (by calculating $\mathfrak{m}_2^{\mathfrak{b},s_1,s_2,s_3,s_4}$ from the above computation and restricting to each component).

\emph{$t_1=t_3=0$, $s_2 = s_1(\lambda-1)$}

We get the flat family of algebras over $k[s_2,s_3,s_4,t_2,\lambda]$
\[ \boxed{k\{ x_1,x_2,x_3\} \Bigg/ \left( \begin{aligned} &x_1^2 - s_2^2s_4 ,\; x_2^2  - s_2^2s_3,\; x_3^2 + t_2 x_3 + s_2^2 s_3s_4,\\
  & x_1x_2 - s_2x_3 - s_2t_2, \; x_2x_1 + s_2 x_3, \\
  & x_1x_3 + t_2 x_1 - s_2s_4 x_2, \; x_3x_1 + s_2s_4 x_2, \\
  & x_2 x_3 + s_2 s_3 x_1,\; x_3 x_2 - s_2 s_3 x_1 + t_2 x_2 
   \end{aligned} \right)} \]

Note that when $t_2=0$ and $s_2 \neq 0$, letting $i = x_1/s_2$ and $j= x_2/s_2$, we get the quaternion algebra $(s_4,s_3)_k$ which is isomorphic to $M_2(k)$ when $k$ is algebraically closed. When $s_3=s_4=t_2=0$, we get the exterior algebra $\Lambda= k\{x_1,x_2 \} / (x_1^2,x_2^2, x_1x_2 + x_2x_1)$. When $t_2 \neq 0$ and $s_2=0$, we recover the quiver algebra {\scriptsize $\bullet \rightleftarrows \bullet$}. Thus, this family recovers all the algebras in the component of $\Alg_4$ that is the closure of $M_2(k)$.

\emph{$t_1=t_2=t_3=0$, $s_2 = s_1(1-\lambda)$}

We get the flat family of commutative algebras over $k[s_2,s_3,s_4,\lambda]$
\[ \boxed{k[x_1,x_2,x_3] / \left( \begin{aligned} &x_1^2 - s_2^2 s_4,\; x_2^2 - s_2^2s_3,\; x_3^2 - s_2^2s_3s_4, \\
  &x_1x_2 - s_2x_3,\; x_2x_3 - s_2s_3x_1,\; x_1x_3 - s_2s_4x_2  
\end{aligned} \right) }\]
We give the isomorphism types of the algebras occurring in this family (over an algebraically closed field $k$) in the following table.
\[
\begin{array}{c|c}
  \hline
  s_2 \neq 0,\; s_3 \neq0,\; s_4 \neq 0 &   k \times k \times k \times k \\
  s_2,s_4 \neq 0,\; s_3=0 \text{ or } s_2,s_3 \neq 0,\; s_4=0 &  k[x]/(x^2) \times k[y]/(y^2) \\
  s_2 \neq 0,\; s_3=s_4=0 & k[x,y]/ (x^2,y^2) \\
s_2 = 0 \text{ or } s_3=s_4=0&  k[x,y,z]/(x,y,z)^2 \\ \hline
\end{array}
\]
\emph{$t_1=t_2= t_3=0$, $s_3=s_4=0$}

We get the flat family of algebras over $k[s_1,s_2,\lambda]$ 
\[\boxed{k\{ x_1,x_2,x_3\} \Big/ \left( x_1^2,\; x_2^2,\; x_3^2,\; x_1x_2 - s_2x_3,\; x_2x_1 - s_1(1-\lambda)x_3, \; x_1x_3,\; x_3x_1,\; x_2x_3,\; x_3x_2 \right)}\]

This gives us all the quantum algebras $A_q= k\{x,y\}/(x^2,y^2,xy-qyx)$ with $s_1(1-\lambda)/s_2 = q$.

\emph{$\lambda=1$}

The equations describe the condition that the following matrix has rank $\le 1$:
\[ \begin{pmatrix} s_2 & t_1 & t_3 \\ t_1 & s_2s_4 & t_2 \\ t_3 & t_2 & s_2s_3 \end{pmatrix} \]
Over the dense open set where $s_2\neq0$, the variety can be rationally parameterized by setting $t_1=s_2u, t_3=s_2v, s_4=u^2, s_3=v^2$, and $t_2=s_2uv$, proving this locus is irreducible. Its closure defines our final $4$-dimensional component. We get the flat family of algebras over $k[s_1,s_2,u,v]$ given by
\[ \boxed{k\{ x_1,x_2,x_3\} \Bigg/ \left( \begin{aligned}
  & x_1^2 - 2s_2ux_1,\; x_2^2 - 2s_2vx_2,\; x_3^2 - s_2uv(vx_1-ux_2-x_3), \\
  &x_1x_2 - s_2(vx_1+ux_2+x_3),\; x_2x_1 - 2s_2ux_2 \\
  &x_1x_3 - s_2u(-vx_1+ux_2+x_3),\; x_3x_1 - 2s_2ux_3 \\
  &x_2x_3,\; x_3x_2 - s_2v(vx_1-ux_2+x_3)  
\end{aligned} \right) } \]
When $s_2,u,v \neq 0$, let $e_1 = \frac{x_1}{2s_2u}, e_2 = \frac{x_2}{2s_2v}$. Then, $e_1^2=e_1$, $e_2^2=e_2$ and $e_2e_1 =e_2$. Let $e= e_1-e_1e_2$. Then, it can be verified easily that $e$ is a central idempotent. Hence, it splits a field summand. The orthogonal idempotent is $1-e_1+e_1e_2$. It can be shown that $1-e_1, e_1e_2, e_2-e_1e_2$ form a basis of this summand, and give an isomorphism to the algebra ${\scriptsize \begin{pmatrix} k & k \\ 0 &k \end{pmatrix}}$. One can do a similar analysis to show that for $u=0$, $v,s_2 \neq 0$, we get an isomorphism to ${\scriptsize \begin{pmatrix} k[\epsilon]/\epsilon^2 & k \\ 0 &k \end{pmatrix}}$, and dually for $v=0, u,s_2 \neq 0$ to ${\scriptsize \begin{pmatrix} k & k \\ 0 &k[\epsilon]/\epsilon^2 \end{pmatrix}}$.
We give the isomorphism types of the algebras occurring in this family in the following table.
\[
\begin{array}{c|c}
  \hline
  s_2 \neq 0,\; u \neq0,\; v\neq 0 &   k \times {\scriptstyle(\bullet \to \bullet)}  \\
 s_2 \neq 0,\; u=0,\; v\neq 0 &  {\scriptsize \begin{tikzcd}
    \bullet \arrow[distance=1.5em, out=150,in=210,loop,swap] \arrow[r] & \bullet
    \end{tikzcd}}  \\
s_2 \neq 0,\; u \neq 0,\; v=0  & {\scriptsize\begin{tikzcd} \bullet \arrow[r] & \bullet \arrow[distance=1.5em,out=330,in=30,loop,swap]
    \end{tikzcd}}  \\
  s_2 \neq 0,\; u=v=0 & k\{x,y\}/(x^2,y^2,xy) \\
s_2 = 0 & k[x,y,z]/(x,y,z)^2 \\ \hline
\end{array}
\]
\subsection*{n=3 :  \texorpdfstring{\protect\eears}{}    $1^+1^-2^+2^-3^+3^-$}
This curve is immersed in a five-punctured sphere. The boundary components are $m_1, m_2, m_3, s_1, s_2$, where $m_1, m_2$ and $m_3$ are monogons, and $s_1$ is a triangle, and $s_2$ is a hexagon.  We do not give explicit computations in this case. Note that this case is related to the previous one by a Reidemeister III move (cf. \cite{PalmerWoodward}).

\subsection*{n=3 : $1^+2^+3^+1^-2^-3^-$}
This curve is immersed in a three-punctured torus. It was studied in \cite{LekiliTevelev}. The boundary components are $s_1, s_2, s_3$, where $s_1$ is a bigon, $s_2$ is a rectangle, and $s_3$ is a hexagon. The gradings are determined by $|s_1| = 2 - |w_1| - |w_3|$, $|s_2| = 2 - |w_1| + 2|w_2| - |w_3|$, and $|s_3| = 2+ 2|w_1| -2 |w_2| +  2|w_3|$.
\begin{center}
  \begin{tikzpicture}[xscale=0.9, yscale=0.9]

    \tikzset{->-/.style={decoration={markings, mark=at position #1 with {\arrow{>}}},postaction={decorate}}}

    \node[] (t1) at (0,0) {};
    \node[] (t2) at (0,4) {};
    \node[] (t3) at (10,4) {};
    \node[] (t4) at (10,0) {};

    \node[] (L0left) at (0,1.4) {};
    \node[] (L0up)  at (6,4) {};
    \node[] (L0down) at (6, 0) {};
    \node[] (L0right) at (10,0.4) {};

    \node[] (L1left) at (0,3.4) {};
    \node[] (L1right) at (10,1.4) {};

    \node[] (L2left) at (0,0.4) {};
    \node[] (L2right) at (10,3.4) {};

    \node[] (L3left) at (0,2.4) {};
    \node[] (L3right) at (10,2.4) {};

    \node[] (L1up)  at (3,4) {};
    \node[] (L1down) at (3, 0) {};
           
    \draw[gray!90, thick] (t1.center) -- (t2.center) -- (t3.center) -- (t4.center) -- (t1.center);

    \draw[thick, dashed] (0.7,0.8) circle (0.2cm);
    \node (s1) at (0.7,0.8) {$\scriptstyle s_1$};

    \draw[thick, dashed] (3.7,1.8) circle (0.2cm);
    \node (s2) at (3.7,1.8) {$\scriptstyle s_2$};

    \draw[thick, dashed] (5.7,2.5) circle (0.2cm);
    \node (s3) at (5.7,2.5) {$\scriptstyle s_3$};


    \draw[thick] (L2right.center) .. controls +(180:1) and +(330:1) .. (L0up.center);
    \draw[thick] (L3right.center) .. controls +(180:1) and +(330:1) .. (L1up.center);

    \draw[thick] (L1left.center) .. controls +(0:1) and +(180:1) .. (L0right.center);

    \draw[thick] (L0left.center) .. controls +(0:1) and +(150:1) .. (L1down.center) ;
    \draw[thick] (L3left.center) .. controls +(0:1) and +(150:1) .. (L0down.center) ;
   
    \draw[thick, ->-=.1] (L2left.center) .. controls +(0:1) and +(180:1) .. (L1right.center);
    
    \fill[black] (7.9,1.22) circle (2pt);    
    \fill[black] (8.7,1.31) circle (2pt);    

    \node[] (q) at (7.9,1.45) {\small $\bar{e}$};
    \node[] (e) at (8.7,1.52) {\small $e$};
    
    \node[] (x1) at (3.5,0.95) {\small $w_2$}; 
    \node[] (y1) at (1.5,0.7) {\small $w_1$}; 
    \node[] at (6.5,1.24) {\small $w_3$}; 

  \end{tikzpicture}
\end{center}

\begin{multicols}{3}\small
\begin{itemize}[label={}]
\item $\mathfrak{m}_3(w_{1},w_{1},\bar{w}_{1})= - w_{1}$
\item $\mathfrak{m}_3(w_{1},w_{2},\bar{w}_{2})= - w_{1}$
\item $\mathfrak{m}_3(w_{1},w_{3},\bar{w}_{3})= - w_{1}$
\item $\mathfrak{m}_3(w_{1},\bar{w}_{1},w_{2})= - w_{2}$
\item $\mathfrak{m}_3(w_{1},\bar{w}_{1},w_{3})= - w_{3}$
\item $\mathfrak{m}_3(w_{1},\bar{w}_{1},\bar{w}_{0})=\bar{w}_{0}$
\item $\mathfrak{m}_3(w_{1},\bar{w}_{1},\bar{w}_{1})=\bar{w}_{1}$
\item $\mathfrak{m}_3(w_{1},\bar{w}_{1},\bar{w}_{2})=\bar{w}_{2}$
\item $\mathfrak{m}_3(w_{1},\bar{w}_{1},\bar{w}_{3})=\bar{w}_{3}$
\item $\mathfrak{m}_3(w_{2},w_{1},\bar{w}_{1})= - w_{2}$
\item $\mathfrak{m}_3(w_{2},w_{2},\bar{w}_{2})= - w_{2}$
\item $\mathfrak{m}_3(w_{2},w_{3},\bar{w}_{3})= - w_{2}$
\item $\mathfrak{m}_3(w_{2},\bar{w}_{1},w_{1})=w_{2}$
\item $\mathfrak{m}_3(w_{2},\bar{w}_{2},w_{3})= - w_{3}$
\item $\mathfrak{m}_3(w_{2},\bar{w}_{2},\bar{w}_{0})=\bar{w}_{0}$
\item $\mathfrak{m}_3(w_{2},\bar{w}_{2},\bar{w}_{1})=\bar{w}_{1}$
\item $\mathfrak{m}_3(w_{2},\bar{w}_{2},\bar{w}_{2})=\bar{w}_{2}$
\item $\mathfrak{m}_3(w_{2},\bar{w}_{2},\bar{w}_{3})=\bar{w}_{3}$
\item $\mathfrak{m}_3(w_{3},w_{1},\bar{w}_{1})= - w_{3}$
\item $\mathfrak{m}_3(w_{3},w_{2},\bar{w}_{2})= - w_{3}$
\item $\mathfrak{m}_3(w_{3},w_{3},\bar{w}_{3})= - w_{3}$
\item $\mathfrak{m}_3(w_{3},\bar{w}_{1},w_{1})=w_{3}$
\item $\mathfrak{m}_3(w_{3},\bar{w}_{2},w_{2})=w_{3}$
\item $\mathfrak{m}_3(w_{3},\bar{w}_{3},\bar{w}_{0})=\bar{w}_{0}$
\item $\mathfrak{m}_3(w_{3},\bar{w}_{3},\bar{w}_{1})=\bar{w}_{1}$
\item $\mathfrak{m}_3(w_{3},\bar{w}_{3},\bar{w}_{2})=\bar{w}_{2}$
\item $\mathfrak{m}_3(w_{3},\bar{w}_{3},\bar{w}_{3})=\bar{w}_{3}$
\item $\mathfrak{m}_3(\bar{w}_{1},w_{1},\bar{w}_{0})= - \bar{w}_{0}$
\item $\mathfrak{m}_3(\bar{w}_{1},w_{1},\bar{w}_{1})= - \bar{w}_{1}$
\item $\mathfrak{m}_3(\bar{w}_{1},w_{1},\bar{w}_{2})= - \bar{w}_{2}$
\item $\mathfrak{m}_3(\bar{w}_{1},w_{1},\bar{w}_{3})= - \bar{w}_{3}$
\item $\mathfrak{m}_3(\bar{w}_{2},w_{1},\bar{w}_{1})= - \bar{w}_{2}$
\item $\mathfrak{m}_3(\bar{w}_{2},w_{2},\bar{w}_{0})= - \bar{w}_{0}$
\item $\mathfrak{m}_3(\bar{w}_{2},w_{2},\bar{w}_{2})= - \bar{w}_{2}$
\item $\mathfrak{m}_3(\bar{w}_{2},w_{2},\bar{w}_{3})= - \bar{w}_{3}$
\item $\mathfrak{m}_3(\bar{w}_{3},w_{1},\bar{w}_{1})= - \bar{w}_{3}$
\item $\mathfrak{m}_3(\bar{w}_{3},w_{2},\bar{w}_{2})= - \bar{w}_{3}$
\item $\mathfrak{m}_3(\bar{w}_{3},w_{3},\bar{w}_{0})= - \bar{w}_{0}$
\item $\mathfrak{m}_3(\bar{w}_{3},w_{3},\bar{w}_{3})= - \bar{w}_{3}$
\end{itemize}
\end{multicols}

We let $\mathfrak{b} = t_1 \bar{w}_1 + t_2 \bar{w}_2 + t_3 \bar{w}_3$. Then, $\mathfrak{m}_1^{\mathfrak{b}}$ is given as follows:
\begin{align*}
\mathfrak{m}_1^{\mathfrak{b}}(w_{1}) &=  - t_{1} t_{2} \bar{w}_{2} - t_{1} t_{3} \bar{w}_{3} \\
\mathfrak{m}_1^{\mathfrak{b}}(w_2) &= t_{1} t_{2} \bar{w}_{1} -  t_{2} t_{3} \bar{w}_{3} \\
\mathfrak{m}_1^{\mathfrak{b}}(w_3) &=  t_{1} t_{3} \bar{w}_{1} +  t_{2} t_{3} \bar{w}_{2}            
\end{align*}
Letting $(x_1,x_2,x_3)= (w_1,w_2,w_3)$. The deformed algebra over $k[t_1,t_2,t_3]/(t_1t_2,t_1t_3,t_2t_3)$ is
\[ 
\boxed{k\{x_1,x_2,x_3\}\Big/\left(
\begin{aligned}
& x_1^2+t_1x_1,\; x_2^2+ t_2x_2,\; x_3^2+t_3x_3,\\
& x_1x_2+t_2x_1+t_1x_2,\; x_2x_1,\; x_2x_3+t_2x_3 +t_3x_2,\\
& x_3x_2,\; x_1x_3+t_3x_1+t_1x_3,\; x_3x_1
\end{aligned}
\right)}
\]
We give the isomorphism types of the algebras occurring in this family in the following table.
\[
\begin{array}{c|c}
\hline 
 t_2=t_3 =0, t_1 \neq 0
  &\bullet \rightrightarrows \bullet \\[6pt]
  t_1= t_3=0 , t_2 \neq 0
  &\bullet \leftrightarrows \bullet \\[6pt]
  t_1= t_2=0 , t_3 \neq 0
  &\bullet \rightrightarrows \bullet  \\[6pt]
  (t_1,t_2,t_3) = (0,0,0)
  & k[x,y,z]/(x,y,z)^2 \\[6pt]  \hline
\end{array}
\]
Turning on the $s_1$ and $s_2$ deformations, we additionally get the following contributions:
\begin{align*}
  &\mathfrak{m}_1(w_1) = -s_1 \bar{w}_3,\quad \mathfrak{m}_1(w_3) = s_1 \bar{w}_1 \\
  &\mathfrak{m}_2(w_1,w_3) = s_1w_0, \quad \mathfrak{m}_2(w_3,\bar{w}_0) =- s_1\bar{w}_1, \quad \mathfrak{m}_2(\bar{w}_0,\bar{w}_1) =s_1\bar{w}_3 \\
  & \mathfrak{m}_3(w_1,w_3,\bar{w}_0) = - s_1 w_0
\end{align*}
\begin{align*}
  \mathfrak{m}_3(w_3, \bar{w}_2,w_1) &= s_2 w_2 \\
  \mathfrak{m}_3(\bar{w}_2,w_1,\bar{w}_2) &= -s_2 \bar{w}_3 \\
  \mathfrak{m}_3(w_1, \bar{w}_2,w_3) &= - s_2 w_2 \\
  \mathfrak{m}_3(\bar{w}_2,w_3,\bar{w}_2) &= s_2\bar{w}_1 
\end{align*}
We get an algebra over $k[t_1,t_2,t_3] / (t_1t_2,t_2t_3, t_1t_3 + s_2t_2^2 + s_1)$ given by 
\[ 
\boxed{k\{x_1,x_2,x_3\}\Big/\left(
\begin{aligned}
& x_1^2+t_1x_1,\; x_2^2+ t_2x_2,\; x_3^2+t_3x_3,\\
& x_1x_2+t_2x_1+t_1x_2,\; x_2x_1,\; x_2x_3+t_2x_3 +t_3x_2,\\
& x_3x_2,\; x_1x_3+t_3x_1+t_1x_3 - s_2 t_2x_2 +s_1 ,\; x_3x_1 - s_2 t_2 x_2 
\end{aligned}
\right)}
\]
The base has two irreducible components: Over the component $t_1=t_3=0$, $s_1 + s_2t_2^2 = 0$ we get a deformation to the matrix algebra $M_2(k)$, and over the component $t_2=0$, $s_1 +  t_1t_3= 0$, we get a deformation to $\bullet \rightrightarrows\bullet$. As both of these algebras are rigid, turning on $s_3$ will not lead to any new algebras. Moreover, there are infinitely many polygons to worry about if we also turn on $s_3$.

\begin{rmk} The algebra $\bullet \rightrightarrows\bullet$ arises as the endomorphism algebra of the exceptional collection $\langle \mathcal{O},\mathcal{O}(1)\rangle$ on $\mathbb{P}^1$. The mirror to this is well known and corresponds to thimbles of the Lefschetz fibration $\mathbb{C}^* \to \mathbb{C}$, $z \to z+\frac{1}{z}$ given by a double branched covering at $\{\pm 1\}$, hence this algebra can be realized as the endomorphism algebra of two arcs in a partially wrapped Fukaya category of the cylinder. One can stabilize this to the Lefschetz fibration $\mathbb{C}^* \times \mathbb{C} \to \mathbb{C}$ given by $(z,w) \to z + \frac{1}{z} + w^2$, and the restriction of the thimbles of this fibration to the fiber, which is a punctured torus, is closely related to our construction of the algebra $\bullet \rightrightarrows \bullet$ above.  This remark also gives a hint about a geometric meaning of what it means to retain only the degree 0 part, $\mathrm{End}(L)$, of an immersed curve $L$ on a surface $\Sigma$. Namely, one should imagine the algebra $\mathrm{End}(L)$ as the (full) endomorphism algebra of a (possibly immersed) Lagrangian on a 4-dimensional symplectic manifold equipped with a Lefschetz fibration whose fiber is $\Sigma$ such that $L$ is the restriction of that Lagrangian to a fiber of this Lefschetz fibration. This was the point of view exploited in \cite{LekiliTevelev} for a class of immersed curves on a punctured torus.
\end{rmk}
\subsection*{n=3 : $1^+2^+3^-3^+1^-2^-$}
This is a curve immersed in a three-punctured torus. The boundary components are $m_1$, $s_1$, $s_2$, where $m_1$ is a monogon, $s_1$ is a square and $s_2$ is a 7-gon. The grading is determined by $|s_1| = 4 - |w_1|-|w_2| -2|w_3|$, $|s_2| = 2 + |w_1|+|w_2| +|w_3|$, and $|m_1|= |w_3|$.
We choose to use the gradings such that $|w_1|=|w_2|=0$ and $|w_3|=1$. In this way, $s_1$ can be compactified in the usual way and $m_1$ can also be compactified with a $\mathbb{Z}/2\mathbb{Z}$-orbifold point.
\begin{center}
\begin{tikzpicture}[xscale=0.9, yscale=0.9]
    \tikzset{->-/.style={decoration={markings, mark=at position #1 with {\arrow{>}}},postaction={decorate}}}

    \node[] (t1) at (0,0) {};
    \node[] (t2) at (0,4) {};
    \node[] (t3) at (10,4) {};
    \node[] (t4) at (10,0) {};

    \node[] (L0left) at (0,1.9) {};
    \node[] (L0up)  at (6,4) {};
    \node[] (L0down) at (6, 0) {};
    \node[] (L0right) at (10,0.4) {};

    \node[] (L1left) at (0,3.3) {};
    \node[] (L1right) at (10,1.9) {};

    \node[] (L2left) at (0,0.3) {};
    \node[] (L2right) at (10,3.3) {};

   \node[] (Lmid) at (1.6,0.4) {};

    \draw[gray!90, thick] (t1.center) -- (t2.center) -- (t3.center) -- (t4.center) -- (t1.center);

   \draw[thick, dashed] (2.5,2) circle (0.3cm);
    \node (s2) at (2.5,2) {$s_2$};
    \draw[thick, dashed] (9,1.1) circle (0.3cm);
    \node (s1) at (9,1.1) {$s_1$};
  
    \node (m1) at (1.55,0.8) {\scriptsize $m_1$};

    \draw[thick] (L2right.center) .. controls +(180:1) and +(330:1) .. (L0up.center);
    \draw[thick, ->-=.1] (L1left.center) .. controls +(0:1) and +(180:1) .. (L0right.center);

    \draw[thick] (L0left.center) .. controls +(0:1) and +(150:1) .. (L0down.center) ;
   
    \draw[thick] (L2left.center) .. controls +(0:1) and +(200:1) .. (Lmid.center);

    \draw[thick] (Lmid.center) .. controls (1.9,0.5) and (1.9,1.1) .. (1.5,1.2)
                               .. controls (1.1,1.1) and (1.1,0.5) .. (Lmid.center);

    \draw[thick] (Lmid.center) .. controls +(340:1) and +(180:1) .. (L1right.center);
    
    \fill[black] (7.9,1.5) circle (2pt);    
    \fill[black] (8.7,1.66) circle (2pt);    

    \node[] (q) at (7.9,1.75) {\small $\bar{e}$};
    \node[] (e) at (8.7,1.85) {\small $e$};
    
    \node[] (w1) at (1,0.5) {\small $w_3$}; 
    \node[] (w2) at (5.9,1.3) {\small $w_2$}; 
    \node[] (w3) at (3.4,0.8) {\small $w_1$}; 
  \end{tikzpicture}
\end{center}
With these choices, the product and the triple products are determined as follows.
\begin{align*} \mathfrak{m}_2(w_i,w_0) &= w_i = (-1)^{|w_i|} \mathfrak{m}_2(w_0,w_i)  \text{ for }i=0,1,2,3\\
\mathfrak{m}_2(\bar{w}_i,w_0) &= \bar{w}_i = (-1)^{|\bar{w}_i|} \mathfrak{m}_2(w_0,\bar{w}_i) \text{ for } i=0,1,2,3\\
\mathfrak{m}_2(\bar{w}_1,w_1) &= \bar{w}_0 = -\mathfrak{m}_2(w_1,\bar{w}_1)\\
\mathfrak{m}_2(\bar{w}_2,w_2) &= \bar{w}_0 = -\mathfrak{m}_2(w_2,\bar{w}_2)\\ 
\mathfrak{m}_2(\bar{w}_3,w_3) &= \bar{w}_0 = \mathfrak{m}_2(w_3,\bar{w}_3)
\end{align*}
\begin{multicols}{3}\small
  \begin{itemize}[label={}]
    \item $\mathfrak{m}_3(w_{1},w_{1},\bar{w}_{1})= - w_{1}$
\item $\mathfrak{m}_3(w_{1},w_{2},\bar{w}_{2})= - w_{1}$
\item $\mathfrak{m}_3(w_{1},w_{3},\bar{w}_{3})=w_{1}$
\item $\mathfrak{m}_3(w_{1},\bar{w}_{1},w_{2})= - w_{2}$
\item $\mathfrak{m}_3(w_{1},\bar{w}_{1},w_{3})=w_{3}$
\item $\mathfrak{m}_3(w_{1},\bar{w}_{1},\bar{w}_{0})=\bar{w}_{0}$
\item $\mathfrak{m}_3(w_{1},\bar{w}_{1},\bar{w}_{1})=\bar{w}_{1}$
\item $\mathfrak{m}_3(w_{1},\bar{w}_{1},\bar{w}_{2})=\bar{w}_{2}$
\item $\mathfrak{m}_3(w_{1},\bar{w}_{1},\bar{w}_{3})= - \bar{w}_{3}$
\item $\mathfrak{m}_3(w_{1},\bar{w}_{3},w_{3})=w_{1}$
\item $\mathfrak{m}_3(w_{2},w_{1},\bar{w}_{1})= - w_{2}$
\item $\mathfrak{m}_3(w_{2},w_{2},\bar{w}_{2})= - w_{2}$
\item $\mathfrak{m}_3(w_{2},w_{3},\bar{w}_{3})=w_{2}$
\item $\mathfrak{m}_3(w_{2},\bar{w}_{1},w_{1})=w_{2}$
\item $\mathfrak{m}_3(w_{2},\bar{w}_{2},w_{3})=w_{3}$
\item $\mathfrak{m}_3(w_{2},\bar{w}_{2},\bar{w}_{0})=\bar{w}_{0}$
\item $\mathfrak{m}_3(w_{2},\bar{w}_{2},\bar{w}_{1})=\bar{w}_{1}$
\item $\mathfrak{m}_3(w_{2},\bar{w}_{2},\bar{w}_{2})=\bar{w}_{2}$
\item $\mathfrak{m}_3(w_{2},\bar{w}_{2},\bar{w}_{3})= - \bar{w}_{3}$
\item $\mathfrak{m}_3(w_{2},\bar{w}_{3},w_{3})=w_{2}$
\item $\mathfrak{m}_3(w_{3},w_{1},\bar{w}_{1})= - w_{3}$
\item $\mathfrak{m}_3(w_{3},w_{2},\bar{w}_{2})= - w_{3}$
\item $\mathfrak{m}_3(w_{3},\bar{w}_{3},\bar{w}_{0})= - \bar{w}_{0}$
\item $\mathfrak{m}_3(w_{3},\bar{w}_{3},\bar{w}_{1})= - \bar{w}_{1}$
\item $\mathfrak{m}_3(w_{3},\bar{w}_{3},\bar{w}_{2})= - \bar{w}_{2}$
\item $\mathfrak{m}_3(\bar{w}_{1},w_{1},\bar{w}_{0})= - \bar{w}_{0}$
\item $\mathfrak{m}_3(\bar{w}_{1},w_{1},\bar{w}_{1})= - \bar{w}_{1}$
\item $\mathfrak{m}_3(\bar{w}_{1},w_{1},\bar{w}_{2})= - \bar{w}_{2}$
\item $\mathfrak{m}_3(\bar{w}_{2},w_{1},\bar{w}_{1})= - \bar{w}_{2}$
\item $\mathfrak{m}_3(\bar{w}_{2},w_{2},\bar{w}_{0})= - \bar{w}_{0}$
\item $\mathfrak{m}_3(\bar{w}_{2},w_{2},\bar{w}_{2})= - \bar{w}_{2}$
\item $\mathfrak{m}_3(\bar{w}_{3},w_{1},\bar{w}_{1})= - \bar{w}_{3}$
\item $\mathfrak{m}_3(\bar{w}_{3},w_{2},\bar{w}_{2})= - \bar{w}_{3}$
\item $\mathfrak{m}_3(\bar{w}_{3},w_{3},w_{3})= - w_{3}$
\item $\mathfrak{m}_3(\bar{w}_{3},w_{3},\bar{w}_{0})= - \bar{w}_{0}$
\item $\mathfrak{m}_3(\bar{w}_{3},w_{3},\bar{w}_{1})= - \bar{w}_{1}$
\item $\mathfrak{m}_3(\bar{w}_{3},w_{3},\bar{w}_{2})= - \bar{w}_{2}$
\item $\mathfrak{m}_3(\bar{w}_{3},w_{3},\bar{w}_{3})=\bar{w}_{3}$
\item $\mathfrak{m}_3(\bar{w}_{3},\bar{w}_{3},w_{3})=\bar{w}_{3}$
\end{itemize}
\end{multicols}
Turning on the $s_1$ parameter, we get the following contributions from the rectangle:
\begin{multicols}{2}\small
\begin{itemize}[label={}]
\item $\mathfrak{m}_3(w_3,w_3,w_2) = - s_1 \bar{w}_1$
\item $\mathfrak{m}_3(w_3,w_2,w_1) = - s_1 \bar{w}_3$
\item $\mathfrak{m}_3(w_{2},w_{1},w_{3}) = s_1 \bar{w}_{3}$
\item $\mathfrak{m}_3(w_{1},w_{3},w_{3}) = s_1 \bar{w}_{2}$
\item $\mathfrak{m}_4(w_{3},w_{3},w_{2},\bar{w}_{0}) = s_1 \bar{w}_{1}$
\item $\mathfrak{m}_4(w_{3},w_{2},\bar{w}_{0},w_{1}) = s_1 \bar{w}_{3}$
\item $\mathfrak{m}_4(w_{2},\bar{w}_{0},w_{1},w_{3}) = -s_1\bar{w}_{3}$
\item $\mathfrak{m}_4(\bar{w}_{0},w_{1},w_{3},w_{3}) = -s_1 \bar{w}_{2}$
\item $\mathfrak{m}_4(w_{1},w_{3},w_{3},w_{2}) = -s_1 w_{0}$
\item $\mathfrak{m}_5(w_{1},w_{3},w_{3},w_{2},\bar{w}_{0}) = s_1w_{0}$
\end{itemize}
\end{multicols}
Next, turning on $m_1$, we get the following contributions from (i) double covering of the $m_1$ monogon, (ii) the bigon that covers $s_1$ and double covers $m_1$.
\begin{multicols}{3}\small
  \begin{itemize}[label={}]
\item $\mathfrak{m}_3(\bar{w}_{3}) = -m_1 w_3$      
\item $\mathfrak{m}_1(w_{1}) = s_1m_1\bar{w}_{2}$
\item $\mathfrak{m}_1(w_{2}) = -s_1m_1 \bar{w}_{1}$
\item $\mathfrak{m}_2(w_{1}, w_{2}) = - s_1m_1w_{0}$
\item $\mathfrak{m}_2(w_{2}, \bar{w}_{0}) = s_1m_1 \bar{w}_{1}$
\item $\mathfrak{m}_2(\bar{w}_{0}, w_{1}) = - s_1m_1\bar{w}_{2}$
\item $\mathfrak{m}_3(w_{1}, w_{2}, \bar{w}_{0}) = s_1m_1 w_{0}$
\end{itemize}
\end{multicols}

  We now let $\mathfrak{b} = t_1 \bar{w}_1 + t_2 \bar{w}_2 + t_3 w_3 + \lambda \bar{w}_0$, and calculate:
\begin{align*}
\mathfrak{m}_1^{\mathfrak{b}}(w_{1}) &=  (- t_{1} t_{2}+ s_1(m_1+t_3^2)(1-\lambda)) \bar{w}_{2}  \\
\mathfrak{m}_1^{\mathfrak{b}}(w_2) &= (t_{1} t_{2} - s_1(m_1+t_3^2)(1-\lambda))\bar{w}_{1} \\
\mathfrak{m}_1^{\mathfrak{b}}(\bar{w}_3) &= 2t_3(1-\lambda)\bar{w}_0  -2 t_{1} t_{3} \bar{w}_{1} - 2 t_2t_3 \bar{w}_2 - (m_1+t_3^2) w_3            
\end{align*}
We let $(x_1,x_2,x_3) = (w_1,w_2,\bar{w}_3)$. Then, the deformed algebra over $k[t_1,t_2,t_3] / (t_1t_2, t_1t_3,t_2t_3)$ is
\[ 
\boxed{k\{x_1,x_2,x_3\}\Big/\left(
\begin{aligned}
& x_1^2+t_1x_1,\; x_2^2+ t_2x_2,\; x_3^2-2t_3x_3,\\
& x_1x_2+t_2x_1+t_1x_2+t_1t_2,\; x_2x_1,\; x_2x_3-2t_3x_2+t_2x_3,\\
& x_3x_2 + t_2 x_3,\; x_1x_3-2t_3x_1 + t_1x_3 ,\; x_3x_1 + t_1 x_3 
\end{aligned}
\right)}
\]
We give the isomorphism types of the algebras occurring in this family. 
\[
\begin{array}{c|c}
  \hline
 t_1=t_2=t_3= 0 &    k[x,y,z]/(x,y,z)^2  \\
 t_2=t_3=0, t_1 \neq 0 & {\scriptsize\begin{tikzcd} \bullet \arrow[r] & \bullet \arrow[distance=1.5em,out=330,in=30,loop,swap] \end{tikzcd}} \\[6pt]
 t_1=t_3=0, t_2\neq 0 & {\scriptsize \begin{tikzcd}
    \bullet \arrow[distance=1.5em, out=150,in=210,loop,swap] \arrow[r] & \bullet
    \end{tikzcd}}    \\[6pt]
  t_3\neq 0, t_1=t_2=0 &{\scriptsize\begin{tikzcd}[column sep=large] \bullet \rightrightarrows \bullet \end{tikzcd} }   \\[6pt]
\end{array}
\]
  \subsection*{n=3 : $1^+2^+1^-3^+3^-2^-$}
This curve is immersed in a three-punctured torus. The boundary components are a monogon $m_1$ and $s_1,s_2$, where $s_1$ is a bigon and $s_2$ is a $9$-gon. The gradings are determined by $|s_1| = 2- |w_1|-|w_2|, |s_2|=  4 + |w_1| + |w_2| - |w_3|$,  and $|m_1| = |w_3|$.
\begin{center}
  \begin{tikzpicture}[xscale=0.9, yscale=0.9]

    \tikzset{->-/.style={decoration={markings, mark=at position #1 with {\arrow{>}}},postaction={decorate}}}

    \node[] (t1) at (0,0) {};
    \node[] (t2) at (0,4) {};
    \node[] (t3) at (10,4) {};
    \node[] (t4) at (10,0) {};

    \node[] (L0left) at (0,1.9) {};
    \node[] (L0up)  at (6,4) {};
    \node[] (L0down) at (6, 0) {};
    \node[] (L0right) at (10,0.4) {};

    \node[] (L1left) at (0,3.3) {};
    \node[] (L1right) at (10,1.9) {};

    \node[] (L2left) at (0,0.3) {};
    \node[] (L2right) at (10,3.3) {};

   \node[] (Lmid) at (6,1.1) {};

    \draw[gray!90, thick] (t1.center) -- (t2.center) -- (t3.center) -- (t4.center) -- (t1.center);

   \draw[thick, dashed] (2.5,2) circle (0.3cm);
    \node (s2) at (2.5,2) {$s_2$};
    \draw[thick, dashed] (1,1.1) circle (0.3cm);
    \node (s1) at (1,1.1) {$s_1$};

    \node (m1) at (6.3,0.75) {\scriptsize $m_1$};
    \draw[thick] (L2right.center) .. controls +(180:1) and +(330:1) .. (L0up.center);
    \draw[thick, ->-=.1] (L1left.center) .. controls +(0:1) and +(180:1) .. (L0right.center);

    \draw[thick] (L0left.center) .. controls +(0:1) and +(150:1) .. (L0down.center) ;
   
    \draw[thick] (L2left.center) .. controls +(0:1) and +(160:1) .. (Lmid.center);

    \draw[thick] (Lmid.center) .. controls (7.3,0.8) and (7,0.5) .. (6.4,0.5)
                               .. controls (5.8,0.5) and (5.5,0.7) .. (Lmid.center);

    \draw[thick, ->-=.1] (Lmid.center) .. controls +(40:1) and +(180:1) .. (L1right.center);
    
    \fill[black] (7.9,1.73) circle (2pt);    
    \fill[black] (8.7,1.83) circle (2pt);    

    \node[] (q) at (7.9,1.95) {\small $\bar{e}$};
    \node[] (e) at (8.7,2.05) {\small $e$};
    
    \node[] (w1) at (5.5,1) {\small $w_3$}; 
    \node[] (w2) at (7.2,1.4) {\small $w_2$}; 
    \node[] (w3) at (3,1) {\small $w_1$}; 
  \end{tikzpicture}
\end{center}
\begin{multicols}{3}\small
\begin{itemize}[label={}]
\item $\mathfrak{m}_3(w_{1},w_{1},\bar{w}_{1})= - w_{1}$
\item $\mathfrak{m}_3(w_{1},w_{2},\bar{w}_{2})= - w_{1}$
\item $\mathfrak{m}_3(w_{1},\bar{w}_{1},w_{2})= - w_{2}$
\item $\mathfrak{m}_3(w_{1},\bar{w}_{1},w_{3})= - w_{3}$
\item $\mathfrak{m}_3(w_{1},\bar{w}_{1},\bar{w}_{0})=\bar{w}_{0}$
\item $\mathfrak{m}_3(w_{1},\bar{w}_{1},\bar{w}_{1})=\bar{w}_{1}$
\item $\mathfrak{m}_3(w_{1},\bar{w}_{1},\bar{w}_{2})=\bar{w}_{2}$
\item $\mathfrak{m}_3(w_{1},\bar{w}_{1},\bar{w}_{3})=\bar{w}_{3}$
\item $\mathfrak{m}_3(w_{2},w_{1},\bar{w}_{1})= - w_{2}$
\item $\mathfrak{m}_3(w_{2},w_{2},\bar{w}_{2})= - w_{2}$
\item $\mathfrak{m}_3(w_{2},w_{3},\bar{w}_{3})= - w_{2}$
\item $\mathfrak{m}_3(w_{2},\bar{w}_{1},w_{1})=w_{2}$
\item $\mathfrak{m}_3(w_{2},\bar{w}_{2},w_{3})= - w_{3}$
\item $\mathfrak{m}_3(w_{2},\bar{w}_{2},\bar{w}_{0})=\bar{w}_{0}$
\item $\mathfrak{m}_3(w_{2},\bar{w}_{2},\bar{w}_{1})=\bar{w}_{1}$
\item $\mathfrak{m}_3(w_{2},\bar{w}_{2},\bar{w}_{2})=\bar{w}_{2}$
\item $\mathfrak{m}_3(w_{2},\bar{w}_{2},\bar{w}_{3})=\bar{w}_{3}$
\item $\mathfrak{m}_3(w_{2},\bar{w}_{3},w_{3})=w_{2}$
\item $\mathfrak{m}_3(w_{3},w_{1},\bar{w}_{1})= - w_{3}$
\item $\mathfrak{m}_3(w_{3},w_{2},\bar{w}_{2})= - w_{3}$
\item $\mathfrak{m}_3(w_{3},w_{3},\bar{w}_{3})= - w_{3}$
\item $\mathfrak{m}_3(w_{3},\bar{w}_{1},w_{1})=w_{3}$
\item $\mathfrak{m}_3(w_{3},\bar{w}_{3},\bar{w}_{0})=\bar{w}_{0}$
\item $\mathfrak{m}_3(w_{3},\bar{w}_{3},\bar{w}_{2})=\bar{w}_{2}$
\item $\mathfrak{m}_3(w_{3},\bar{w}_{3},\bar{w}_{3})=\bar{w}_{3}$
\item $\mathfrak{m}_3(\bar{w}_{1},w_{1},w_{3})=w_{3}$
\item $\mathfrak{m}_3(\bar{w}_{1},w_{1},\bar{w}_{0})= - \bar{w}_{0}$
\item $\mathfrak{m}_3(\bar{w}_{1},w_{1},\bar{w}_{1})= - \bar{w}_{1}$
\item $\mathfrak{m}_3(\bar{w}_{1},w_{1},\bar{w}_{2})= - \bar{w}_{2}$
\item $\mathfrak{m}_3(\bar{w}_{1},w_{1},\bar{w}_{3})= - \bar{w}_{3}$
\item $\mathfrak{m}_3(\bar{w}_{2},w_{1},\bar{w}_{1})= - \bar{w}_{2}$
\item $\mathfrak{m}_3(\bar{w}_{2},w_{2},\bar{w}_{0})= - \bar{w}_{0}$
\item $\mathfrak{m}_3(\bar{w}_{2},w_{2},\bar{w}_{2})= - \bar{w}_{2}$
\item $\mathfrak{m}_3(\bar{w}_{3},w_{1},\bar{w}_{1})= - \bar{w}_{3}$
\item $\mathfrak{m}_3(\bar{w}_{3},w_{2},\bar{w}_{2})= - \bar{w}_{3}$
\item $\mathfrak{m}_3(\bar{w}_{3},w_{3},\bar{w}_{0})= - \bar{w}_{0}$
\item $\mathfrak{m}_3(\bar{w}_{3},w_{3},\bar{w}_{2})= - \bar{w}_{2}$
\item $\mathfrak{m}_3(\bar{w}_{3},w_{3},\bar{w}_{3})= - \bar{w}_{3}$
\item $\mathfrak{m}_3(\bar{w}_{3},\bar{w}_{1},w_{1})=\bar{w}_{3}$
\end{itemize}
\end{multicols}
Turning on the $s_1$ deformations, we additionally get the following contributions:
\begin{align*}
  &\mathfrak{m}_1(w_1) = -s_1 \bar{w}_2,\quad \mathfrak{m}_1(w_2) = s_1 \bar{w}_1 \\
  &\mathfrak{m}_2(w_1,w_2) = s_1w_0, \quad \mathfrak{m}_2(w_2,\bar{w}_0) =- s_1\bar{w}_1, \quad \mathfrak{m}_2(\bar{w}_0,\bar{w}_1) =s_1\bar{w}_3 \\
  & \mathfrak{m}_3(w_1,w_2,\bar{w}_0) = - s_1 w_0
\end{align*}
Let $\mathfrak{b} = t_1 \bar{w}_1 + t_2 \bar{w}_2 + t_3 \bar{w}_3$. Then, we have
\begin{align*}
\mathfrak{m}_1^{\mathfrak{b}}(w_{1}) &=  - (t_{1} t_{2} + s_1)\bar{w}_{2}  \\
\mathfrak{m}_1^{\mathfrak{b}}(w_2) &= (t_{1} t_{2} + s_1) \bar{w}_{1}  \\
\mathfrak{m}_1^{\mathfrak{b}}(w_3) &=  0.            
\end{align*}
Therefore, we get a flat deformation when $t_1t_2 + s_1= 0$. Writing $(x_1,x_2,x_3) = (w_1,w_2,w_3)$. The deformed algebra over $k[t_1,t_2,t_3]$ is given by :
\[ 
\boxed{k\{x_1,x_2,x_3\}\Big/\left(
\begin{aligned}
& x_1^2+t_1x_1,\; x_2^2+ t_2x_2,\; x_3^2+t_3x_3,\\
& x_1x_2+t_2x_1+t_1x_2 + t_1t_2,\; x_2x_1,\; x_2x_3+t_2x_3 ,\\
& x_3x_2 + t_2 x_3 ,\; x_1x_3 ,\; x_3x_1 
\end{aligned}
\right)}
\]
\[
\begin{array}{c|c}
  \hline
 t_1,t_2,t_3 \neq 0   &   k \times {\scriptstyle(\bullet \to \bullet)}  \\
 t_3 =0 , t_1,t_2\neq0   &   {\scriptsize\begin{tikzcd} \bullet \arrow[r] & \bullet \arrow[distance=1.5em,out=330,in=30,loop,swap] \end{tikzcd}} \\
 t_3\neq 0, t_1=t_2=0 & k \times k[x,y]/(x,y)^2 \\
t_1=t_2=t_3= 0 & k[x,y,z]/(x,y,z)^2 \\ \hline
\end{array}
\]
\subsection*{n=3 : $1^+2^+3^+3^-1^-2^-$}
This is a curve immersed in a three-punctured torus. The boundary components are $m_1$, $s_1$, $s_2$, where $m_1$ is a monogon, $s_1$ is a triangle and $s_2$ is an $8$-gon. The grading is determined by $|s_1|= 2-|w_1|-|w_2|+|w_3|$, $|s_2|=4+|w_1|+|w_2|-2|w_3|$ and $|m_1|= |w_3|$.  We do not give explicit computations in this case as this is related to the next case by a Reidemeister III move. 
\subsection*{n=3 : $1^+2^+3^-1^-3^+2^-$}
This is a curve immersed in a three-punctured torus. The boundary components are $s_1$, $s_2$, $s_3$, where $s_1$ is a bigon, $s_2$ is a triangle and $s_3$ is a 7-gon. The gradings are determined by $|s_1| = |w_2|+|w_3|$, $|s_2|= 2+|w_1|- |w_2|-|w_3|$, and $|s_3|= 4-|w_1|$.
\begin{center}
  \begin{tikzpicture}[xscale=0.5, yscale=0.4]
        \tikzset{->-/.style={decoration={markings, mark=at position #1 with {\arrow{>}}},postaction={decorate}}}
\draw[gray, thick] 

   (-5.5,  0) 
   -- (-5.5, -2.5)
   arc (180:360:5.5 and 4.5)
   -- (5.5, -2.5)
   -- (5.5, 0)

   arc (0:180:4.0 and 4.2) 

   -- (-2.5, -1.8)
   -- (-4, -1.8)
   -- (-4, 0)

   arc (180:0:2.5 and 2.5) 

   -- (1, -1.8)
   -- (-1, -1.8)
   -- (-1, 0)

   arc (180:0:2.5 and 2.8) 

   -- (4, -1.8)
   -- (2.5, -1.8)
   -- (2.5, 0)

   arc (0:180:4.0 and 4.0) 
   -- cycle;

\draw[thick, rounded corners=3pt] 
  (1.75, 0)[->-=0.4]
  arc (0:180:3.25 and 3.25)
  (-4.75,0) -- (-4.75, -1) -- (-4.75, -2)
  arc (180:360:4.75 and 3)  (4.75, -2) -- (4.75, 0)
  arc (0:180: 3.25 and 3.5) (-1.75,0) -- (-1.75,-2) -- (1.5, -3.5) -- (1.5, -4) 
  arc(360:180: 1.5 and 2.5) -- (-1.5, -4) -- (-1.5, -3.5) -- (1.75, -2) -- (1.75,0);

\begin{scope}
  \clip (-7, 1.0) rectangle (7, 4.0);
  
  \draw[white, line width=6pt] 
    (2.5, 0) arc (0:180:4.0 and 4.0);
  \draw[gray, thick, rounded corners=0] 
    (2.5, 0) arc (0:180:4.0 and 4.0);
    
  \draw[white, line width=6pt] 
    (-4, 0) arc (180:0:2.5 and 2.5);
  \draw[gray, thick, rounded corners=0] 
    (-4, 0) arc (180:0:2.5 and 2.5);

   \draw[white, line width=6pt] 
     (1.75, 0) arc (0:180:3.25 and 3.25);
   \draw[thick, rounded corners=0] 
     (1.75, 0) arc (0:180:3.25 and 3.25);
\end{scope}

\draw[thick, dashed] (0,-5.7) circle (0.4cm);
    \node (s1) at (0,-5.7) {{\scriptsize $s_1$}};
    \draw[thick, dashed] (0,-4) circle (0.4cm);
    \node (s2) at (0,-4) {{\scriptsize $s_2$}};
  \draw[thick, dashed] (-3,-3) circle (0.4cm);
    \node (s3) at (-3,-3) {{\scriptsize $s_3$}};

    \node[] (w1) at (0.9,-2.9) {\scriptsize $w_1$}; 
    \node[] (w2) at (1.9,-5.1) {\scriptsize $w_2$}; 
    \node[] (w3) at (-1.9,-5.1) {\scriptsize $w_3$}; 

    \fill[black] (4.53,1.4) circle (2pt);    
    \fill[black] (4.75,0.5) circle (2pt);    

    \node[] (q) at (5,0.6) {\scriptsize $\bar{e}$};
    \node[] (e) at (4.8,1.6) {\scriptsize $e$};
\end{tikzpicture}
\end{center}

\begin{multicols}{3}\small
\begin{itemize}[label={}]
\item $\mathfrak{m}_3(w_{1},w_{1},\bar{w}_{1})= - w_{1}$
\item $\mathfrak{m}_3(w_{1},w_{2},\bar{w}_{2})= - w_{1}$
\item $\mathfrak{m}_3(w_{1},\bar{w}_{1},w_{2})= - w_{2}$
\item $\mathfrak{m}_3(w_{1},\bar{w}_{1},w_{3})= - w_{3}$
\item $\mathfrak{m}_3(w_{1},\bar{w}_{1},\bar{w}_{0})=\bar{w}_{0}$
\item $\mathfrak{m}_3(w_{1},\bar{w}_{1},\bar{w}_{1})=\bar{w}_{1}$
\item $\mathfrak{m}_3(w_{1},\bar{w}_{1},\bar{w}_{2})=\bar{w}_{2}$
\item $\mathfrak{m}_3(w_{1},\bar{w}_{1},\bar{w}_{3})=\bar{w}_{3}$
\item $\mathfrak{m}_3(w_{1},\bar{w}_{3},w_{3})=w_{1}$
\item $\mathfrak{m}_3(w_{2},w_{1},\bar{w}_{1})= - w_{2}$
\item $\mathfrak{m}_3(w_{2},w_{2},\bar{w}_{2})= - w_{2}$
\item $\mathfrak{m}_3(w_{2},w_{3},\bar{w}_{3})= - w_{2}$
\item $\mathfrak{m}_3(w_{2},\bar{w}_{1},w_{1})=w_{2}$
\item $\mathfrak{m}_3(w_{2},\bar{w}_{2},w_{3})= - w_{3}$
\item $\mathfrak{m}_3(w_{2},\bar{w}_{2},\bar{w}_{0})=\bar{w}_{0}$
\item $\mathfrak{m}_3(w_{2},\bar{w}_{2},\bar{w}_{1})=\bar{w}_{1}$
\item $\mathfrak{m}_3(w_{2},\bar{w}_{2},\bar{w}_{2})=\bar{w}_{2}$
\item $\mathfrak{m}_3(w_{2},\bar{w}_{2},\bar{w}_{3})=\bar{w}_{3}$
\item $\mathfrak{m}_3(w_{2},\bar{w}_{3},w_{3})=w_{2}$
\item $\mathfrak{m}_3(w_{3},w_{1},\bar{w}_{1})= - w_{3}$
\item $\mathfrak{m}_3(w_{3},w_{2},\bar{w}_{2})= - w_{3}$
\item $\mathfrak{m}_3(w_{3},\bar{w}_{3},\bar{w}_{0})=\bar{w}_{0}$
\item $\mathfrak{m}_3(w_{3},\bar{w}_{3},\bar{w}_{2})=\bar{w}_{2}$
\item $\mathfrak{m}_3(\bar{w}_{1},w_{1},w_{3})=w_{3}$
\item $\mathfrak{m}_3(\bar{w}_{1},w_{1},\bar{w}_{0})= - \bar{w}_{0}$
\item $\mathfrak{m}_3(\bar{w}_{1},w_{1},\bar{w}_{1})= - \bar{w}_{1}$
\item $\mathfrak{m}_3(\bar{w}_{1},w_{1},\bar{w}_{2})= - \bar{w}_{2}$
\item $\mathfrak{m}_3(\bar{w}_{2},w_{1},\bar{w}_{1})= - \bar{w}_{2}$
\item $\mathfrak{m}_3(\bar{w}_{2},w_{2},\bar{w}_{0})= - \bar{w}_{0}$
\item $\mathfrak{m}_3(\bar{w}_{2},w_{2},\bar{w}_{2})= - \bar{w}_{2}$
\item $\mathfrak{m}_3(\bar{w}_{3},w_{1},\bar{w}_{1})= - \bar{w}_{3}$
\item $\mathfrak{m}_3(\bar{w}_{3},w_{2},\bar{w}_{2})= - \bar{w}_{3}$
\item $\mathfrak{m}_3(\bar{w}_{3},w_{3},w_{3})=w_{3}$
\item $\mathfrak{m}_3(\bar{w}_{3},w_{3},\bar{w}_{0})= - \bar{w}_{0}$
\item $\mathfrak{m}_3(\bar{w}_{3},w_{3},\bar{w}_{1})= - \bar{w}_{1}$
\item $\mathfrak{m}_3(\bar{w}_{3},w_{3},\bar{w}_{2})= - \bar{w}_{2}$
\item $\mathfrak{m}_3(\bar{w}_{3},w_{3},\bar{w}_{3})= - \bar{w}_{3}$
\item $\mathfrak{m}_3(\bar{w}_{3},\bar{w}_{1},w_{1})=\bar{w}_{3}$
\item $\mathfrak{m}_3(\bar{w}_{3},\bar{w}_{3},w_{3})=\bar{w}_{3}$
\end{itemize}
\end{multicols}
We let $\mathfrak{b} = t_1 \bar{w}_1 + t_2 \bar{w}_2 + t_3 \bar{w}_3$.
Turning on the $s_2$-deformation, we additionally get
\[ \mathfrak{m}_2(w_2,w_3) = s_2w_1, \quad \mathfrak{m}_2(w_3,\bar{w}_1) =- s_2\bar{w}_2, \quad \mathfrak{m}_2(\bar{w}_1,w_2) =s_2\bar{w}_3 \]
 Then, $\mathfrak{m}_1^{\mathfrak{b},s_2}$ is given as follows:
\begin{align*}
\mathfrak{m}_1^{\mathfrak{b}}(w_{1}) &=  - t_{1} t_{2} \bar{w}_{2} + t_{1} t_{3} \bar{w}_{3} \\
\mathfrak{m}_1^{\mathfrak{b}}(w_2) &= t_{1} t_{2} \bar{w}_{1}  + s_2 t_1 \bar{w}_3 \\
\mathfrak{m}_1^{\mathfrak{b}}(w_3) &=  -t_{1} t_{3} \bar{w}_{1} - s_2 t_1 \bar{w}_2            
\end{align*}
The deformed algebra over $k[t_1,t_2,t_3,s_2] / (t_1t_2, t_1t_3,t_1s_2)$ is
\[\boxed{
k\{x_1,x_2,x_3\}\Big/\left(
\begin{aligned}
& x_1^2+t_1x_1,\; x_2^2+ t_2x_2,\; x_3^2-t_3x_3,\\
& x_1x_2+t_2x_1+t_1x_2,\; x_2x_1,\; x_2x_3+t_2x_3-s_2x_1,\\
& x_3x_2 + t_2 x_3,\; x_1x_3-t_3x_1 ,\; x_3x_1 + t_1 x_3 
\end{aligned}
\right)}
\]
We give the isomorphism types of the algebras occurring in this family. 
\[
\begin{array}{c|c}
  \hline
 t_1= 0, t_2, t_3 \neq 0 &    k \times {\scriptstyle(\bullet \to \bullet)}  \\
 t_1=t_2=0, t_3 \neq 0 & {\scriptsize\begin{tikzcd} \bullet \arrow[r] & \bullet \arrow[distance=1.5em,out=330,in=30,loop,swap] \end{tikzcd}} \\[6pt]
 t_1=t_3=0, t_2\neq 0 & {\scriptsize \begin{tikzcd}
    \bullet \arrow[distance=1.5em, out=150,in=210,loop,swap] \arrow[r] & \bullet
    \end{tikzcd}}    \\[6pt]
  t_1\neq 0, t_2=t_3=s_2=0 &{\scriptsize\begin{tikzcd}[column sep=large] \bullet \rightrightarrows \bullet \end{tikzcd} }   \\[6pt]
  t_1=t_2=t_3=0, s_2 \neq 0 & k\{x,y\} / (x^2,xy,y^2)\\[6pt]
 t_1=t_2=t_3=s_2=0 & k[x,y,z]/(x,y,z)^2 \\[6pt]  
  \hline
\end{array}
\]
\subsection*{n=3 : $1^+2^+3^+1^-3^-2^-$}
This curve is immersed in a genus 2 curve with one puncture. The boundary component $s$ is a 12-gon.

\begin{multicols}{3}\small
\begin{itemize}[label={}]
\item $\mathfrak{m}_3(w_{1},w_{1},\bar{w}_{1})= - w_{1}$
\item $\mathfrak{m}_3(w_{1},w_{2},\bar{w}_{2})= - w_{1}$
\item $\mathfrak{m}_3(w_{1},w_{3},\bar{w}_{3})= - w_{1}$
\item $\mathfrak{m}_3(w_{1},\bar{w}_{1},w_{2})= - w_{2}$
\item $\mathfrak{m}_3(w_{1},\bar{w}_{1},w_{3})= - w_{3}$
\item $\mathfrak{m}_3(w_{1},\bar{w}_{1},\bar{w}_{0})=\bar{w}_{0}$
\item $\mathfrak{m}_3(w_{1},\bar{w}_{1},\bar{w}_{1})=\bar{w}_{1}$
\item $\mathfrak{m}_3(w_{1},\bar{w}_{1},\bar{w}_{2})=\bar{w}_{2}$
\item $\mathfrak{m}_3(w_{1},\bar{w}_{1},\bar{w}_{3})=\bar{w}_{3}$
\item $\mathfrak{m}_3(w_{2},w_{1},\bar{w}_{1})= - w_{2}$
\item $\mathfrak{m}_3(w_{2},w_{2},\bar{w}_{2})= - w_{2}$
\item $\mathfrak{m}_3(w_{2},w_{3},\bar{w}_{3})= - w_{2}$
\item $\mathfrak{m}_3(w_{2},\bar{w}_{1},w_{1})=w_{2}$
\item $\mathfrak{m}_3(w_{2},\bar{w}_{2},w_{3})= - w_{3}$
\item $\mathfrak{m}_3(w_{2},\bar{w}_{2},\bar{w}_{0})=\bar{w}_{0}$
\item $\mathfrak{m}_3(w_{2},\bar{w}_{2},\bar{w}_{1})=\bar{w}_{1}$
\item $\mathfrak{m}_3(w_{2},\bar{w}_{2},\bar{w}_{2})=\bar{w}_{2}$
\item $\mathfrak{m}_3(w_{2},\bar{w}_{2},\bar{w}_{3})=\bar{w}_{3}$
\item $\mathfrak{m}_3(w_{2},\bar{w}_{3},w_{3})=w_{2}$
\item $\mathfrak{m}_3(w_{3},w_{1},\bar{w}_{1})= - w_{3}$
\item $\mathfrak{m}_3(w_{3},w_{2},\bar{w}_{2})= - w_{3}$
\item $\mathfrak{m}_3(w_{3},w_{3},\bar{w}_{3})= - w_{3}$
\item $\mathfrak{m}_3(w_{3},\bar{w}_{1},w_{1})=w_{3}$
\item $\mathfrak{m}_3(w_{3},\bar{w}_{3},\bar{w}_{0})=\bar{w}_{0}$
\item $\mathfrak{m}_3(w_{3},\bar{w}_{3},\bar{w}_{1})=\bar{w}_{1}$
\item $\mathfrak{m}_3(w_{3},\bar{w}_{3},\bar{w}_{2})=\bar{w}_{2}$
\item $\mathfrak{m}_3(w_{3},\bar{w}_{3},\bar{w}_{3})=\bar{w}_{3}$
\item $\mathfrak{m}_3(\bar{w}_{1},w_{1},\bar{w}_{0})= - \bar{w}_{0}$
\item $\mathfrak{m}_3(\bar{w}_{1},w_{1},\bar{w}_{1})= - \bar{w}_{1}$
\item $\mathfrak{m}_3(\bar{w}_{1},w_{1},\bar{w}_{2})= - \bar{w}_{2}$
\item $\mathfrak{m}_3(\bar{w}_{1},w_{1},\bar{w}_{3})= - \bar{w}_{3}$
\item $\mathfrak{m}_3(\bar{w}_{2},w_{1},\bar{w}_{1})= - \bar{w}_{2}$
\item $\mathfrak{m}_3(\bar{w}_{2},w_{2},\bar{w}_{0})= - \bar{w}_{0}$
\item $\mathfrak{m}_3(\bar{w}_{2},w_{2},\bar{w}_{2})= - \bar{w}_{2}$
\item $\mathfrak{m}_3(\bar{w}_{3},w_{1},\bar{w}_{1})= - \bar{w}_{3}$
\item $\mathfrak{m}_3(\bar{w}_{3},w_{2},\bar{w}_{2})= - \bar{w}_{3}$
\item $\mathfrak{m}_3(\bar{w}_{3},w_{3},\bar{w}_{0})= - \bar{w}_{0}$
\item $\mathfrak{m}_3(\bar{w}_{3},w_{3},\bar{w}_{2})= - \bar{w}_{2}$
\item $\mathfrak{m}_3(\bar{w}_{3},w_{3},\bar{w}_{3})= - \bar{w}_{3}$
\end{itemize}
\end{multicols}
We let $\mathfrak{b} = t_1 \bar{w}_1 + t_2 \bar{w}_2 + t_3 \bar{w}_3$. Then, $\mathfrak{m}_1^{\mathfrak{b}}$ is given as follows:
\begin{align*}
\mathfrak{m}_1^{\mathfrak{b}}(w_{1}) &=  - t_{1} t_{2} \bar{w}_{2} - t_{1} t_{3} \bar{w}_{3} \\
\mathfrak{m}_1^{\mathfrak{b}}(w_2) &= t_{1} t_{2} \bar{w}_{1}  \\
\mathfrak{m}_1^{\mathfrak{b}}(w_3) &=  t_{1} t_{3} \bar{w}_{1}             
\end{align*}
The deformed algebra over $k[t_1,t_2,t_3] / (t_1t_2, t_1t_3)$ is
\[ 
\boxed{k\{x_1,x_2,x_3\}\Big/\left(
\begin{aligned}
& x_1^2+t_1x_1,\; x_2^2+ t_2x_2,\; x_3^2+t_3x_3,\\
& x_1x_2+t_2x_1+t_1x_2,\; x_2x_1,\; x_2x_3+t_2x_3,\\
& x_3x_2 + t_2 x_3,\; x_1x_3+t_3x_1+t_1x_3 ,\; x_3x_1 
\end{aligned}
\right)}
\]
We give the isomorphism types of the algebras occurring in this family. 
\[
\begin{array}{c|c}
  \hline
  t_1 = 0, t_2,t_3 \neq 0  &    k \times {\scriptstyle(\bullet \to \bullet)}  \\
  t_3 \neq 0, t_1=t_2 = 0 
  & {\scriptsize\begin{tikzcd} \bullet \arrow[r] & \bullet \arrow[distance=1.5em,out=330,in=30,loop,swap] \end{tikzcd}} \\[6pt]
  t_2\neq 0, t_1=t_3=0
  & {\scriptsize \begin{tikzcd}
    \bullet \arrow[distance=1.5em, out=150,in=210,loop,swap] \arrow[r] & \bullet
    \end{tikzcd}}    \\[6pt]
  t_1 \neq 0, t_2=t_3 = 0
  &{\scriptsize\begin{tikzcd}[column sep=large] \bullet \rightrightarrows \bullet \end{tikzcd} }   \\[6pt]
  (t_1,t_2,t_3) = (0,0,0) 
  &k[x,y,z]/(x,y,z)^2 \\[6pt]  
  \hline
\end{array}
\]

\section{Some variations and further comments}

One can easily extend the combinatorial description given here to immersions whose domains are disjoint unions of circles and intervals. In the latter case, one works with the (partially) wrapped Fukaya category.

In order to realize a rank $r$ algebra, one can try to take an immersion with $r'$ self-intersection points with $r < r'$, but choose the grading so that exactly $r-1$ of the self-intersections give generators in degrees $\{0,1\}$.

It is also possible to work with $\mathbb{Z}/2\mathbb{Z}$-graded Fukaya categories. Then, many more compactifications (hence, deformations) become available.   

We used a perturbation scheme in order to determine finite descriptions of the $A_\infty$ algebras; however, our perturbation breaks the cyclic symmetry. For various reasons, it is desirable to find cyclic $A_\infty$ models (cf. \cite{aybe}). 

In this paper, we worked mainly over an algebraically closed field $k$ of characteristic zero. In fact, all the endomorphism algebras that we computed are actually defined over $\mathbb{Z}$. In particular, one can ask which low rank algebras are realized over non-closed fields. For instance, we have seen that all the quaternion algebras over $\mathbb{Q}$ are realized.

We remark that the irreducible components of the base of the families of algebras that we constructed are all affine spaces (in particular, they are all smooth). 

An obvious next goal is to study immersions with 4 and 5 nodes. The corresponding classification of isomorphism types of algebras forming the points of $\Alg_5$ and $\Alg_6$ is known. It has taken us substantial effort to work through all the immersions with 3 or fewer self-intersections, but part of the difficulty was to figure out a combinatorial model for the $A_\infty$ structure, and now that's settled. Thus, glossing over the problem of enumerating all the possible (finitely many!) polygons contributing to the higher products in a given immersion, we have a finite and combinatorial problem at hand.

\vspace{\fill} 
\noindent
\begin{tabular}{@{}l}
  \textit{Department of Mathematics,} \\
  \textit{Imperial College,} \\
  \textit{London, UK}
\end{tabular}

\end{document}